\newcommand{\Max}[1]{\@latex@warning{Remaining comment: #1}\textcolor{orange}{#1}}
\numberwithin{equation}{section}
\def\d{\mathrm{d}}
\def\eps{\epsilon}
\def\pmu{\delta_\nu}
\def\pme{\delta_\eta}
\def\pmmu{\delta_\mu}
\def\om{\omega}
\def\Om{\Omega}
\def\Omc{\Omega^c}
\def\be{\begin{equation}}
\def\ee{\end{equation}}
\def\Pd{\P^\delta}
\def\Ed{\E^\delta}
\def\fm{\mathfrak{m}}
\def\fs{\mathfrak{s}}
\def\bea{\begin{align}}
\def\eea{\end{align}}
\def\bea*{\begin{align*}}
\def\eea*{\end{align*}}
\def\cref#1{Figure~\ref{#1}}
\def\sA{{\mathcal A}}
\def\sC{{\mathcal C}}
\def\cG{{\mathcal G}}
\def\cI{{\mathcal I}}
\def\cJ{{\mathcal J}}
\def\cL{{\mathcal L}}
\def\cS{{\mathcal S}}
\def\cT{{\mathcal T}}
\def\E{\mathbb{E}}
\def\F{\mathbb{F}}
\def\P{\mathbb{P}}
\def\R{\mathbb{R}}
\def\T{\mathbb{T}}
\def\sA{{\mathscr{A}}}
\def\sB{{\mathscr{B}}}
\def\sC{{\mathscr{C}}}
\def\sD{{\mathscr{D}}}
\def\sE{{\mathscr{E}}}
\def\sF{{\mathscr{F}}}
\def\sG{{\mathscr{G}}}
\def\sL{{\mathscr{L}}}
\def\sM{{\mathscr{M}}}
\def\sN{{\mathscr{N}}}
\def\sP{{\mathscr{P}}}
\def\sW{{\mathscr{W}}}
\def\sX{{\mathscr{X}}}
\def\sY{{\mathscr{Y}}}
\def\bN{\sN}
\def\bW{\sW}
\newtheorem{Def}{Definition}[section]
\newtheorem{Thm}[Def]{Theorem}
\newtheorem{Cor}[Def]{Corollary}
\newtheorem{Rmk}[Def]{Remark}
\newtheorem{Lem}[Def]{Lemma}
\newtheorem{Prop}[Def]{Proposition}
\newtheorem{Asm}[Def]{Assumption}
\newtheorem*{Thm*}{Theorem}
\newcommand{\Prob}{\mathbb{P}}
\renewcommand{\d}{\mathrm{d}}
\newcommand{\wt}{\widetilde}
\numberwithin{equation}{section}
\title{Optimal Control and Potential Games in the Mean Field\footnote{The authors would like to thank Professors Dylan Possama\"i and Ludovic Tangpi for insightful
comments.  Research
partially supported by the National Science Foundation grant
 DMS 2406762. }}
\author{Felix H\"{o}fer\footnote{
Department of Operations Research and Financial
Engineering, Princeton University, Princeton, NJ, 08540, USA, email: 
{\tt fhoefer@princeton.edu}. }
\and H. Mete Soner\footnote{
Department of Operations Research and Financial
Engineering, Princeton University, Princeton, NJ, 08540, USA, email: 
{\tt soner@princeton.edu}. }}
\date{\today}
\begin{document}

\maketitle

\begin{abstract}
\noindent
We study a mean field optimal control problem with general non-Markovian dynamics, including both common noise and jumps. We show that its minimizers are Nash equilibria of an associated mean field game of controls. 
These types of games are necessarily potential, and the Nash equilibria derived as the minimizers of the control problem are closely connected to McKean-Vlasov equations of Langevin type.
To illustrate the general theory, we present several examples, including a mean field game of controls with interactions through a price variable, and mean field Cucker-Smale Flocking and Kuramoto models. We also establish the invariance property of the value function, a key ingredient used in our proofs.
\end{abstract}

\section{Introduction}

Mean field games (MFGs) were independently introduced by Lasry \& Lions \cite{lasry_jeux_2006,lasry_jeux_2006-1, lasry_mean_2007} and Huang, Caines, \& Malhamé \cite{huang_individual_2003,huang_invariance_2007, huang_large-population_2007, huang_nash_2007} as a framework to approximate large population games in which the interaction appears through the empirical distribution of agents. Since their introduction, they have been widely applied in contexts ranging from heterogeneous agent models in economics to models of jet-lag recovery in neuroscience.

In this work, we focus on a particular class of MFGs that emerge as the first-order optimality condition of a \emph{mean field control problem} (MFC). This connection was first identified in the seminal paper \cite{lasry_mean_2007} by Lasry \& Lions and has since been extensively studied under the name \emph{potential mean field games} as we review below. 
Our primary goal is to establish a general connection between potential MFG and MFC problems. In addition, we identify a close link to a class of Wasserstein gradient flows. The latter connection can be leveraged to construct compelling mean field games that have similar qualitative behavior as the gradient flows. In Section \ref{sec:examples}, we discuss two such examples: the MFG flocking model of \cite{nourian2010synthesis,nourian2011mean} 
and the Kuramoto MFG of \cite{carmona2023synchronization,YMMS,yin_synchronization_2010}.

While most existing studies on potential games, e.g.\ \cite{briani2018stable}, start from a MFG and establish structural conditions on the cost functions
under which equilibria can be obtained from a MFC problem,  we start from a general MFC problem and construct an associated potential MFG. 
By adopting a probabilistic formulation for both the control and game problems, we obtain general results that accommodate \emph{non-Markovian} dynamics under \emph{jumps} and \emph{common noise}. In the case of \emph{separable} cost structures, our framework recovers existing results, while in the \emph{non-separable} case, it leads to \emph{mean field games of controls}.

\subsection{MFC, MFG and Main Results}

\noindent
On a filtered probability space $(\Omega,\sF,\F,\P)$ 
satisfying the usual conditions,
let $W^0$ and $W$ be $\F$-Brownian motions,
and $N$ and $N^0$ be stationary $\F$-Poisson random measures on 
$[0,\infty)\times (\R^k\setminus\{0\})$. In our setting,
$W$ and $N$ represent the idiosyncratic noise,
while $W^0$ and $N^0$  model the common noise. We write $\wt N$ and $\wt N^0$ for the corresponding compensated random measures.
Let $T>0$ be the time horizon. For a given initial condition $X_0$, 
and coefficients $(b, \sigma, \sigma^0, \lambda, \lambda^0)$
satisfying Assumption \ref{asm:state} below,
the {\emph{controlled state process}} $X^\alpha \in \R^n$ 
is the unique strong solution to the following
stochastic differential equation,
\begin{equation}
\label{eq.state}
\begin{split}
X^{\alpha}_t = &X_0 + \int_0^t b(s,X^\alpha_{\cdot\land s},\alpha_s)\,\d s + \int_0^t \left(\sigma(s,X^\alpha_{\cdot\land s},\alpha_s)\,\d W_s 
+ \sigma^0(s,X^\alpha_{\cdot\land s},\alpha_s)\, \d W^0_s \right)\\
&   +  \int_0^t\int_{\R^k\setminus\{0\}} 
\left(  \lambda(s,X^\alpha_{\cdot\land s-},\alpha_s,\zeta)\, \wt N(\d s,\d \zeta)  + \lambda^0(s,X^\alpha_{\cdot\land s-},\alpha_s,\zeta)\, \
\wt N^0(\d s,\d \zeta)\right),
\end{split}
\end{equation}
where $\alpha_\cdot$ is  any $\F$-predictable,
$\d\P\otimes \d t$-square integrable \emph{control} process taking values 
in a Borel subset $U$ of some Euclidean space. 
Let $\sA$ denote the set of all such control processes.
Notice that the above dynamics do not contain any mean field interactions.
We refer to Section \ref{ssec:law-dep} for a class of control problems in which some of the dynamics are allowed to be of mean field type.

Consider the cost functionals,
$$
F : [0,T] \times \sD_n \times U \times \sP(\sD_n\times U) \to \R,
\quad
G : \sD_n \times \sP(\sD_n) \to \R,
$$
where $\sD_n$ denotes the Skorokhod space of c\`adl\`ag functions from
$[0,T]$ to $\R^n$, and for any measurable space $S$,
$\sP(S)$ is the set of
probability measures on $S$. We set\footnote{We note that the terminal cost $G$ depends on $\cL^\alpha_T$ only through its first marginal.}
\begin{equation}
\label{eq:MFC_cost1}
J(X_0,\alpha) := \E \left[\int_0^T 
F(t, X^{\alpha}_{\cdot\wedge t},\alpha_t,\cL^\alpha_t)\,\d t 
+ G(X^{\alpha}_{\cdot\land T}, \cL^\alpha_T)\ \right],
\end{equation}
where $\cL^\alpha_t:= \cL((X^{\alpha}_{\cdot\land t}, \alpha_t) \,|\, W^0,N^0)$
is the regular conditional distribution of $(X^{\alpha}_{\cdot\land t}, \alpha_t)$
given the common noise $(W^0,N^0)$.
Then, the \emph{mean field control} (MFC) problem consists of minimizing $J(X_0,\alpha)$
over all $\alpha\in\sA$.

We next introduce the associated {\emph{mean field game of controls}.
Let $\Pi: \sD_n\times U\to \sD_n $ be the projection on the first coordinate, and for  
$(t,x,a,\nu)\in[0,T]\times \sD_n\times U\times
\sP( \sD_n\times U)$, let $\mu =\Pi_\sharp\nu$ and set
\begin{equation} 
    \label{eq:pot-game-cost}
    \begin{split}
 	f(t,x,a,\nu) &:= \int_{\sD_n\times U}  \pmu 
        F (t,\tilde x,\tilde{a}, \nu)(x,a)\,\nu(\d \tilde x,\d \tilde{a}) 
        + F(t,x,a,\nu),\\
        g(x, \nu) &:= \int_{\sD_n} 
        \pmmu G(\tilde x, \mu)(x)\, \mu(\d \tilde x)  
         + G(x,\mu).
    \end{split}
\end{equation} 
Given an adapted stochastic process $\nu^*=(\nu^*_t)_{t \in [0,T]}$ with values in 
$\sP(\sD_n \times U)$, we define the \emph{game cost} of a typical agent by,
\begin{equation}
\label{eq:MFG-cost}
J_g(X_0,\alpha, \nu^*) := 
\E\left[\int_0^T f(t, X^{\alpha}_{\cdot\wedge t}, \alpha_t,\nu^*_t)\,\d t 
+ g(X^{\alpha}_{\cdot\land T}, \nu^*_T )\right].
\end{equation}
Then, a \emph{Nash equilibrium} of the associated MFG  of controls
is a pair of an admissible control process $\alpha^*\in \sA$
and a process $\nu^*=(\nu^*_t)_{t \in [0,T]}$ with values in 
$\sP(\sD_n \times U)$ which satisfy,
\be
\label{eq:pot_game}
J_g(X_0,\alpha^*, \nu^*)  =   \inf_{\alpha\in\sA}\,\, J_g(X_0,\alpha, \nu^*)
\ \ \text{and}\ \
\nu^*_t 
=\cL((X^{\alpha^*}_{\cdot\land t}, \alpha^*_t) \,|\, W^0,N^0), \ \  \forall \ t\in[0,T].
\ee

The main result of the paper connects the MFC and MFG problems.
\begin{Thm*}
    Under natural regularity and growth
    conditions on the coefficients,  
    for any minimizer $\alpha^*$ of the MFC problem \eqref{eq:MFC_cost1},  the pair
     $(\alpha^*,\nu^*)$ with 
     $\nu^*_t:=\cL((X^{\alpha^*}_{\cdot\land t}, \alpha^*_t) \,|\, W^0,N^0)$ 
     for $t\in[0,T]$,
    is a Nash equilibrium of the MFG of controls \eqref{eq:pot_game}.
\end{Thm*}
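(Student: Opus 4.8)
The plan is to exploit that the running and terminal game costs $f$ and $g$ in \eqref{eq:pot-game-cost} are exactly the linear functional derivatives, in the measure argument, of the ``potential'' functionals $\nu\mapsto\int_{\sD_n\times U}F(t,\cdot,\cdot,\nu)\,\d\nu$ and $\mu\mapsto\int_{\sD_n}G(\cdot,\mu)\,\d\mu$, and to combine this with a mixing-of-controls perturbation together with the invariance of the MFC value function. First, since $\cL^\alpha_\cdot$ and $\nu^*_\cdot$ are $\sigma(W^0,N^0)$-measurable, conditioning on the common noise inside \eqref{eq:MFC_cost1} and \eqref{eq:MFG-cost} and using the tower property yields, writing $\mu^\alpha_t:=\Pi_\sharp\cL^\alpha_t$,
\[
J(X_0,\alpha)=\E\Big[\int_0^T\!\!\int_{\sD_n\times U}F(t,x,a,\cL^\alpha_t)\,\cL^\alpha_t(\d x,\d a)\,\d t+\int_{\sD_n}G(x,\mu^\alpha_T)\,\mu^\alpha_T(\d x)\Big]=:\cJ\big((\cL^\alpha_t)_{t\in[0,T]}\big),
\]
\[
J_g(X_0,\alpha,\nu^*)=\E\Big[\int_0^T\!\!\int_{\sD_n\times U}f(t,x,a,\nu^*_t)\,\cL^\alpha_t(\d x,\d a)\,\d t+\int_{\sD_n}g(x,\nu^*_T)\,\mu^\alpha_T(\d x)\Big].
\]
Thus $\cJ$ depends on the control only through the flow of conditional laws, and $J_g(X_0,\cdot,\nu^*)$ is affine in that flow; in particular, since $\cL^{\alpha^*}_\cdot=\nu^*_\cdot$, the second identity at $\alpha=\alpha^*$ represents $J_g(X_0,\alpha^*,\nu^*)$ with $\cL^{\alpha^*}_t=\nu^*_t$ in the integrals.

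Next, fix $\alpha\in\sA$ and $\epsilon\in[0,1]$, and enlarge the stochastic basis by a Bernoulli$(\epsilon)$ variable $\theta$ independent of $\sF$, declared $\F_0$-measurable in the enlarged (completed) filtration; then $W,W^0$ remain Brownian motions and $N,N^0$ stationary Poisson random measures for the enlarged filtration, and $\alpha^*,\alpha$ stay admissible. Set $\alpha^\epsilon_t:=\theta\,\alpha_t+(1-\theta)\,\alpha^*_t$, i.e.\ $\alpha_t$ on $\{\theta=1\}$ and $\alpha^*_t$ on $\{\theta=0\}$, which is an admissible control. By locality of strong solutions of \eqref{eq.state} across the $\F_0$-measurable partition $\{\theta=1\}\cup\{\theta=0\}$ together with pathwise uniqueness, $X^{\alpha^\epsilon}=\theta X^\alpha+(1-\theta)X^{\alpha^*}$; and since $\theta\indep(W^0,N^0,X^\alpha,\alpha,X^{\alpha^*},\alpha^*)$ one obtains $\cL^{\alpha^\epsilon}_t=(1-\epsilon)\,\nu^*_t+\epsilon\,\cL^\alpha_t$ for every $t\in[0,T]$.

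Now invoke the invariance of the MFC value function under enlargement of the stochastic basis: $\alpha^*$ remains a minimizer of the MFC problem on the enlarged space, so by the first display
\[
h(\epsilon):=J(X_0,\alpha^\epsilon)=\cJ\big((1-\epsilon)\nu^*_\cdot+\epsilon\,\cL^\alpha_\cdot\big)\ \ge\ J(X_0,\alpha^*)=h(0),\qquad\epsilon\in[0,1].
\]
Under the standing regularity and growth assumptions on $F$, $G$, $\pmu F$, $\pmmu G$, the map $h$ is differentiable on $[0,1]$, hence $h'(0_+)\ge 0$. Differentiating $\cJ$ under the expectation and the time integral (dominated convergence), applying the chain rule and the defining property of the linear functional derivatives, and symmetrizing the resulting double integrals by Fubini, the derivative of the integrand at $\epsilon=0$ reproduces precisely $f(t,\cdot,\cdot,\nu^*_t)$ and $g(\cdot,\nu^*_T)$ tested against $\cL^\alpha_t-\nu^*_t$ and $\mu^\alpha_T-\mu^{\alpha^*}_T$, so that
\[
0\le h'(0_+)=\E\Big[\int_0^T\!\!\int f(t,x,a,\nu^*_t)\,(\cL^\alpha_t-\nu^*_t)(\d x,\d a)\,\d t+\int g(x,\nu^*_T)\,(\mu^\alpha_T-\mu^{\alpha^*}_T)(\d x)\Big]=J_g(X_0,\alpha,\nu^*)-J_g(X_0,\alpha^*,\nu^*),
\]
the last equality using the second display of the first paragraph. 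Hence $J_g(X_0,\alpha^*,\nu^*)\le J_g(X_0,\alpha,\nu^*)$ for all $\alpha\in\sA$; and since $\nu^*_t=\cL((X^{\alpha^*}_{\cdot\wedge t},\alpha^*_t)\,|\,W^0,N^0)$ holds by the very definition of $\nu^*$ (with adaptedness following from adaptedness of the state and control and independence of the future common-noise increments), the pair $(\alpha^*,\nu^*)$ satisfies \eqref{eq:pot_game}.

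The main obstacle lies in the two middle steps: justifying the pasting of strong solutions of the non-Markovian, jump-driven SDE \eqref{eq.state} along an $\F_0$-measurable partition, and---crucially---the appeal to the invariance of the MFC value under enlargement of the stochastic basis, which is exactly the separately established invariance result; without it the key inequality $h(\epsilon)\ge h(0)$ is unavailable, since the mixed flow $(1-\epsilon)\nu^*_\cdot+\epsilon\,\cL^\alpha_\cdot$ need not be the conditional-law flow of any control admissible on the original space. The differentiation of $h$ and the identification of $h'(0_+)$ with the game-cost difference are then routine given the assumed regularity of $F$, $G$ and their linear functional derivatives.
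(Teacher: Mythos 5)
Your proposal follows the same strategy as the paper's proof of Theorem \ref{thm:pot_game_finite}: enlarge the probability basis by an independent Bernoulli flag, use it to mix the candidate control $\alpha$ with the optimizer $\alpha^*$, invoke the invariance of the MFC value to conclude $J(X_0,\alpha^\epsilon)\ge J(X_0,\alpha^*)$ on the enlarged basis, note that the conditional law of the mixture is the corresponding convex combination $(1-\epsilon)\nu^*_\cdot+\epsilon\,\cL^\alpha_\cdot$ (the paper's Lemma \ref{lem:additive}), and differentiate at $\epsilon=0$ to read off $J_g(X_0,\alpha,\nu^*)-J_g(X_0,\alpha^*,\nu^*)\ge 0$ (the paper's Lemma \ref{lem:pot_game_finite}). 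Your preliminary rewriting of $J$ and $J_g$ as functionals of the conditional-law flow via the tower property is a clean way to package the symmetrization by Fubini that the paper carries out inside Lemma \ref{lem:pot_game_finite}, but it is the same computation; the proof is correct and not a genuinely different route.
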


The precise formulation is provided in Theorem \ref{thm:pot_game_finite} below. 
Additionally, in Sections \ref{sec:main} and \ref{sec:comparison_Cardaliaguet} below, we discuss specific structures which show that this theorem covers previously known results in the literature where dynamics are independent of the law. An extension to law-dependent dynamics is treated in Subsection  \ref{ssec:law-dep}. Furthermore, it is clear that the results of this paper remain true in the absence of the common noise by simply setting $\sigma^0=\lambda^0=0$ and in the absence of jumps by setting $\lambda=\lambda^0=0$. As discussed in Subsection \ref{ssec:markov},  in the classical separable models, we obtain a standard MFG depending only on the distribution of the state process and not the control.

A key technical tool in our analysis is the invariance principle presented in
Theorem \ref{thm:invariance}. This result states that both the value function of the MFC problem and the Nash equilibria of the MFG are independent of the underlying probabilistic structure. 
This property is essential to our approach, as our proof requires extending the original probabilistic basis by adding independent Bernoulli random variables.
The invariance principle has been established for diffusions in \cite{Cosso2023,McKean_Vlasov_DPP}, and
we extend it to our setting.

\subsection{Context and Earlier Results}

Following the introduction of potential mean field games (MFGs) in \cite[Section 2.6]{lasry_mean_2007}, 
they have been developed in \cite{briani2018stable, cardaliaguet2015weak, cardaliaguet2015mean, cardaliaguet2015second, orrieri2019variational, graber2021weak, carmona2025conditional} in both the first-order (i.e., without Brownian noise) and second-order  settings. 
The majority of the existing literature on potential mean field games starts from a characterization 
of the Nash equilibria as solutions of a forward-backward system of partial differential equations. 
These works interpret the PDE system as an optimality condition by using an elegant application of the Fenchel-Rockafellar Duality 
Theorem or the von Neumann Minimax Principle, as illustrated in \cite{orrieri2019variational}.

For a comprehensive introduction to these techniques — especially in the first-order case — we refer the reader to the recent work \cite{graber2024remarks}, which also provides historical context.  
Related analyses appear in the context of optimal transport in \cite{cardaliaguet2013geodesics}
and for (first-order) \emph{variational MFGs} 
in \cite[Section 2]{achdou2020lecture}.
Extensions of these results include infinite-horizon cost criteria \cite{masoero2019long} and problems with state constraints \cite{daudin2022optimal}. 
The structure of potential MFGs has been leveraged to study weak solutions of the master equation \cite{cecchin2022weak} and to prove the convergence of learning procedures to solutions of the MFG system \cite{briani2018stable, cardaliaguet2017learning}. Applications of potential MFGs include games for market competition \cite{graber2018variational, bonnans2021schauder}.
We also refer to \cite{guo2023} for the study of stochastic differential potential games with finitely many players.

On finite state spaces, direct arguments such as the 
Pontryagin principle imply that potential MFG equilibria can be viewed 
as first-order conditions of an associated control problem
as initiated in  \cite{gomes2013continuous, gueant2011infinity}. 
For finite state problems with no uniqueness, \cite{cecchin2022selection} proposes a 
selection principle of MFG equilibria as the minimizers of the associated control problem when uniqueness fails. 
In the continuous state case, an approach to potential MFGs that 
is not based on convex duality is presented in the classical book 
\cite{carmona_probabilistic_2018-1}.  
Chapters 6.2.5 and 6.7.2 of \cite{carmona_probabilistic_2018-1}
identify the Pontryagin system of the MFC problem with the
FBSDE system describing MFG Nash equilibria in the probabilistic setting. The forthcoming paper
\cite{carmona2025conditional} discusses potential mean field games in the weak formulation.

We work in a general stochastic formulation of the control problem
using direct methods, rather than the FBSDEs.  
Our results are in considerable generality,
extending to problems featuring $(i)$ common noise, $(ii)$ jumps, $(iii)$ non-separable cost structures and $(iv)$ coefficients that depend on the entire path of the state variable. The non-separability of the cost functions leads to a \emph{mean field game of controls}, 
where the interaction between agents is through both the state and control
variables. Potential MFGs of controls have been studied in \cite{graber2021weak, bonnans2021schauder} in their analytic formulation. We provide further discussion  of the deterministic and probabilistic formulations in Subsection
\ref{sec:comparison_Cardaliaguet}.\\

The paper is organized as follows. In Section 3, after reviewing  notation, we define the mean field control (MFC) problem and the mean field game (MFG) of controls, and we present our main result. We then discuss the result and several special cases in Section \ref{sec:discuss}, and 
the proof of our main result Theorem \ref{thm:pot_game_finite}
is given in Section \ref{sec:proof}.  
Examples of potential mean field games of controls and the connection between the Langevin dynamics,
MFC, and MFG are presented in Section \ref{sec:examples}. 
Section \ref{sec:invariance} 
proves the invariance principle.
Appendix \ref{app:point} provides a
summary of standard results and notations on 
point processes, and Appendix \ref{sec:dih}
discusses the discounted infinite horizon setting.

\section{Notation and Setting}

For a topological space $S$, $\sB(S)$ is the set
of all Borel sets.
The dimension of the state process is denoted by $n$,
$d$ refers to the dimension of the idiosyncratic 
Brownian motion,  and $\ell$ is the dimension 
of the common Brownian noise. 
$T>0$ is the horizon. $\sD_k(0,T)$ is the Polish space of c\`{a}dl\`{a}g functions 
from $[0,T]$ to $\R^k$ endowed it with the Skorokhod $J_1$ topology.
When the domain is clear, we write $\sD_k$.
On $\sD_k$, we sometimes utilize the supremum norm 
$\|x\|_T:=\sup_{t\in[0,T]}|x(t)|$. 
The \emph{control} set $U$ is a non-empty Borel
subset of a Euclidean space. 
On any Polish space $S$, $\sP(S)$
is the set of probability measures, and
$\sP_2(S)$ is  the Wasserstein space of measures 
with finite second moments. 
We will frequently consider measures on the product space $\sD_k\times U$ and will generically denote by $\Pi(x,a)=x$, $(x,a)\in\sD_k\times U$, the projection on the first coordinate. 
If $\nu\in\sP(\sD_k\times U)$, then $\Pi_\sharp\nu$ refers to the push-forward of $\nu$ under $\Pi$.
A function $\phi:\sP_2(S)\to\R$ is called (linearly) differentiable,
if there exists a measurable function $\pme \phi :\sP_2(S)\times S\to\R$ 
which is of at most quadratic growth in $x\in S$ and satisfies, 
\begin{equation*}
    \phi(\mu)-\phi(\eta)= \int_0^1 
    \int_S \pme \phi(\eta+\tau (\mu-\eta))(s) \, (\mu-\eta)(\d s)\, \d \tau,
    \qquad \forall \mu, \eta \in\sP_2(S).
\end{equation*}
If $\phi:\sP_2(S)\to\R^k$ is vector-valued, then we say that $\phi$ is differentiable if all its components are differentiable. In this case we set, $\delta_\eta \phi = (\delta_\eta \phi_1,\ldots,\delta_\eta\phi_k)$. For a $\sigma$-algebra $\sG$ and an $S$-valued
random variable $Y$, the random measure $\cL(Y \,| \,  \sG) \in \sP(S)$
is the regular condition distribution of $Y$ given $\sG$. 
We write $\cL^\P=\cL$, when the dependence on the underlying probability measure $\P$ is important.
When $\sG$ is trivial, $\cL(Y)$ is the \emph{law} of $Y$. Finally, $\sL^2(\sG)$ denotes the set of all square-integrable $n$-dimensional random variables which are $\sG$-measurable.
\subsection{Probabilistic Structures}
We follow the standard terminology of \cite{Ikeda_Watanabe} and
provide further details in Appendix \ref{app:point}.

On the \emph{jump space} $(E,\sE) := (\R^k\setminus\{0\},\sB(\R^k\setminus\{0\}))$ we fix characteristic jump measures $n(\d\zeta)$ and $n^0(\d\zeta)$ of the idiosyncratic jumps $N$ and common jumps $N^0$, respectively. 

\begin{Def}\label{def:prob_basis}
{\rm{A}} reference probability basis {\rm{$\gamma$ is a tuple
$\gamma = (\Omega, \sF, \P, \F, \sW, \sN)$ given by,
\begin{enumerate}[(i)]
\item $(\Omega, \sF, \P, 
\F = (\sF_t)_{t\geq0})$ is a filtered 
probability space satisfying the usual conditions;
 \item $\sW=(W^0,W)$, where $W^0$ and
$W $  are  $\F$-Brownian motions in $\R^\ell$ and $\R^d$, respectively;
\item $\sN=(N^0,N)$, where
$N^{0}$ and $N$ are Poisson random measures associated to stationary 
$\F$-Poisson stationary point processes with characteristic jump measures $n^0(\d\zeta)$ and $n(\d\zeta)$, respectively. 
\end{enumerate}
We assume that the corresponding point processes 
$p^0$ and $p$
have $\P$-almost surely disjoint domains, or equivalently
their jump times are almost surely disjoint. We further assume that $(\sW,\sN)$ are independent. Finally, let $\sG$ denote the completion of the sigma-algebra
generated by the common noise processes $(W^0,N^0)$. }}
\end{Def}

\begin{Def}[Admissible controls]\label{def:adm_controls}
{\rm{The set of}} admissible controls  
{\rm{$\sA$ is the collection of all $\F$-predictable
processes $\alpha:\Omega\times [0,T] \to U$
that are square integrable, i.e.,
\be
 \label{eq:norm}
\|\alpha\|_{2,T}^2:= \E\int_0^T |\alpha_t|^2 \ \d t < \infty.
\ee}}
\end{Def}

When the dependence on the basis $\gamma$ is important, we write $\sA(\gamma)$,  $\cG(\gamma)$,
$$
\gamma=(\Om^\gamma,\sF^\gamma,\P^\gamma,
\F^\gamma=(\sF_t^\gamma)_{t \in [0,T]},\sW^\gamma,\sN^\gamma),
\ \ \E^\gamma= \E^{\P^\gamma}, \ \text{and} \ \ \cL^\gamma= \cL^{\P^\gamma},
$$
and we suppress this dependence in our notation when it is clear or redundant.

\subsection{Controlled State Process}

In this subsection, we state standard conditions 
on the coefficients of
the equation \eqref{eq.state}:
\begin{equation*}
    (b,\sigma,\sigma_0):[0,T]\times \sD_n\times U
    \to \R^n \times \sM_{n\times d}\times
     \sM_{n\times\ell},
\end{equation*}
\begin{equation*}
    (\lambda,\lambda^0): [0,T] \times \sD_n \times U \times E
    \to \R^n\times \R^n,
\end{equation*}
where $ \sM_{m\times m'}$ is the set of
$m\times m'$ matrices.

\begin{Asm}\label{asm:state}
All coefficients are measurable and  
there exist constants $K,L>0$ and $K(\zeta),L(\zeta)$ 
such that for any $(t,x,x',a,\zeta)\in [0,T] \times 
\sD_n^2 \times U\times E$ we have,
\begin{align*}
    |(b,\sigma,\sigma^0)(t,x,a)-(b,\sigma,\sigma^0)(t,x',a)| &\leq L\|x-x'\|_t,\\
    |(\lambda,\lambda^0)(t,x,a,\zeta)-(\lambda,\lambda^0)(t,x',a,\zeta)| &\leq L(\zeta)\|x-x'\|_t,
\end{align*}
$$
|(b,\sigma,\sigma^0)(t,x,a)|^2 \leq K(1+\|x\|_t^2+|a|^2),\ \
    |(\lambda,\lambda^0)(t,x,a,\zeta)|^2 \leq K(\zeta)^2(1+\|x\|_t^2+|a|^2),
$$ 
and the constants $L(\zeta),K(\zeta)$ satisfy $\int_E (L(\zeta)^2+K(\zeta)^2)\, (n^0+n)(\d\zeta)<\infty$.
\end{Asm}

Set $\boldsymbol{\sigma} := (\sigma^0,\sigma)$, and 
$\boldsymbol{\lambda} := (\lambda^0,\lambda)$,
and we rewrite \eqref{eq.state} as,
\begin{equation}
\begin{split}
 X^\alpha_t = X_0 + \int_0^t &
 b(s,X^\alpha_{\cdot\land s},\alpha_s)\,\d s + \int_0^t 
 \boldsymbol{\sigma}(s,X^\alpha_{\cdot\land s},\alpha_s)\,\d \bW_s\\ 
&+ \int_0^t\int_E 
\boldsymbol{\lambda}(s,X^\alpha_{\cdot\land s-},\alpha_s,\zeta)\, 
\wt\bN (\d s,\d \zeta),\quad t\geq0.
\end{split}
\label{eq:state} 
\tag{SDE$(X_0,\alpha)$}
\end{equation}

Under these assumptions, the standard Picard iteration technique implies the following theorem, see \cite[Theorem 3.1]{kunita_jumps}.

\begin{Thm}\label{thm:state}
Under Assumption \ref{asm:state}, for any
probability basis $\gamma$, 
admissible control $\alpha\in\sA(\gamma)$,
and $X_0\in \sL^2(\sF^\gamma_0)$, 
there exists a unique $\mathbb{F}^\gamma$-adapted 
c\`{a}dl\`{a}g process $X^\alpha$ that satisfies \ref{eq:state}. Furthermore, 
$\E^\gamma[\sup_{0\leq s\leq t}|X_s^{\alpha}|^2]<\infty$ for all $t \ge 0$.
\end{Thm}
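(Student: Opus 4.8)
The plan is to prove Theorem \ref{thm:state} by the classical Picard iteration scheme, adapted to the path-dependent jump-diffusion setting of \ref{eq:state}; this is precisely the content of \cite[Theorem 3.1]{kunita_jumps}, and I would present the argument in the present notation. \emph{Step 1 (iterates and a priori bounds).} Fix a basis $\gamma$, an admissible control $\alpha \in \sA(\gamma)$, and $X_0 \in \sL^2(\sF^\gamma_0)$. Define $X^{(0)} \equiv X_0$ and inductively let $X^{(m+1)}$ be the right-hand side of \ref{eq:state} with $X^{(m)}$ substituted into all coefficients. Using the sublinear growth bounds of Assumption \ref{asm:state}, the Burkholder--Davis--Gundy (BDG) and Doob inequalities for the $\d\bW$-integrals, the $L^2$-isometry for the compensated jump integrals $\wt\bN$ together with $\int_E K(\zeta)^2\,(n^0+n)(\d\zeta) < \infty$, and the admissibility bound $\E^\gamma\int_0^T|\alpha_t|^2\,\d t < \infty$, an induction shows that each $X^{(m)}$ is $\F^\gamma$-adapted, c\`adl\`ag, and satisfies $\E^\gamma[\sup_{s\le T}|X^{(m)}_s|^2] < \infty$. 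The crucial point is that every estimate is written in terms of the running supremum $\|X\|_t=\sup_{s\le t}|X(s)|$, which matches the form of the hypotheses (stated via $\|x-x'\|_t$) and uses $\|X^{(m)}_{\cdot\wedge s}\|_s=\|X^{(m)}\|_s$, so the stopped-path dependence causes no difficulty.

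\emph{Step 2 (contraction and passage to the limit).} Set $\Delta^m_t := \E^\gamma[\sup_{s\le t}|X^{(m+1)}_s - X^{(m)}_s|^2]$. Applying BDG and the jump isometry to the difference of two consecutive iterates, and using the Lipschitz bounds with constants $L$ and $L(\zeta)$, one gets $\Delta^m_t \le C\int_0^t \Delta^{m-1}_s\,\d s$ with $C$ depending only on $T$, $L$, and $\int_E L(\zeta)^2(n^0+n)(\d\zeta)$. Iterating yields $\Delta^m_T \le C^m T^m \Delta^0_T / m!$, hence $\sum_m (\Delta^m_T)^{1/2} < \infty$, so $(X^{(m)})_m$ is Cauchy in the Banach space of $\F^\gamma$-adapted c\`adl\`ag processes with norm $X \mapsto \E^\gamma[\sup_{s\le T}|X_s|^2]^{1/2}$. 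Call the limit $X^\alpha$; passing to the limit in the Lebesgue and stochastic integrals (again via BDG and the isometry, together with the Lipschitz estimates) shows $X^\alpha$ solves \ref{eq:state}, and $\E^\gamma[\sup_{s\le t}|X^\alpha_s|^2]<\infty$ for all $t$ is inherited from the uniform bound of Step 1, or re-derived by a Gronwall argument directly on the (finite, locally bounded) map $t\mapsto\E^\gamma[\sup_{s\le t}|X^\alpha_s|^2]$.

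\emph{Step 3 (uniqueness).} If $X$ and $Y$ both solve \ref{eq:state} with the same data, the same Lipschitz estimates give $\E^\gamma[\sup_{s\le t}|X_s - Y_s|^2] \le C\int_0^t \E^\gamma[\sup_{u\le s}|X_u - Y_u|^2]\,\d s$; since both sides are finite by the moment bound, Gronwall's lemma forces $\E^\gamma[\sup_{s\le T}|X_s - Y_s|^2] = 0$, i.e.\ $X$ and $Y$ are indistinguishable. The main obstacle here is bookkeeping rather than conceptual: one must carry the path-dependence through every estimate using the supremum norm, and handle the two compensated jump integrals simultaneously, relying on $\int_E (L(\zeta)^2 + K(\zeta)^2)(n^0+n)(\d\zeta) < \infty$ to control the jump contributions (the almost-sure disjointness of the jump times of $p^0$ and $p$ plays no essential role in these $L^2$ estimates). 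Since no idea beyond the standard jump-diffusion Picard/Gronwall machinery is required, one may simply invoke \cite[Theorem 3.1]{kunita_jumps}.
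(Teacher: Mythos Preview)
Your proposal is correct and takes exactly the same approach as the paper: the paper does not give a detailed proof but simply states that the result follows from the standard Picard iteration technique and cites \cite[Theorem 3.1]{kunita_jumps}. Your write-up is a faithful expansion of precisely that argument, so there is nothing to add or correct.
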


\section{Main Result}
\label{sec:main}

In this section, we first define the MFC and 
MFG problems and then state the main result.

\subsection{The Mean Field Control Problem}
\label{sec:mfc}

Let a probability basis $\gamma$ be given and 
fix $X_0\in \sL^2(\sF_0^\gamma)$. 
For a finite horizon $T<\infty$, let 
$$
F:[0,T]\times \sD_n\times U \times\sP_2(\sD_n\times U)
 \to \R, \qquad G: \sD_n \times\sP_2(\sD_n\times U)\to \R,
$$ 
be given running and terminal cost functions satisfying 
Assumption \ref{asm:cost} below. 
Let $J(X_0,\alpha)$
be as in \eqref{eq:MFC_cost1}, and recall
that $\sG^\gamma$ is the completion of the $\sigma$-algebra
generated by the common noise processes $(W^{\gamma,0},N^{\gamma,0})$. 
In the absence of common noise, $\sG^\gamma$ is the trivial $\sigma$-algebra.
We have the following important technical observation.

\begin{Lem}
\label{lem:technical}
For any $\alpha\in\sA(\gamma)$ the $\sP_2(\sD_n \times U)$ measure-valued process
$$
\cL^{\gamma,\alpha}_t= \cL((X^{\gamma,\alpha}_{\cdot\land t},\alpha_t) \,|\, \sG^\gamma)
    = \cL((X^{\gamma,\alpha}_{\cdot\land t},\alpha_t) \, 
    | \, \sigma(W^{\gamma,0}_s,N^{\gamma,0}_s([0,s]\times B):s\leq t,B\in\sE))
$$
has an $\F^\gamma$-progressively measurable modification, and
we always consider this version.
\end{Lem}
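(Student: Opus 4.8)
The plan is to split the lemma into two parts — the asserted identity of the two conditional laws, and the existence of the progressively measurable modification — and treat them in that order. Throughout, write $Y_t := (X^{\gamma,\alpha}_{\cdot\wedge t},\alpha_t)$, an $\sF_t^\gamma$-measurable random variable valued in the Polish space $\sD_n\times U$, and let $\sG_t^\gamma$ denote the $\P$-augmentation of $\sigma\big(W^{\gamma,0}_s,\,N^{\gamma,0}([0,s]\times B):s\le t,\,B\in\sE\big)$, so that $\G^\gamma:=(\sG_t^\gamma)_{t\in[0,T]}$ is an increasing, complete, right-continuous (augmented natural filtrations of L\'evy processes being right-continuous) subfiltration of $\F^\gamma$, with $\sG_T^\gamma=\sG^\gamma$.

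For the identity, I would decompose $\sG^\gamma=\sG_t^\gamma\vee\cH_t$, where $\cH_t$ is generated by the post-$t$ increments of $W^{\gamma,0}$ together with the restriction of $N^{\gamma,0}$ to $(t,T]\times E$. Since $(W^{\gamma,0},N^{\gamma,0})$ is a L\'evy datum relative to $\F^\gamma$, $\cH_t$ is independent of $\sF_t^\gamma$, hence of $\sigma(Y_t)\vee\sG_t^\gamma\subseteq\sF_t^\gamma$; the standard lemma on the irrelevance of independent conditioning then yields $\E[h(Y_t)\mid\sG^\gamma]=\E[h(Y_t)\mid\sG_t^\gamma]$ a.s.\ for every bounded measurable $h$. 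Applying this to each member of a fixed countable measure-determining family $\{h_k\}_{k\ge1}\subseteq C_b(\sD_n\times U)$ (which exists because $\sD_n\times U$ is Polish) and intersecting the associated null sets shows that $\cL(Y_t\mid\sG^\gamma)$ and $\cL(Y_t\mid\sG_t^\gamma)$ coincide a.s., for each fixed $t$.

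For the modification, I would first observe that $(t,\omega)\mapsto Y_t(\omega)$ is $\F^\gamma$-progressively measurable: $t\mapsto X^{\gamma,\alpha}_{\cdot\wedge t}$ is progressively measurable as a $\sD_n$-valued map since the Borel $\sigma$-field of $(\sD_n,J_1)$ is generated by the evaluations and $(t,\omega)\mapsto X^{\gamma,\alpha}_{s\wedge t}(\omega)$ is progressively measurable for each $s$ (the solution being c\`adl\`ag and $\F^\gamma$-adapted), while $\alpha$ is $\F^\gamma$-predictable. Thus each $Z^k_t:=h_k(Y_t)$ is bounded and $\F^\gamma$-progressively measurable, and I take its optional projection $\widehat Z^k$ onto $\G^\gamma$: this is $\G^\gamma$-optional, \emph{a fortiori} $\F^\gamma$-progressively measurable, and satisfies $\widehat Z^k_t=\E[Z^k_t\mid\sG_t^\gamma]$ a.s.\ at every deterministic $t$, so by the previous step $\widehat Z^k_t=\int h_k\,\d\cL^{\gamma,\alpha}_t$ a.s.\ for all $t$ and $k$. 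Since $\mu\mapsto(\int h_k\,\d\mu)_{k\ge1}$ is a Borel isomorphism of $\sP(\sD_n\times U)$ onto a Borel subset $\Theta\subseteq\R^{\N}$ with Borel inverse $\Psi$ (a continuous injection between Polish spaces has Borel image and Borel inverse), I set $\widehat\cL_t(\omega):=\Psi\big((\widehat Z^k_t(\omega))_{k\ge1}\big)$ on $\{(\widehat Z^k_t)_k\in\Theta\}$ and $\widehat\cL_t(\omega):=\mu_\star$ (a fixed element of $\sP_2(\sD_n\times U)$) elsewhere; then $\widehat\cL$ is $\F^\gamma$-progressively measurable by composition and $\widehat\cL_t=\cL^{\gamma,\alpha}_t$ a.s.\ for each $t$. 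The $\sP_2$-valuedness is a minor addendum, as $\cL^{\gamma,\alpha}_t\in\sP_2(\sD_n\times U)$ for a.e.\ $t$, $\P$-a.s., by Theorem \ref{thm:state} and $\int_0^T\E|\alpha_t|^2\,\d t<\infty$, and one may redefine $\widehat\cL_t:=\mu_\star$ on the remaining progressively measurable exceptional set.

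The step I expect to be the real obstacle is precisely the one the above construction is designed to circumvent: each $\cL^{\gamma,\alpha}_t$ is a priori determined only up to a $t$-dependent null set, so producing a \emph{single} progressively measurable process that agrees with it for all $t$ simultaneously needs care. The mechanism — test against a fixed countable family, realize the scalar conditional expectations as one optional projection onto the common-noise filtration $\G^\gamma$, and reassemble the measure through the Borel isomorphism — is what makes this go through, and the identity proved in the first step is exactly what licenses the reduction from the fixed $\sigma$-field $\sG^\gamma$ to the \emph{increasing} filtration $\G^\gamma$, without which the optional-projection machinery would not apply. The remaining ingredients (the independence argument and the progressive measurability of $Y$) are routine.
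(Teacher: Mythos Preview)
Your argument is correct and considerably more thorough than what the paper actually provides. The paper's proof is two sentences: it observes that $\sD_n\times U$ is separable metric, notes that completeness of $\sF_0^\gamma$ makes $\cL^{\gamma,\alpha}_t$ adapted to $\F^\gamma$, and then simply asserts that a progressively measurable modification exists. In particular, the paper neither proves the identity between conditioning on $\sG^\gamma$ and conditioning on $\sG_t^\gamma$ (which is precisely what justifies the adaptedness claim) nor verifies the joint measurability in $(t,\omega)$ needed to invoke the standard ``adapted measurable $\Rightarrow$ progressive modification'' theorem.

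Your route differs in that you supply both missing pieces explicitly: you prove the identity via independence of the post-$t$ common-noise increments from $\sF_t^\gamma$, and you bypass the abstract modification theorem entirely by building the version directly through optional projection onto the common-noise filtration, followed by reconstruction of the measure from its integrals against a countable determining class via a Borel isomorphism. What this buys you is a self-contained construction that does not lean on unverified joint measurability; what the paper's approach buys is brevity, at the cost of leaving real work to the reader. Your observation that the identity is exactly what licenses passing from a fixed $\sigma$-field to an increasing filtration (so that optional projection applies) is the genuine insight here, and it is also the implicit content of the paper's one-line adaptedness claim.
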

\begin{proof}
Equipped with the product of the Skorokhod $J_1$ 
and Euclidean topologies,
$\sD_n \times U$  is a separable metric space. 
Since $\sF^\gamma_0$ is assumed to contain all 
$(\sF^\gamma,\P^\gamma)$-null sets, $\cL^{\gamma,\alpha}_t$
is adapted to the filtration $\F$.  Therefore, there exists a 
$\F^\gamma$-progressively measurable modification of $\cL^{\gamma,\alpha}$.
\end{proof}

For a given $X_0 \in \sL^2(\sF^\gamma_0)$, the
\emph{value function} of the MFC is defined by,
\be
\label{eq:MFC_cost}
v(X_0,\gamma):= \inf_{\alpha\in\sA(\gamma)}  J^\gamma(\alpha, X_0) 
=\inf_{\alpha \in \sA(\gamma)}  \E ^\gamma \left[\int_0^T 
F(t, X^{\alpha}_{\cdot\wedge t},\alpha_t,\cL^{\gamma,\alpha}_t)\,\d t 
+ G(X^{\alpha}_{\cdot\land T},\cL^{\gamma,\alpha}_T) \right]
\ee
where $X^\alpha$ solves \eqref{eq:state} on $\gamma$. Further, for $\varrho \in \sP_2(\R^n)$, let $\Gamma(\varrho)$ be the 
set of all bases $\gamma$ such that there exists $X_0 \in \sL^2(\sF_0^\gamma)$
with distribution $\varrho$, let $\cI^\gamma(\varrho)$
be the set of all such initial conditions, and we set
\be
\label{eq:V}
V(\varrho):= \inf_{\gamma \in \Gamma(\varrho)} \inf_{X_0 \in \cI^\gamma(\varrho)}
v(X_0,\gamma)
=\inf_{\gamma \in \Gamma(\varrho)} \inf_{X_0 \in \cI^\gamma(\varrho)} 
\inf_{\alpha\in\sA(\gamma)}  J^\gamma(\alpha, X_0).
\ee
Under mild conditions on the coefficients,
$v$ depends only on the law of $X_0$,
and we now formalize this property.

\begin{Def}
\label{def:invariance}
{\rm{We say  that an  MFC problem
is \emph{invariant under the probability bases}, 
for any $\gamma$,
$X_0\in \sL^2(\sF^\gamma_0)$ and $\varrho \in \sP_2(\R^n)$
we have,
$$
\cL^{\gamma}(X_0)= \varrho
\quad \Rightarrow \quad
v(X_0,\gamma) =  V(\varrho).
$$
}}
\end{Def}

In the diffusion setting,
invariance is proved in \cite[Theorem 3.6]{Cosso2023}
for `rich enough' probabilistic structures, see \cite[Section 2.1.3]{Cosso2023},
and in \cite[Proposition 2.4]{McKean_Vlasov_Formulations}, and our
Theorem~\ref{thm:invariance}
extends it to more general models.
Although almost all problems have this property,
some natural continuity and growth assumptions are needed
as summarized in Assumption~\ref{asm:invariance}} and  the counter
example provided in  \cite[Section 3]{Cosso2023}
highlights their necessity.

In what follows, we sometimes suppress the dependence on $\gamma$
and write $v(X_0)$, $J(X_0,\alpha)$.  Additionally, although the terminal cost $G$ depends 
on $\cL^\alpha_T$ only through its projection
$\cL(X^{\alpha}_{\cdot\land T} \,|\, \sG)$,
this observation is not relevant in our analysis and we do not emphasize it.

\subsection{The Potential Mean Field Game of Controls}
\label{sec:mfg}

In this subsection,
we give the definition of the \emph{potential mean field game 
of controls equilibrium under common noise};
see \cite[Definition 2.1]{carmona2016common} for MFGs under common noise and \cite{djete2022extended, djete2023mean} 
for general MFC and MFG of controls close to our setting. 

In these problems, the representative 
player takes an $\F$-adapted measure valued process
$t\mapsto\nu^*_t\in\sP_2(\sD_n \times U)$ as given 
and solves the optimal control problem  
$
\inf_{\alpha\in\sA} J_g(X_0,\alpha,\nu)
$
where $J_g$ is as in \eqref{eq:MFG-cost} with cost functions given by \eqref{eq:pot-game-cost}. 
Then, the Nash equilibrium is defined as follows.

\begin{Def}\label{def:MFG_equilibrium}
    {\rm{Consider  an initial condition $X_0\in \sL^2(\sF_0)$. A pair $(\alpha^*,\nu^*)$ 
    is a}} (potential) MFG of controls Nash equilibrium
    {\rm{if the followings hold:
    \begin{enumerate}[(i)]
        \item The control process $\alpha^* \in \sA$ satisfies
        $J_g(X_0,\alpha^*,\nu^*)  = \inf_{\alpha\in\sA}\
        J_g(X_0,\alpha,\nu^*)$.
        \item $\nu^* = (\nu^*_t)_{t\in[0,T]}$ is an 
        $\F$-progressive, $\sP_2( \sD_n\times U)$-valued process that satisfies
        \begin{equation}
        \label{eq:verify}
            \nu^*_t = \cL^{\alpha^*}_t=\cL((X^{\alpha^*}_{\cdot\land t}, \alpha^*_t) \,|\, \sG), 
            \qquad \mathrm{Leb}\otimes\P{\text{-a.e.}}
        \end{equation} 
    \end{enumerate}}}
\end{Def}

\begin{Rmk}{\rm{
In the absence of common noise, 
$\nu^* = (\nu^*_t)_{t\in[0,T]}$ is a deterministic 
$\sP_2(\sD_n\times U)$-valued 
process.  Then, the 
second condition above reduces to 
$\nu^*_t = \cL(X^{\alpha^*}_{\cdot\land t}, \alpha^*_t)$
for a.e.\ $t\in[0,T]$.
In this case, Borel measurability of 
$t\mapsto\cL(X^{\alpha^*}_{\cdot\land t}, \alpha^*_t)$ 
follows directly from the measurability of 
$(X^{\alpha^*}, \alpha^*)$. }}
\end{Rmk}

\subsection{Main Result}
\label{ssec:main}

Our main result provides a connection between the MFC and MFG 
problems.  It is the central result of the paper and 
its proof is given in Section \ref{sec:proof} 
by probabilistic arguments.  

To state our assumptions, for any $\nu \in \sP_2(\sD_n\times U)$, we denote the second moment by
\begin{equation}\label{eq:2nd_moment}
    \fs(\nu):=
\int_{\sD_n\times U}(\|x\|^2_T + |a|^2) \,\nu(\d x, \d a).
\end{equation}

\begin{Asm}\label{asm:cost}{\rm{
We assume that $F$,$G$ are measurable and for $(t,x,x',\nu,a,a') \in [0,T]\times \sD_n^2 \times
\sP_2(\sD_n\times U) \times U^2$
they satisfy:
\begin{itemize}
\item (Differentiability). The functions $F(t,x,a,\cdot)$ and $G(x,\cdot)$ are linearly differentiable with measurable the linear derivatives $\delta_\nu F$ and $\delta_\nu G$.
\item (Continuity). The maps $F(t,x,a,\cdot)$, $\pmu F(t,x,a,\cdot)(x',a')$, $G(x,\cdot)$ and $\pmu G(x,\cdot)(x')$ are continuous.
\item (Quadratic Growth).
\begin{align*}
    |F(t,x,a,\nu)| + |G(x,\nu)| + &\left| \pmu F(t,x,a,\nu)(x',a') \right| +\left| \pmu G(x,\nu)(x') \right|\\
    & \leq C(1+\|x\|_T^2 +|a|^2  +\fs(\nu)+ \|x'\|_T^2 + |a'|^2).
\end{align*}
\end{itemize}}}
\end{Asm}

We prove the main result under the assumption of invariance which is presented in Theorem~\ref{thm:invariance} under Assumption \ref{asm:invariance}.
Additionally, its proof reveals that it holds under a weaker version
of invariance as discussed in Remark \ref{rmk:condition} below.

\begin{Thm}\label{thm:pot_game_finite}
    Suppose that  Assumptions \ref{asm:state} and \ref{asm:cost} hold, and 
    the MFC problem  with payoff functional \eqref{eq:MFC_cost} 
    is invariant under probability bases.
    Then, for $X_0\in \sL^2(\sF_0)$, $\alpha^* \in \sA$,
    and $ \nu^*_t = \cL((X^{\alpha^*}_{\cdot\land t}, \alpha^*_t) \,|\, \sG)$,
    $$
     J(X_0,\alpha^*)= \inf_{\alpha \in \sA}\ J(X_0,\alpha)
     \quad \Rightarrow \quad
     J_g(X_0, \alpha^*, \nu^*)=
     \inf_{\alpha \in  \sA} \ J_g(X_0,\alpha, \nu^*).
     $$
    Consequently, for any minimizer  $\alpha^*\in\sA$ of the MFC  
    problem \eqref{eq:MFC_cost}, 
    the pair $(\alpha^*, \nu^*)$ is a Nash equilibrium for
    the  MFG of controls  \eqref{eq:MFG-cost}.
  \end{Thm}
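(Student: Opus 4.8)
The plan is to show that if $\alpha^*$ minimizes the MFC functional $J(X_0,\cdot)$, then it also minimizes the game functional $J_g(X_0,\cdot,\nu^*)$ where $\nu^*$ is the conditional flow generated by $\alpha^*$. The conceptual heart of the argument is that the game running cost $f$ and terminal cost $g$ defined in \eqref{eq:pot-game-cost} are exactly the linear (flat) derivatives of the MFC costs with respect to the measure argument, frozen at $\nu^*$; consequently $J_g(X_0,\cdot,\nu^*)$ should be, up to constants, the Gateaux derivative of a potential functional whose minimizer is $\alpha^*$. More precisely, I would introduce, for a fixed competitor $\alpha\in\sA$, the perturbation $\alpha^\tau$ that interpolates between $\alpha^*$ and $\alpha$, and the corresponding conditional laws $\nu^\tau_t:=\cL((X^{\alpha^\tau}_{\cdot\land t},\alpha^\tau_t)\mid\sG)$. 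Because the dynamics \eqref{eq:state} are law-independent, the state process $X^{\alpha^\tau}$ depends on $\tau$ only through the control path, which keeps the perturbation analysis tractable.

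The key technical device — and the reason the invariance principle is invoked — is that one cannot in general realize the linear interpolation $\nu^*+\tau(\mu-\nu^*)$ between two conditional laws as the conditional law of a single admissible control on the original basis. The standard fix, which I would carry out, is to enlarge the probability basis by an independent Bernoulli (coin-flip) variable $B_\tau$ with $\P(B_\tau=1)=\tau$, and define the "mixed" control $\alpha^\tau:=B_\tau\alpha+(1-B_\tau)\alpha^*$; then the conditional law of $(X^{\alpha^\tau}_{\cdot\land t},\alpha^\tau_t)$ given $\sG$ on the enlarged basis is precisely $(1-\tau)\nu^*_t+\tau\,\cL((X^{\alpha}_{\cdot\land t},\alpha_t)\mid\sG)$ (using independence of $B_\tau$ from the common noise). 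Invariance under probability bases (Theorem~\ref{thm:invariance}) guarantees that $V(\varrho)$, and hence the infimum of $J$, is unchanged on the enlarged basis, so $\alpha^*$ remains a minimizer there and $J(X_0,\alpha^*)\le J(X_0,\alpha^\tau)$ for every $\tau\in[0,1]$. This is the step I expect to be the main obstacle: one must carefully track that the Bernoulli enlargement preserves the filtration conditions, that $\alpha^\tau$ is admissible (square-integrable and predictable with respect to the enlarged filtration), and that the conditional-law identity holds $\mathrm{Leb}\otimes\P$-a.e.

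With the enlargement in hand, the argument concludes by differentiating at $\tau=0^+$. Writing $\Phi(\tau):=J(X_0,\alpha^\tau)$, optimality of $\alpha^*$ gives $\Phi'(0^+)\ge 0$. Using the linear differentiability of $F(t,x,a,\cdot)$ and $G(x,\cdot)$ from Assumption~\ref{asm:cost}, together with the explicit form $\nu^\tau_t=(1-\tau)\nu^*_t+\tau\,\widehat\nu_t$ (where $\widehat\nu_t:=\cL((X^{\alpha}_{\cdot\land t},\alpha_t)\mid\sG)$) and the fact that the trajectory $(X^{\alpha^\tau}_{\cdot\land t},\alpha^\tau_t)$ equals $(X^{\alpha}_{\cdot\land t},\alpha_t)$ on $\{B_\tau=1\}$ and $(X^{\alpha^*}_{\cdot\land t},\alpha^*_t)$ on $\{B_\tau=0\}$, a direct computation (interchanging expectation, the $\d t$ integral and the $\d\tau$ integral, justified by the quadratic growth bounds and the second-moment estimates of Theorem~\ref{thm:state}) yields
\begin{equation*}
\Phi'(0^+) = \E\!\left[\int_0^T\! f(t,X^{\alpha}_{\cdot\land t},\alpha_t,\nu^*_t)\,\d t + g(X^{\alpha}_{\cdot\land T},\nu^*_T)\right] - \E\!\left[\int_0^T\! f(t,X^{\alpha^*}_{\cdot\land t},\alpha^*_t,\nu^*_t)\,\d t + g(X^{\alpha^*}_{\cdot\land T},\nu^*_T)\right],
\end{equation*}
that is, $\Phi'(0^+)=J_g(X_0,\alpha,\nu^*)-J_g(X_0,\alpha^*,\nu^*)$; here the integral term $\int_{\sD_n\times U}\pmu F(t,\tilde x,\tilde a,\nu^*_t)(x,a)\,\nu^*_t(\d\tilde x,\d\tilde a)$ in the definition of $f$ arises precisely from differentiating the measure slot of $F$ along the line segment, and likewise for $g$ and $G$. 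Since $\Phi'(0^+)\ge 0$ for every $\alpha\in\sA$, we conclude $J_g(X_0,\alpha^*,\nu^*)\le J_g(X_0,\alpha,\nu^*)$, which is exactly condition (i) of Definition~\ref{def:MFG_equilibrium}; condition (ii) holds by the very choice of $\nu^*$ and Lemma~\ref{lem:technical}. The remaining care-points are the limit interchanges (dominated convergence, using Assumption~\ref{asm:cost} and continuity of the derivatives) and checking that $\nu^\tau_t\to\nu^*_t$ in $\sP_2$ as $\tau\to 0$ along with the requisite uniform integrability so that $\pmu F(t,\cdot,\cdot,\nu^\tau_t)$ may be replaced by $\pmu F(t,\cdot,\cdot,\nu^*_t)$ in the limit.
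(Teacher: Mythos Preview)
Your proposal is correct and follows essentially the same route as the paper: the Bernoulli enlargement of the basis, the resulting convex combination $(1-\tau)\nu^*_t+\tau\widehat\nu_t$ of conditional laws, the appeal to invariance to preserve optimality of $\alpha^*$ on the enlarged space, and the one-sided derivative at $\tau=0$ giving $J_g(X_0,\alpha,\nu^*)-J_g(X_0,\alpha^*,\nu^*)$ are exactly the paper's Lemmas~\ref{lem:additive} and~\ref{lem:pot_game_finite}. The only cosmetic difference is that the paper fixes a single enlarged space and varies the \emph{probability measure} $\P^\delta$ (so that the coin has bias $\delta$), whereas you vary the bias of the Bernoulli variable directly; the two are equivalent, and the paper's Fubini/tower computation that swaps the roles of $(X^*_{\cdot\land t},\alpha^*_t)$ and the integration variable in $\int\pmu F(t,\cdot,\cdot,\nu^*_t)\,\d\nu^*_t$ is precisely the step you describe as ``the integral term \ldots arises precisely from differentiating the measure slot''.
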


As we discuss in Section \ref{sec:comparison_Cardaliaguet}, our above result is an extension of 
the connection proposed in \cite{briani2018stable}
and the references therein.
We additionally show that
when the MFC problem
has the widely employed 
separable structure, the resulting
MFG is a standard one which depends only on the distribution
of the state process and not the control.

\subsection{Law-Dependent Dynamics} \label{ssec:law-dep}
This section discusses a variation of our approach to MFC problems in which the controlled state dynamics are allowed to depend on the conditional law of the state process $\cL(X^\alpha_t\,|\,\sG)$. 
We again derive a mean field game such that minimizers of the control problem give rise to Nash equilibria of this game. 

We fix a probabilistic basis $\gamma$ and for given $X_0\in\sL^2(\sF_0)$, $\alpha\in\sA$, the state process $X^\alpha$ satisfies an equation of McKean-Vlasov type,
\begin{equation}
\begin{split}
X^\alpha_t = X_0 + \int_0^t &
b(s,X^\alpha_{\cdot\land s},\alpha_s,\mu^\alpha_s)\,\d s + \int_0^t 
\boldsymbol{\sigma}(s,X^\alpha_{\cdot\land s})\,\d \bW_s+ \int_0^t\int_E 
\boldsymbol{\lambda}(s,X^\alpha_{\cdot\land s-},\zeta)\, 
\wt\bN (\d s,\d \zeta),
\end{split}
\label{eq:state-ext} 
\end{equation}
where $\mu^\alpha_t= \cL(X^\alpha_{\cdot\land t} \,|\, \sG)$, $\boldsymbol{\sigma} = (\sigma^0,\sigma)$, and
$\boldsymbol{\lambda} = (\lambda^0,\lambda)$.  
Under usual Lipschitz and linear growth assumptions, this equation has a unique strong solution for any $\alpha\in\sA$, see Section \ref{ssec:proof-law-dep}. The mean field control problem is,
\be
\label{eq:MFC-cost-ext}
\inf_{\alpha \in \sA}  \ J(X_0,\alpha) \quad \text{subject to}\quad \eqref{eq:state-ext},
\ee
where $J(X_0,\alpha)$ is as in $\eqref{eq:MFC_cost}$.

The main assumption that enables us to associate a mean field game to this control problem is the following (left) invertibility condition on the drift: If $B\in\sB(\R^n)$ denotes the image of the drift $b$, then there exists a function $
\varphi:[0,T]\times\sD_n\times B \times \sP_2(\sD_n)\to U
$
such that $\varphi(t,x,b(t,x,a,\mu),\mu) = a$ holds for all $(t,x,a,\mu)\in[0,T]\times\sD_n\times B \times \sP_2(\sD_n)$. The precise assumptions are stated in Section \ref{ssec:proof-law-dep}, and a special case is presented in Section \ref{ssec:special-case-law}. Define running and terminal cost functions by,
\begin{equation*}
\label{eq:MFG-cost-functionals-ext}
\begin{split}
f(t,x,a,\nu) &= \int_{\sD_n\times U} \delta_\mu F(t,\wt x, \wt a, \mu)(x)\,\nu(\d \wt x,\d \wt a) + F(t,x,a,\mu) \\
&\qquad + \int_{\sD_n\times U} \nabla_a F(t,\wt x , \wt a,\mu) \cdot (\delta_\mu \varphi)(t,\wt x , b(t,\wt x , \wt a ,\mu), \mu)(x)\,\nu(\d \wt x, \d \wt a ),
\\
g(x,\mu)&= \int_{\sD_n} \delta_\mu G(\wt x,\mu)(x) \,\mu(\d \wt x) + G(x,\mu),
\end{split}
\end{equation*}
for $(t,x,a,\nu)\in[0,T]\times\sD_n\times U\times\sP_2(\sD_n\times U)$ and as usual, $\mu=\Pi_\sharp\nu$. We emphasize that 
$$
(\delta_\mu \varphi)(t,\wt x , b(t,\wt x , \wt a ,\mu), \mu)(x) = (\delta_\mu \varphi)(t,\wt x, y, \mu)(x)\,\Bigl|_{y=b(t,\tilde x , \tilde a ,\mu)}.
$$

For a given adapted measure flow $\mu=(\mu_t)_{t\in[0,T]}$ on $\sD_n$ and a control $\alpha\in\sA$, we define the state process $X^{\alpha,\mu}$ of a representative player of the mean field game by,
\begin{equation}
\label{eq:MFG-state-law-dep} 
X^{\alpha,\mu}_t = X_0 + \int_0^t 
b(s,X^{\alpha,\mu}_{\cdot\land s},\alpha_s,\mu_s)\,\d s + \int_0^t 
\boldsymbol{\sigma}(s,X^{\alpha,\mu}_{\cdot\land s})\,\d \bW_s +\int_0^t\int_E 
\boldsymbol{\lambda}(s,X^{\alpha,\mu}_{\cdot\land s-},\zeta)\, 
\wt\bN (\d s,\d \zeta).
\end{equation}

The definition of a mean field game equilibrium reads analogous to Definition \ref{def:MFG_equilibrium}. We say that a pair $(\alpha^*,\nu^*)$ is a \emph{Nash equilibrium of the corresponding MFG of controls} if the following hold:
\begin{enumerate}[(i)]
\item The control $\alpha^*\in\sA$ satisfies $ J_g(X_0,\alpha^*,\nu^*)  = \inf_{\alpha\in\sA}\
 J_g(X_0,\alpha,\nu^*)$ where, with an abuse of notation,
\begin{equation}\label{eq:MFG-cost-ext}
 J_g(X_0,\alpha,\nu) = \E
\left[\int_0^T f(t,X^{\alpha,\mu}_{\cdot\land t}, \alpha_t, \nu_t)\,\d t 
+ g(X^{\alpha,\mu}_{\cdot\land t}, \mu_T) \right].
\end{equation}
Here, $(f,g)$ are as in \eqref{eq:MFG-cost-functionals-ext} and $X^{\alpha,\mu}$ solves \eqref{eq:MFG-state-law-dep} with $\mu=\Pi_\sharp\nu$.

\item $\nu^* = (\nu^*_t)_{t\in[0,T]}$ is an 
$\F$-progressive, $\sP_2( \sD_n\times U)$-valued process that satisfies
$$
    \nu^*_t = \cL((X^{\alpha^*,\nu^*}_{\cdot\land t}, \alpha^*_t) \,|\, \sG), 
    \qquad \mathrm{Leb}\otimes\Prob{\text{-a.e.}}
$$
\end{enumerate}

In section \ref{ssec:proof-law-dep} we prove the following result.

\begin{Thm}\label{thm:pot-game-law-dep}
Suppose that  Assumption \ref{asm:law-dep}  holds and that 
the MFC problem \eqref{eq:MFC-cost-ext} is invariant under probability bases.
Then, for $X_0\in \sL^2(\sF_0)$, $\alpha^* \in \sA$,
and $ \nu^*_t = \cL((X^{\alpha^*}_{\cdot\land t}, \alpha^*_t) \,|\, \sG)$,
$$
J(X_0,\alpha^*)= \inf_{\alpha \in \sA}\ J(X_0,\alpha)
 \quad \Rightarrow \quad
J_g(X_0, \alpha^*, \nu^*)=
 \inf_{\alpha \in  \sA}\ J_g(X_0,\alpha, \nu^*),
$$
where $J$ is as in \eqref{eq:MFC-cost-ext}, and $J_g$ is as in \eqref{eq:MFG-cost-ext}.
\end{Thm}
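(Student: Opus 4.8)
\textbf{Proof strategy for Theorem \ref{thm:pot-game-law-dep}.}
The plan is to mimic the proof of Theorem \ref{thm:pot_game_finite}, transporting the variational argument to the law-dependent setting, with the invertibility function $\varphi$ playing the role that makes the first-order condition computable. First I would fix a minimizer $\alpha^*$ of the MFC problem \eqref{eq:MFC-cost-ext} on the given basis $\gamma$, set $\mu^* = \Pi_\sharp\nu^*$ with $\nu^*_t = \cL((X^{\alpha^*,\nu^*}_{\cdot\land t},\alpha^*_t)\,|\,\sG)$, and observe that $X^{\alpha^*,\mu^*}$ coincides with the McKean-Vlasov solution $X^{\alpha^*}$ of \eqref{eq:state-ext}, since along the optimal flow the frozen measure $\mu^*$ equals the true conditional law. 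The key device, exactly as in the separable case, is to perturb the control on an enlarged basis: given any competitor $\beta\in\sA$ and $\eps\in(0,1)$, augment $\gamma$ by an independent Bernoulli variable $B_\eps$ with $\Prob(B_\eps=1)=\eps$ and consider $\alpha^\eps := \alpha^* + B_\eps(\beta-\alpha^*)$ (or the analogous "switch to $\beta$ on the event $B_\eps=1$" construction). Invariance under probability bases guarantees that $J(X_0,\alpha^\eps)\ge J(X_0,\alpha^*) = V(\cL(X_0))$ on the enlarged basis, so $\frac{d}{d\eps}\big|_{\eps=0^+} J(X_0,\alpha^\eps)\ge 0$, and the whole argument reduces to computing this right-derivative.

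The main computation is the differentiation of $\eps\mapsto J(X_0,\alpha^\eps)$ at $0$. Here two effects combine: the linear derivatives $\delta_\mu F$, $\delta_\mu G$ contribute the interaction terms $\int \delta_\mu F(t,\wt x,\wt a,\mu^*)(x)\,\nu^*_t(\d\wt x,\d\wt a)$ and likewise for $G$ — this is standard and identical to Theorem \ref{thm:pot_game_finite} — while the novelty is the dependence of the state process on $\mu^\alpha$ through the drift $b$. Perturbing $\alpha^*$ perturbs $X^{\alpha^\eps}$ both directly (through $b(s,\cdot,\alpha^\eps_s,\cdot)$) and through the shift in the conditional law $\mu^{\alpha^\eps}_s$; the chain rule for the law-dependence produces exactly a term of the form $\nabla_a F(t,\wt x,\wt a,\mu^*)\cdot(\delta_\mu\varphi)(t,\wt x,b(t,\wt x,\wt a,\mu^*),\mu^*)(x)$ after one uses $\varphi(t,x,b(t,x,a,\mu),\mu)=a$ to re-express the variation of the drift-image in terms of a variation of the control. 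This is precisely why the extra third term appears in $f$ in \eqref{eq:MFG-cost-functionals-ext}. Concretely, I would write $X^{\alpha^\eps}_t - X^{\alpha^*}_t = \eps\, Y_t + o(\eps)$ for a first-variation process $Y$ solving a linear SDE, identify the linear functional $\beta\mapsto \frac{d}{d\eps}|_0 J$, and recognize it as $\frac{d}{d\eps}|_0 J_g(X_0,\alpha^\eps,\nu^*)$ with $(f,g)$ as defined — i.e. the two problems have the same first-order variation at $\alpha^*$.

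Having matched the derivatives, convexity is \emph{not} available in general, so instead of concluding directly I would use the Bernoulli-perturbation structure to get a genuine inequality rather than just a stationarity condition: because $B_\eps$ is a $0/1$ switch, $J(X_0,\alpha^\eps)$ is actually \emph{affine} in $\eps$ up to the measure-dependence, and a careful accounting (this is the standard trick in \cite{carmona_probabilistic_2018-1}, Ch.~6, and in the proof of Theorem \ref{thm:pot_game_finite}) shows $J(X_0,\alpha^\eps) = (1-\eps)J_g(X_0,\alpha^*,\nu^*) + \eps J_g(X_0,\beta,\nu^*) + o(\eps)$, so that $\frac{d}{d\eps}|_{0^+}J(X_0,\alpha^\eps)\ge0$ becomes exactly $J_g(X_0,\beta,\nu^*)\ge J_g(X_0,\alpha^*,\nu^*)$ for every $\beta\in\sA$, which is the claim. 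The quadratic-growth and continuity hypotheses in Assumption \ref{asm:law-dep} are what license dominated convergence in all the $\eps\to0$ passages and the differentiation under the expectation, and the finiteness of $\E^\gamma[\sup_s|X^\alpha_s|^2]$ from the well-posedness of \eqref{eq:state-ext} controls the first-variation process $Y$. The second equilibrium condition, $\nu^*_t=\cL((X^{\alpha^*,\nu^*}_{\cdot\land t},\alpha^*_t)\,|\,\sG)$, holds by the very definition of $\nu^*$ together with the identification $X^{\alpha^*,\mu^*}=X^{\alpha^*}$ noted at the outset.

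\textbf{Expected main obstacle.}
The delicate point is the law-dependence of the dynamics: justifying the first-order expansion $\mu^{\alpha^\eps}_s = \mu^*_s + \eps(\text{linear term}) + o(\eps)$ in $\sP_2(\sD_n)$ and carrying the linear derivative $\delta_\mu$ through it rigorously — in particular verifying that the formal chain-rule computation producing the $\nabla_a F\cdot\delta_\mu\varphi$ term is valid under merely Lipschitz/linear-growth coefficients and quadratic-growth costs, and that all the remainder terms are genuinely $o(\eps)$ uniformly enough. The invertibility assumption on $b$ via $\varphi$ is essential precisely at this step, since without it the variation of the conditional law cannot be pulled back to a variation of the control in a way that matches the MFG cost $f$; handling the regularity required of $\delta_\mu\varphi$ (quadratic growth, measurability, the substitution $y=b(t,\wt x,\wt a,\mu)$) is where the bulk of the technical work in Section \ref{ssec:proof-law-dep} will go.
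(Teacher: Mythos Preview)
Your overall plan (extend the basis by a Bernoulli coin, use invariance to get a nonnegative one-sided derivative, identify the derivative with $J_g(\beta,\nu^*)-J_g(\alpha^*,\nu^*)$) is the same as the paper's, but the mechanism you describe for the law-dependent drift misses the essential construction and, as written, does not produce the cost function $f$ in \eqref{eq:MFG-cost-functionals-ext}.

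The difficulty is that the naive switch $\alpha^\eps=\alpha^*\chi_{\{B_\eps=0\}}+\beta\chi_{\{B_\eps=1\}}$ does \emph{not} give a state process that splits as $X^{\beta,\mu^*}\otimes X^{\alpha^*}$: on each Bernoulli slice the drift still carries the mixed measure $\mu^{\alpha^\eps}$, so both components depend on $\eps$ through a McKean--Vlasov fixed point. If you linearize this, you get a coupled system (first-variation process $Y$ \emph{and} a derivative $\dot\mu$ that feeds back into $Y$), and the resulting first-order condition contains $\partial_x F$-terms coming from the state variation, not the $\nabla_a F\cdot\delta_\mu\varphi$ term that appears in $f$. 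Your sentence ``the chain rule for the law-dependence produces exactly a term of the form $\nabla_a F\cdot\delta_\mu\varphi$ after one uses $\varphi(\cdot,b(\cdot),\cdot)=a$'' is where the argument breaks: nothing in a state-variation computation differentiates $F$ in its control slot. Also, the pathwise expansion $X^{\alpha^\eps}-X^{\alpha^*}=\eps Y+o(\eps)$ is not compatible with the Bernoulli switch, where on $\{B_\eps=1\}$ the difference is $O(1)$.

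The paper's trick is to use $\varphi$ not to ``re-express'' a variation but to \emph{construct} the perturbed control. On the extended basis $\gamma^\delta$ one sets
\[
\beta^\delta_t:=\varphi\bigl(t,\,X_{\cdot\land t}\otimes X^*_{\cdot\land t},\;b(t,X_{\cdot\land t},\alpha_t,\mu^*_t)\otimes b(t,X^*_{\cdot\land t},\alpha^*_t,\mu^*_t),\;\mu^\delta_t\bigr),
\]
where $X:=X^{\alpha,\mu^*}$ is the \emph{MFG} state with the frozen flow $\mu^*$ and $\mu^\delta:=\delta\,\cL(X\,|\,\sG)+(1-\delta)\mu^*$. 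The left-inverse property forces the drift along $X\otimes X^*$ with control $\beta^\delta$ and measure $\mu^\delta$ to equal $b(t,X,\alpha_t,\mu^*)\otimes b(t,X^*,\alpha^*_t,\mu^*)$, so $X\otimes X^*$ solves the McKean--Vlasov equation on $\gamma^\delta$ and is \emph{independent of $\delta$}. There is no first-variation process at all; the only $\delta$-dependence sits in the control $\beta^\delta$ and in the measure argument of $F$. Differentiating the running cost then gives $\nabla_a F\cdot\partial_\delta\beta^\delta+\delta_\mu F\cdot(\mu-\mu^*)$, and $\partial_\delta\beta^\delta=\delta_\mu\varphi\cdot(\mu-\mu^*)$ by the chain rule for $\varphi$. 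This is exactly how the third term of $f$ appears, and the rest follows the template of Lemma \ref{lem:pot_game_finite}. Once you have this construction, the ``expected main obstacle'' you flag (expanding $\mu^{\alpha^\eps}$ in $\sP_2$) simply disappears.
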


\begin{Rmk}
{\rm An inspection of our proof in Section \ref{ssec:proof-law-dep} shows that the dependence on $(\alpha,\mu)$ could be through any of the coefficients $\sigma,\sigma^0,\lambda,\lambda^0$ instead of the drift $b$ as in \eqref{eq:state-ext}. Then, an analogous result to Theorem \ref{thm:pot-game-law-dep} holds provided the controlled coefficient satisfies an invertibility condition, and the other coefficients are independent of $(\alpha,\mu)$. 
}
\end{Rmk}

\section{Proof of Theorem \ref{thm:pot_game_finite}}
\label{sec:proof}

The main idea is to compose the optimal control $\alpha^*$
with an arbitrary control $\alpha$ through a biased independent coin-flip.
To achieve this, we extend the original probabilistic structure by adding an initial Bernoulli
random variable, and then we use either one of the controls
depending on its outcome.  This provides linearity
in the distribution variables and we obtain the result
by essentially differentiating with respect to the success probability
of the coin-flip.  The assumed invariance principle
ensures that $\alpha^*$ is still a minimizer
of the extended problem.

\subsection{Preliminaries}
\label{ssec:pre}

We first extend the probability basis $\gamma$ 
to $\hat \gamma$ as follows,
\begin{equation*}
    \hat \Omega := \Omega \times \{0,1\}, \quad  
    \hat\sF := \sF \otimes \sF^\diamond, \quad 
    \hat \P := \P \otimes \tfrac12 (\delta_{\{0\}}+\delta_{\{1\}}),\quad  
    \hat \sF_t :=\sF_t \otimes \sF^\diamond,
\end{equation*}
where $\sF^\diamond$ is the power set of $\{0,1\}$.
We naturally extend any function $\zeta$ on 
$\Omega$ to a function on $\hat\Omega$ by setting 
$\hat  \zeta (\omega,x):= \zeta (\omega)$, 
$(\omega,x) \in \hat \Omega$.
Then, $\hat \gamma := (\hat \Om, \hat \sF, \hat \P,
\hat \F, \hat \sW, \hat \sN)$ is a probability basis.  
Additionally, the $\sigma$-algebra
$\hat \sG$ generated by the common noise 
$(\hat W^0, \hat N^0)$ is given by
$$
\hat \sG = \sG \otimes \sG^\diamond,
$$
where $\sG^\diamond:=\{\emptyset,\{0,1\}\}$ is the trivial $\sigma$-algebra.
Given  any two functions $\zeta,\eta$  on $\Om$, 
we define a new function $\zeta \otimes \eta$ on $\hat \Om$ by,
$$
(\zeta \otimes \eta)(\om,x):= \zeta(\om) \chi_{\{x=0\}}+
\eta(\om) \chi_{\{x=1\}},
\qquad (\omega,x) \in \hat \Om.
$$
Then, for any $Z=Z(\omega,x)$ we have
 $Z=Z^{(0)} \otimes Z^{(1)}$ where $Z^{(x)}(\om)=Z(\om,x)$.

Given $\delta \in (0,1)$, we define a probability measure
$\Pd$ equivalent to $\hat \P$ by,
\begin{equation}
\label{eq:pdelta}
\Ed [Z] = \E^{\Pd}[Z]:= \delta \, \E[Z^{(0)}] + 
(1-\delta) \, \E[Z^{(1)}].
\end{equation}
This defines a family of probability bases $\gamma^\delta = (\hat \Om, \hat \sF, \P^\delta,
\hat \F, \hat \sW, \hat \sN)$ indexed by $\delta \in (0,1)$. We note that
$\hat \sA := \sA(\hat \gamma) = \sA(\gamma^\delta)$.
For $\beta \in \hat \sA$, the projected controls
$\beta^{(0)}, \beta^{(1)}$ are in $\sA$ and by definition 
$\beta_t =  \beta^{(0)}_t \otimes \beta^{(1)}_t$.
Let $Y^\beta$ be the solution to 
$\mathrm{SDE}^{\hat\gamma}(\hat X_0, \beta)$, and $X^{(0)}:= X^{\beta^{(0)}}$,
$X^{(1)}:= X^{\beta^{(1)}}$ be the controlled state processes
solving the equation \ref{eq:state} with controls
$\beta^{(0)}$ and $\beta^{(1)}$, respectively.
Further, we set,
\begin{equation*}
    \nu^{\beta,\delta}_t 
    := \cL^{\Pd}((Y^\beta_{\cdot\land t}, \beta_t)\,|\, \hat \sG\, ),\qquad
    \nu^{(x)}_t := \cL^{\P}((X^{(x)}_{\cdot\land t}, \beta^{(x)}_t)\,|\, \sG\, ),\qquad x=0,1.
\end{equation*}
Since $\hat \sG=\sG \otimes \sG^\diamond$, 
$\nu^{\beta,\delta}$ is identified with a function on $\Om$.
 
We have the following immediate consequences of the definitions.

\begin{Lem} 
\label{lem:additive}
For all $t \in [0,T]$, a.s., 
$Y_t^\beta= X_t^{(0)} \otimes X_t^{(1)}$
and $\nu^{\beta,\delta}_t = \delta \nu^{(0)}_t 
+ (1-\delta)  \nu^{(1)}_t$.
\end{Lem}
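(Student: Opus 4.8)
The plan is to prove Lemma~\ref{lem:additive} directly from the definitions of the extended probabilistic structure, exploiting the fact that the SDE coefficients in \ref{eq:state} contain no mean field terms, so the dynamics on the fiber $\{x=0\}$ and on the fiber $\{x=1\}$ decouple completely. First I would observe that on $\hat\Omega = \Omega\times\{0,1\}$ the extended noises $\hat W$, $\hat N$ and the initial condition $\hat X_0$ are, by construction, constant in the coin-flip coordinate $x$, and the control $\beta$ satisfies $\beta_t = \beta^{(0)}_t\otimes\beta^{(1)}_t$, i.e.\ $\beta_t(\omega,x) = \beta^{(x)}_t(\omega)$. Plugging the ansatz $Y_t = X_t^{(0)}\otimes X_t^{(1)}$ into $\mathrm{SDE}^{\hat\gamma}(\hat X_0,\beta)$ and restricting to the event $\{x=i\}$, every integrand reduces to the corresponding integrand of \ref{eq:state} driven by $\beta^{(i)}$, because $b,\boldsymbol\sigma,\boldsymbol\lambda$ do not depend on the measure argument; hence $X^{\beta^{(i)}}$ solves the restricted equation. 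By the pathwise uniqueness from Theorem~\ref{thm:state} (applied on $\hat\gamma$, whose coefficients still satisfy Assumption~\ref{asm:state}), the solution $Y^\beta$ coincides $\hat\P$-a.s.\ with $X^{(0)}\otimes X^{(1)}$; since $\P^\delta\sim\hat\P$ this also holds $\P^\delta$-a.s. One should be slightly careful that the stochastic integrals are computed with respect to the same filtration $\hat\F_t = \sF_t\otimes\sF^\diamond$ on both fibers, but since $\sF^\diamond$ is adjoined at time $0$ this causes no issue: predictability is preserved and the stochastic integrals restricted to $\{x=i\}$ agree with the original ones.

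For the measure identity, I would unwind the definition $\nu^{\beta,\delta}_t = \cL^{\P^\delta}((Y^\beta_{\cdot\wedge t},\beta_t)\mid\hat\sG)$ using $\hat\sG = \sG\otimes\sG^\diamond$ and the product-plus-mixture structure of $\P^\delta$ in \eqref{eq:pdelta}. Concretely, for a bounded measurable test function $h$ on $\sD_n\times U$ and a bounded $\sG$-measurable $\Xi$, I would compute $\E^{\P^\delta}[\,h(Y^\beta_{\cdot\wedge t},\beta_t)\,\hat\Xi\,]$ by splitting over $\{x=0\}$ and $\{x=1\}$: on $\{x=0\}$ the pair $(Y^\beta_{\cdot\wedge t},\beta_t)$ equals $(X^{(0)}_{\cdot\wedge t},\beta^{(0)}_t)$ and the $\P^\delta$-weight is $\delta$ times the $\P$-expectation, and symmetrically on $\{x=1\}$ with weight $(1-\delta)$. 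This yields
\[
\E^{\P^\delta}\!\big[h(Y^\beta_{\cdot\wedge t},\beta_t)\,\hat\Xi\big]
= \delta\,\E^{\P}\!\big[h(X^{(0)}_{\cdot\wedge t},\beta^{(0)}_t)\,\Xi\big]
+ (1-\delta)\,\E^{\P}\!\big[h(X^{(1)}_{\cdot\wedge t},\beta^{(1)}_t)\,\Xi\big],
\]
and replacing the inner expectations by $\E^\P[\langle h,\nu^{(i)}_t\rangle\,\Xi]$ (by the defining property of the regular conditional law $\nu^{(i)}_t = \cL^\P((X^{(i)}_{\cdot\wedge t},\beta^{(i)}_t)\mid\sG)$) shows that $\delta\nu^{(0)}_t + (1-\delta)\nu^{(1)}_t$, viewed as a $\sG$-measurable random measure, satisfies the characterizing property of $\nu^{\beta,\delta}_t$ against all such test pairs $(h,\Xi)$. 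Since $\sD_n\times U$ is Polish, the regular conditional distribution is a.s.\ unique, so the two random measures agree $\P^\delta$-a.s., which under the identification of $\hat\sG$-measurable objects with functions on $\Omega$ gives the claimed pathwise identity.

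I do not expect a genuine obstacle here — this is a bookkeeping lemma — but the point requiring the most care is the bookkeeping of the filtration and the measurability/progressivity of $\nu^{\beta,\delta}$: one must confirm that $\nu^{(0)}$ and $\nu^{(1)}$ are $\F$-progressive (Lemma~\ref{lem:technical}) so that their convex combination is, and that the identification of $\hat\sG$-measurable maps with $\sG$-measurable maps on $\Omega$ is consistent with this progressivity on $\hat\gamma$. A secondary subtlety is that the regular conditional law is only defined up to $\P^\delta$-null sets, so all equalities should be read in the almost-sure sense, which is exactly how the lemma is phrased. I would keep the write-up short, stating the uniqueness-of-strong-solution argument for the first identity in one or two sentences and the test-function computation above for the second.
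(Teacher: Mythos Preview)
Your proposal is correct and follows essentially the same route as the paper: for the first identity you verify that $X^{(0)}\otimes X^{(1)}$ solves $\mathrm{SDE}^{\hat\gamma}(\hat X_0,\beta)$ fiberwise and invoke strong uniqueness, and for the second identity you split over the coin-flip coordinate and use the product structure $\hat\sG = \sG\otimes\sG^\diamond$ together with the independence of $\xi$ from the $\Omega$-variables. The paper carries out the first step by explicitly decomposing a representative stochastic integral and the second step by testing against indicator sets rather than general bounded $h$, but these are cosmetic differences.
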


\begin{proof}
First consider the It\^{o}-integral,
\begin{align*}
I_t(\beta)(\om,x) &:= (\int_0^t \sigma(s,Y^\beta_{\cdot\land s},  \beta_s) \, \d \hat W_s)(\om,x)\\
&= (\int_0^t \sigma(s,X^{(0)}_{\cdot\land s},  \alpha^{(0)}_s)\chi_{\{x=0\}} \, \d \hat W_s)(\om,x)
+ (\int_0^t \sigma(s,X^{(1)}_{\cdot\land s},  \alpha^{(1)}_s) \chi_{\{x=1\}} \, \d \hat W_s)(\om,x)\\
&= (\int_0^t \sigma(s,X^{(0)}_{\cdot\land s},  \alpha^{(0)}_s) \, \d W_s)(\om) \chi_{\{x=0\}}
+ (\int_0^t \sigma(s,X^{(1)}_{\cdot\land s},  \alpha^{(1)}_s) \, \d W_s)(\om) \chi_{\{x=1\}}\\
&=: I_t(\alpha^{(0)})(\om)\chi_{\{x=0\}} +I_t(\alpha^{(1)})(\om)\chi_{\{x=1\}},\quad \text{a.s.},
\end{align*}
where the final equality follows from the construction of the stochastic integral.
Hence, $I_t (\beta)= I_t (\alpha^{(0)}) \otimes I_t (\alpha^{(1)})$ a.s.  A similar computation of all other four integrals in 
$\mathrm{SDE}^{\hat\gamma}(\hat X_0, \beta)$ implies
that $X^{(0)} \otimes X^{(1)}$ is a solution
of  $\mathrm{SDE}^{\hat\gamma}(\hat X_0, \beta)$. 
Then by strong uniqueness, we conclude that $Y^\beta=X^{(0)} \otimes X^{(1)}$.

Let $\xi :(\om,x)\in\hat \Om \to \xi(\om,x)=x$ be the canonical projection.
For $t\in[0,T]$ and a Borel set $S\subset \sD_n\times U$, 
the followings hold a.s.,
\begin{align*}
\nu^{\beta,\delta}_t(S) &= \P^\delta((Y^\beta_{\cdot\land t},\beta_t)\in S\,|\,
\hat \sG ) \\
&= \Pd((Y^\beta_{\cdot\land t},\beta_t)\in S 
\ \text{and}\ \xi =0
\,|\, \hat \sG)+
 \Pd((Y^\beta_{\cdot\land t},\beta_t)\in S 
\ \text{and}\  \xi =1 \,|\, \hat \sG)\\
&= \Pd((X^{(0)}_{\cdot\land t},\alpha^{(0)}_t)\in S 
\ \text{and}\  \xi =0
\,|\, \hat \sG)+
\Pd ((X^{(1)}_{\cdot\land t},\alpha^{(1)}_t)\in S 
\ \text{and}\  \xi =1
\,|\, \hat \sG).
\end{align*}
Since the canonical map $\xi$ is independent
of all random elements defined on 
the original probability space $\Om$
and $\hat \sG = \sG \otimes \sG^\diamond$, a.s.,
\begin{align*}
    \nu^{\beta,\delta}_t(S)(\om,x) &=
    \Pd(\xi=0)\ \P((X^{(0)}_{\cdot\land t},\alpha^{(0)}_t)\in S 
    \,|\, \sG ) +
    \Pd(\xi=1)\ \P((X^{(1)}_{\cdot\land t},\alpha^{(1)}_t)\in S 
    \,|\, \sG) \\
    &= 
    \delta \, \nu^{(0)}_t(S)(\om) +
    (1-\delta)\,  \nu^{(1)}_t(S)(\om).
\end{align*}
\end{proof}
For a given $Y_0\in \sL^2(\hat \sF_0)$, $\delta \in (0,1)$,
and a $\sP_2(\sD_n\times U)$-valued process $\eta$ on $\Om$, set
\begin{align*}
\cJ_\delta(Y_0,\beta,\eta)&:= 
\E^\delta \left[ \int_0^T F(t,Y^\beta_{\cdot\wedge t},\beta_t, \hat \eta_t)\ \d t
+ G(Y^\beta_{\cdot\wedge T},\hat \eta_T) \right],\\
\cJ(X_0,\alpha,\eta)&:= 
\E \left[ \int_0^T F(t,X^\alpha_{\cdot\wedge t},\alpha_t,  \eta_t)\ \d t
+ G(X^\alpha_{\cdot\wedge T}, \eta_T) \right].
\end{align*}
Then, the MFC problem on $\gamma^\delta$ is to minimize
$J_\delta(Y_0,\beta) := \cJ_\delta (Y_0,\beta,\nu^{\beta,\delta})$
over all $\beta \in \hat \sA$.
Moreover, in view of \eqref{eq:pdelta},
\begin{equation}
\label{eq:deltaJ}
J_\delta(Y_0,\beta) = \delta \ \cJ(Y_0^{(0)}, \beta^{(0)}, \nu^{\beta,\delta})
+ (1-\delta) \ \cJ(Y_0^{(1)}, \beta^{(1)}, \nu^{\beta,\delta}).\,
\end{equation}
Additionally, as $\cL^{\P^\delta}(\hat X_0) = \cL^\P(X_0)$,
the assumed invariance principle implies that
\begin{equation}
\label{eq:invariance}
\inf_{\beta \in \hat \sA} J_\delta(\hat X_0 ,\beta)
= \inf_{\alpha \in \sA} J(X_0 , \alpha).
\end{equation}

\subsection{Completion of the Proof}
\label{ssec:conclude}

Let $\alpha^*\in\sA$ be a minimizer
of MFC \eqref{eq:MFC_cost} starting from
initial condition $X_0\in \sL^2(\sF_0)$, and 
let $\alpha\in\sA$ be an arbitrary control process. 
To simplify the presentation, we write
 $X^*=X^{\alpha^*}$ for the optimal state process
 and 
 $$
 \nu^\alpha_t:= \cL^{\alpha}_t=\cL((X^\alpha_{\cdot \wedge t},\alpha_t)\, |\, \sG),\qquad
  \nu^*_t:= \cL^{\alpha^*}_t=\cL((X^*_{\cdot \wedge t},\alpha^*_t)\, |\, \sG).
  $$
Set  $\beta := \alpha \otimes \alpha^*$,
$Y_0= X_0 \otimes X_0 =\hat X_0$, so that
$\beta^{(0)}= \alpha$, $\beta^{(1)}=\alpha^*$,
$Y_0^{(0)}=Y_0^{(1)}=X_0$, and by Lemma \ref{lem:additive},
$ \nu^{\beta,\delta}_t = \delta \nu^\alpha_t +(1-\delta)\nu^*_t$.
Moreover, by \eqref{eq:invariance}, 
$J(X_0,\alpha^*) \le J_\delta(Y_0,\beta)=J_\delta(\hat X_0,\beta)$.

 The following  calculation is central to the proof.
\begin{Lem}
\label{lem:pot_game_finite}
Let $J_g$ be as in \eqref{eq:MFG-cost}. Then,
\begin{equation*}
   0\le  \lim_{\delta\downarrow0} \frac{1}{\delta}
    \left(J_\delta(\hat X_0,\beta) - J(X_0,\alpha^*)\right)
    = J_g(X_0,\alpha,\nu^*) - J_g(X_0,\alpha^*,\nu^*).
\end{equation*}
In particular, $\alpha^*$ minimizes $J_g(X_0,\alpha,\nu^*)$ over all $\alpha \in \sA$.
\end{Lem}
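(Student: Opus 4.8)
The plan is to expand $J_\delta(\hat X_0,\beta)-J(X_0,\alpha^*)$ explicitly, divide by $\delta$, pass to the limit, and recognise the limit as $J_g(X_0,\alpha,\nu^*)-J_g(X_0,\alpha^*,\nu^*)$. Since $\nu^{\beta,\delta}_t=\nu^*_t+\delta(\nu^\alpha_t-\nu^*_t)$ by Lemma~\ref{lem:additive} and $J(X_0,\alpha^*)=\cJ(X_0,\alpha^*,\nu^*)$ because $\nu^*=\nu^{\alpha^*}$, identity \eqref{eq:deltaJ} yields
\[
\tfrac1\delta\big(J_\delta(\hat X_0,\beta)-J(X_0,\alpha^*)\big)=\big[\cJ(X_0,\alpha,\nu^{\beta,\delta})-\cJ(X_0,\alpha^*,\nu^{\beta,\delta})\big]+\tfrac1\delta\big[\cJ(X_0,\alpha^*,\nu^{\beta,\delta})-\cJ(X_0,\alpha^*,\nu^*)\big].
\]
Non-negativity of the left-hand side is immediate from \eqref{eq:invariance}: $J_\delta(\hat X_0,\beta)\ge\inf_{\beta'\in\hat\sA}J_\delta(\hat X_0,\beta')=J(X_0,\alpha^*)$. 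For the first bracket, I would use continuity of $F(t,x,a,\cdot)$ and $G(x,\cdot)$ together with the quadratic growth of Assumption~\ref{asm:cost}, the second-moment bounds on $X^\alpha,X^{\alpha^*}$ from Theorem~\ref{thm:state}, square integrability of the controls, and the elementary bound $\fs(\nu^{\beta,\delta}_t)\le\fs(\nu^\alpha_t)+\fs(\nu^*_t)$ to invoke dominated convergence and obtain the limit $\cJ(X_0,\alpha,\nu^*)-\cJ(X_0,\alpha^*,\nu^*)$. For the second bracket I would apply the linear differentiability identity with increment $\nu^{\beta,\delta}_t-\nu^*_t=\delta(\nu^\alpha_t-\nu^*_t)$, which turns the difference quotient into $\int_0^1\!\int\delta_\nu F(t,X^*_{\cdot\wedge t},\alpha^*_t,\nu^*_t+\tau\delta(\nu^\alpha_t-\nu^*_t))(\tilde x,\tilde a)(\nu^\alpha_t-\nu^*_t)(\d\tilde x,\d\tilde a)\,\d\tau$ and similarly for $G$; continuity of $\delta_\nu F,\delta_\nu G$ in the measure argument plus the same growth/domination bookkeeping (now uniform in $\tau\in[0,1]$) gives the limit
\[
\E\!\left[\int_0^T\!\!\int\delta_\nu F(t,X^*_{\cdot\wedge t},\alpha^*_t,\nu^*_t)(\tilde x,\tilde a)(\nu^\alpha_t-\nu^*_t)(\d\tilde x,\d\tilde a)\,\d t+\int\delta_\nu G(X^*_{\cdot\wedge T},\nu^*_T)(\tilde x,\tilde a)(\nu^\alpha_T-\nu^*_T)(\d\tilde x,\d\tilde a)\right].
\]

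The decisive step is to identify this derivative term with the difference of the interaction parts of $J_g$ coming from \eqref{eq:pot-game-cost}, and for this I would condition on $\sG$. Fix $t$ and write $D(p,q):=\delta_\nu F(t,p,\nu^*_t)(q)$, a function depending on $\omega$ only through the $\sG$-measurable measure $\nu^*_t$. Since $\nu^\alpha_t-\nu^*_t$ is $\sG$-measurable, a conditional Fubini argument together with $\nu^*_t=\cL((X^*_{\cdot\wedge t},\alpha^*_t)\,|\,\sG)$ gives
\[
\E\!\left[\int D\big((X^*_{\cdot\wedge t},\alpha^*_t),q\big)(\nu^\alpha_t-\nu^*_t)(\d q)\,\Big|\,\sG\right]=\int\!\!\int D(p,q)\,\nu^*_t(\d p)\,(\nu^\alpha_t-\nu^*_t)(\d q),
\]
while applying the disintegration instead to $(X^\alpha_{\cdot\wedge t},\alpha_t)$ and $(X^*_{\cdot\wedge t},\alpha^*_t)$ gives
\[
\E\!\left[\int D\big(p,(X^\alpha_{\cdot\wedge t},\alpha_t)\big)\nu^*_t(\d p)-\int D\big(p,(X^*_{\cdot\wedge t},\alpha^*_t)\big)\nu^*_t(\d p)\,\Big|\,\sG\right]=\int\!\!\int D(p,q)\,\nu^*_t(\d p)\,(\nu^\alpha_t-\nu^*_t)(\d q),
\]
which is the same double integral by Fubini for the product measure. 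Taking expectations, integrating in $t$, and treating the terminal term the same way (using the footnoted fact that $G$ sees the measure only through its first marginal, so $\delta_\nu G$ reduces to $\delta_\mu G$), the derivative term equals $\E[\int_0^T(\int\delta_\nu F(t,\tilde x,\tilde a,\nu^*_t)(X^\alpha_{\cdot\wedge t},\alpha_t)\nu^*_t(\d\tilde x,\d\tilde a)-\int\delta_\nu F(t,\tilde x,\tilde a,\nu^*_t)(X^*_{\cdot\wedge t},\alpha^*_t)\nu^*_t(\d\tilde x,\d\tilde a))\d t+(\text{terminal})]$. Adding back $\cJ(X_0,\alpha,\nu^*)-\cJ(X_0,\alpha^*,\nu^*)=\E[\int_0^T(F(t,X^\alpha_{\cdot\wedge t},\alpha_t,\nu^*_t)-F(t,X^*_{\cdot\wedge t},\alpha^*_t,\nu^*_t))\d t+\cdots]$ reconstitutes exactly $J_g(X_0,\alpha,\nu^*)-J_g(X_0,\alpha^*,\nu^*)$, which proves the displayed identity; combined with the non-negativity this gives $J_g(X_0,\alpha^*,\nu^*)\le J_g(X_0,\alpha,\nu^*)$ for every $\alpha\in\sA$, i.e.\ the asserted minimality.

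I expect the conceptual crux to be the conditional-Fubini identity, but it is technically short once the measurability of $\delta_\nu F,\delta_\nu G$ from Assumption~\ref{asm:cost} is in hand. The genuinely delicate bookkeeping will instead be the uniform domination needed to interchange the limit $\delta\downarrow0$ with the expectation and the $\tau$- and $t$-integrals: one must bound $\nu^*_t+\tau\delta(\nu^\alpha_t-\nu^*_t)$ in $\sP_2(\sD_n\times U)$ uniformly in $(\delta,\tau,t)$ and assemble an integrable dominating function out of the quadratic-growth bounds, the state estimates of Theorem~\ref{thm:state}, and $\|\alpha\|_{2,T},\|\alpha^*\|_{2,T}<\infty$. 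Everything else — the algebra, the passage to the directional derivative, and the final comparison — is routine.
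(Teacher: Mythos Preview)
Your proposal is correct and follows essentially the same route as the paper: the same decomposition into $\cI_{1,\delta}+\cI_{2,\delta}$ via \eqref{eq:deltaJ}, the same passage to the limit using continuity and linear differentiability, and the same conditional-Fubini swap (conditioning on $\sG$ and using $\nu^*_t=\cL((X^*_{\cdot\wedge t},\alpha^*_t)\,|\,\sG)$ to exchange the roles of the inner and outer integration variables). The paper is somewhat terser on the domination bookkeeping you flag, simply invoking the growth conditions of Assumption~\ref{asm:cost} and Theorem~\ref{thm:state} to justify Fubini at the outset.
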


\begin{proof} 
We first note that the growth conditions in Assumption \ref{asm:cost} and the second moment estimate of the controlled state process in Theorem \ref{thm:state} justify the use of Fubini's theorem in the rest of this proof. 

By the definitions of $J_\delta, \cJ_\delta$, and $\cJ$,
\begin{align*}
J_\delta(\hat X_0,\beta) 
& =  \cJ_\delta(\hat X_0,\beta, \nu^{\beta,\delta}) 
= \delta \cJ(X_0,\alpha,\nu^{\beta,\delta}) +
 (1-\delta) \cJ(X_0,\alpha^*,\nu^{\beta,\delta}).
\end{align*}
Then, by   \eqref{eq:deltaJ},
$$
 \frac{1}{\delta}
    \left(J_\delta(\hat X_0,\beta) - J(X_0,\alpha^*)\right) =
    \cI_{1,\delta} +  \cI_{2,\delta},\quad \text{where}
$$
$$
  \cI_{1,\delta} :=     \cJ(X_0,\alpha,\nu^{\beta,\delta}) 
  -  \cJ(X_0,\alpha^*,\nu^{\beta,\delta}),\quad
    \cI_{2,\delta} :=    \frac{1}{\delta}
    \left(  \cJ(X_0,\alpha^*,\nu^{\beta,\delta}) -  J(X_0,\alpha^*)\right).
$$
Since $\nu^{\beta,\delta}$ converges to $\nu^*$ as $\delta$ tends to zero,
\begin{align}
\label{eq:1}
\lim_{\delta \downarrow 0}\ \cI_{1,\delta}  
&=   \cJ(X_0,\alpha,\nu^*) -  \cJ(X_0,\alpha^*,\nu^*)\\
\nonumber &= \E\left[\int_0^T \left[F(t,X^\alpha_{\cdot \wedge t},\alpha_t,\nu^{*}_t)
- F(t,X^*_{\cdot \wedge t},\alpha^*_t,\nu^{*}_t)\right] \d t 
+ G(X^\alpha_{\cdot \wedge T},\nu^{*}_T)
- G(X^*_{\cdot \wedge T}, \nu^{*}_T) \right].
\end{align}
To analyze the limit of $\cI_{2,\delta}$ we first observe that by  Lemma \ref{lem:additive}, a.s.,
\begin{align*}          
\lim_{\delta \downarrow 0} 
 \frac{1}{\delta} [F(t, &\,X^*_{\cdot \wedge t}, \alpha^*_t, \nu^{\beta,\delta}_t) 
    - F(t, X^*_{\cdot \wedge t}, \alpha^*_t,\nu^*_t)]\\
    =& \lim_{\delta \downarrow 0} \ 
      \int_0^1 \int_{\sD_n \times U} \pmu F(t,X^*_{\cdot \wedge t}, \alpha^*_t,
    \tau \delta \nu^\alpha_t+(1-\tau \delta)\nu^*_t)(x, a) \, (\nu^\alpha_t-\nu^*_t)(\d x, \d a) \ \d \tau\\
    = & \int_{\sD_n \times U} \pmu F(t,X^*_{\cdot \wedge t}, \alpha^*_t,
  \nu^*_t)(x,a) \, (\nu^\alpha_t-\nu^*_t)(\d x, \d a) .
\end{align*}
We use the definition $\nu^*_t = \cL((X^*_{\cdot \wedge t},\alpha^*_t)\, |\, \sG )$ 
and well-known properties of the conditional expectations, to compute:
\begin{align*}
    &\E\left[\int_{\sD_n\times U} 
    \pmu F(t, X^*_{\cdot\land t}, \alpha^*_t, \nu^*_t)(x,a)\, \nu^*_t(\d x, \d a) \right] \\
    &\hspace{50pt}=  \E\left[\E\left[\int_{ \sD_n\times U} 
    \pmu F (t, X^*_{\cdot\land t}, \alpha^*_t, \nu^*_t)(x,a)\, \nu^*_t(\d x, \d a) \, \Bigl| \, \sG\,  \right]\right] \\
     &\hspace{50pt}= 
     \E\left[\int_{ \sD_n\times U} \int_{ \sD_n\times U}
     	\pmu F (t,\tilde{x}, \tilde{a},\nu^*_t)(x,a)\, \nu^*_t(\d x, \d a)\,\nu^*_t(\d \tilde{x},\d \tilde{a}) \right] \\
     &\hspace{50pt}=\E\left[\int_{ \sD_n\times U} \int_{ \sD_n\times U} 
     	\pmu F (t,\tilde{x},\tilde{a}, \nu^*_t)(x,a)\, \nu^*_t(\d \tilde{x},\d \tilde{a})\, \nu^*_t(\d x, \d a) \right] \\
     &\hspace{50pt}= \E\left[\E\left[\int_{ \sD_n\times U} 
     	\pmu F (t,\tilde{x}, \tilde{a},\nu^*_t)(X^*_{\cdot\land t},\alpha^*_t)\, \nu^*_t(\d \tilde{x},\d \tilde{a}) \, \Bigl| \, \sG\, \right] \right] \\
     &\hspace{50pt}= \E\left[\int_{ \sD_n\times U} \pmu 
     F (t,\tilde{x},  \tilde{a}, \nu^*_t)(X^*_{\cdot\land t},\alpha^*_t)\, \nu^*_t(\d \tilde{x},\d  \tilde{a}) \right].
\end{align*}
As $\nu^\alpha_t = \cL((X^\alpha_{\cdot \wedge t},\alpha_t)\, |\, \sG)$,
a computation as above implies that
\begin{align*}
    &\E\left[\int_{\sD_n\times U} 
    \pmu F(t, X^*_t, \alpha^*_t, \nu^*_t)(x,a)\, \nu^\alpha_t(\d x,\d a) \right] \\
    &\hspace{50pt}= 
    \E\left[\int_{ \sD_n\times U} 
    \pmu F (t,\tilde x,  \tilde{a}, \nu^*_t)(X^\alpha_{\cdot\land t},\alpha_t)\, 
    \nu^*_t(\d \tilde x,\d  \tilde{a}) \right].
\end{align*}
Above calculations,  \eqref{eq:1}
and the definition of $f$ in \eqref{eq:pot-game-cost} imply that
\begin{align*}       
 \lim_{\delta \downarrow 0} \ \cI_{1,\delta}+
 \frac{1}{\delta}\ &\E\int_0^T \left[F(t,X^*_{\cdot \wedge t},\alpha^*_t,\nu^{\beta,\delta}_t)
- F(t,X^*_{\cdot \wedge t},\alpha^*_t,\nu^*_t)\right] \d t \\
&\hspace{30pt} =  \E\int_0^T \left[f(t,X^\alpha_{\cdot \wedge t},\alpha_t,\nu^*_t)
- f(t,X^*_{\cdot \wedge t},\alpha^*_t,\nu^*_t)\right] \d t .
\end{align*}
Similarly, with $g$ as in \eqref{eq:pot-game-cost},
$$
\lim_{\delta \downarrow 0}  \ \E\left[G(X^\alpha_{\cdot \wedge T}, \nu^{\beta,\delta}_T)
    - G(X^*_{\cdot \wedge T},\nu^{\beta,\delta}_T)\right]
    =\E\left[ g(X^\alpha_{\cdot \wedge T}, \nu^*_T)- g(X^*_{\cdot \wedge T}, \nu^*_T)\right].
$$
\end{proof}
We have shown that $\alpha^*$ minimizes $J_g(X_0,\alpha,\nu^*)$ and by its
definition $\nu^*_t=\cL((X^*_{\wedge t},\alpha^*_t)\, |\, \sG)$.
Hence, $(\alpha^*,\nu^*)$ is a Nash equilibrium of the MFG of controls.

\hfill $\Box$

\begin{Rmk}
\label{rmk:condition}
{\rm{We use the invariance property only in \eqref{eq:invariance}.
Therefore it suffices to require that the value function on $\gamma$
and its extension to $\hat \gamma$ coincide.
In fact, if the probability basis $\gamma$ is rich 
enough in the sense defined in \cite[Section 2.1.3]{Cosso2023}
and $X_0$ is deterministic,
one can directly prove that the extension to $\hat \gamma$
does not change the value.  

Additionally, if the triplet $(\gamma, X_0,\alpha^*)$
minimizes the `global' MFC problem \eqref{eq:V},
then the above proof implies that $(\alpha^*,\nu^*)$
is a Nash equilibrium on the basis $\gamma$ starting
from $X_0$.}}
\end{Rmk}

\subsection{The Case of Law-Dependent Dynamics}\label{ssec:proof-law-dep}

We assume that the coefficients 
\begin{align*}
b&:[0,T]\times \sD_n\times U \times \sP_2(\sD_n)\to \R^n,\\
(\sigma,\sigma^0)&:[0,T]\times \sD_n\to \sM_{n\times d} \times \sM_{n\times \ell},\\
(\lambda,\lambda^0)&:[0,T]\times \sD_n\times E \to \R^n\times \R^n
\end{align*}
satisfy the following assumption.

\begin{Asm} \label{asm:law-dep}{\rm
$F$ and $G$ satisfy Assumption \ref{asm:cost}. 
The following hold for all $(t,x,x',a, a', \mu, \mu')\in [0,T]\times \sD_n^2\times U^2 \times \sP_2(\sD_n)^2$:
\begin{enumerate}[(i)]

\item (Lipschitz and Linear Growth). The coefficients  $\sigma,\sigma^0,\lambda,\lambda^0$ satisfy Assumption \ref{asm:state}. The drift coefficient $b$ is measurable and satisfies 
\begin{align*}
    |b(t,x,a,\mu)-b(t,x,a,\mu')|&\leq L(\|x-x'\|_t+\fm_2(\mu,\mu')),\\
    |b(t,x,a,\mu)|^2&\leq K (1+\|x\|_t^2  + |a|^2 + \fs(\mu)).
\end{align*}
\item (Left Inverse). There exists a Borel set $B\subset \R^n$ and a measurable function 
$
\varphi:[0,T]\times\sD_n\times B \times \sP_2(\sD_n)\to U
$
such that
$$
\quad \varphi(t,x,b(t,x,a,\mu),\mu) = a.
$$

\item (Differentiability). There exist functions $\nabla_a F$ and $\delta_\mu F$ with 
\begin{align*}
F(t, x ,a,\mu) - F(t, x,a', \mu')  &= 
\int_0^1 \nabla_a F(t,x,a' + \tau (a-a'), \mu') \cdot (a-a')\,\d \tau  \\
&\qquad + \int_0^1\int_{\sD_n} \delta_\mu F(t,x, a', \mu' + \tau (\mu-\mu'))(x)\,(\mu-\mu')(\d x)\,\d \tau.
\end{align*}
Furthermore, $\varphi(t,x,y,\cdot)$ is linearly differentiable a with measurable derivative $\delta_\mu \varphi$. 

\item (Continuity and Growth). The functions $\delta_\mu\varphi(t,x,y,\cdot)(x')$ and $F(t,x,\cdot,\cdot)$ are continuous and satisfy
$$
| \nabla_a F(t,x,a,\mu) | + |\varphi(t,x,y,\mu)|^2 + |\delta_\mu\varphi(t,x,y,\mu)(x')|^2 \leq C(1+\|x\|_t^2+\|x'\|_t^2+|a|^2+|y|^2+\mathfrak{s}(\mu)).
$$
\end{enumerate}}
\end{Asm}

Now fix a probabilistic basis $\gamma$. Assumption \ref{asm:law-dep} in particular guarantees strong existence and uniqueness \ref{eq:state-ext} for any $X_0\in\sL^2(\sF^\gamma_0)$ and $\alpha\in\sA(\gamma)$. Condition (iii) states that the running cost is jointly differentiable in $(a,\mu)$ while condition (ii) is an left invertibility condition on the drift $a\mapsto b(t,x,a,\mu)$. \\

We now prove Theorem \ref{thm:pot-game-law-dep}, and we use the notation and setting introduced in Section \ref{ssec:pre}. 
We fix an arbitrary control $\alpha\in\sA(\gamma)$ and an optimal control $\alpha^*\in\sA(\gamma)$ for the problem \eqref{eq:MFC-cost-ext}. We write
$$
X^* := X^{\alpha^*},\quad \mu^*_t := \cL^{\P}(X^*_{\cdot\land t}\,|\,\sG), 
\quad X := X^{\alpha,\mu^*} , 
\quad \mu_t := \cL^{\P}(X_{\cdot\land t}\,|\,\sG),
$$ 
along with $\mu^\delta := \delta \mu + (1-\delta) \mu^*$, $\delta\in(0,1)$. 
Here, $X^{\alpha,\mu^*}$ refers to the solution of \eqref{eq:MFG-state-law-dep}. 
Define a control $\beta^\delta$ on $\hat\Omega$ by,
$$
\beta^\delta _t := \varphi(t,X_{\cdot\land t}\otimes X^*_{\cdot\land t}, b(t,X_{\cdot\land t},\alpha_t,\mu^*_t) \otimes b(t,X^*_{\cdot\land t},\alpha^*_t,\mu^*_t), \mu^\delta_t).
$$
The growth assumptions on $\varphi$ ensure that $\beta^\delta$ is an admissible control. On the basis $\gamma^\delta$, let $Y^\delta$ denote the solution to \eqref{eq:state-ext} with initial condition $\hat X_0$ and control $\beta^\delta$. With this, we have the following analogue of Lemma \ref{lem:additive}.

\begin{Lem} 
\label{lem:additive-ext}
Fix $\delta\in(0,1)$. On $\gamma^\delta$, we have
$Y^\delta = X\otimes X^*$
and $\mu^\delta_t = \cL^{\P^\delta}(Y^\delta_{\cdot\land t}\,|\,\hat\sG) $.
\end{Lem}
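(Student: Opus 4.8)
The plan is to verify the two identities in Lemma \ref{lem:additive-ext} by the same mechanism used in Lemma \ref{lem:additive}, namely exploiting the product structure of $\hat\gamma$ and the fact that the canonical coin-flip coordinate $\xi(\om,x)=x$ is independent of everything defined on the original basis $\gamma$. First I would unwind the definition of $\beta^\delta_t$: since the arguments of $\varphi$ are themselves of the form $Z^{(0)}\otimes Z^{(1)}$ (the path $X_{\cdot\land t}\otimes X^*_{\cdot\land t}$, the drift values $b(t,X_{\cdot\land t},\alpha_t,\mu^*_t)\otimes b(t,X^*_{\cdot\land t},\alpha^*_t,\mu^*_t)$, and $\mu^\delta_t$, which is $\hat\sG$-measurable hence constant in $x$ only through $\mu^\delta$), evaluating at $x=0$ and $x=1$ gives
\begin{align*}
(\beta^\delta)^{(0)}_t &= \varphi(t,X_{\cdot\land t}, b(t,X_{\cdot\land t},\alpha_t,\mu^*_t), \mu^\delta_t),\\
(\beta^\delta)^{(1)}_t &= \varphi(t,X^*_{\cdot\land t}, b(t,X^*_{\cdot\land t},\alpha^*_t,\mu^*_t), \mu^\delta_t).
\end{align*}
These are \emph{not} obviously equal to $\alpha$ and $\alpha^*$, because the law argument fed to $\varphi$ is $\mu^\delta_t$ rather than $\mu^*_t$, so the left-inverse identity from Assumption \ref{asm:law-dep}(ii) does not apply directly. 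The point, however, is that in the state equation \eqref{eq:state-ext} the control $\beta^\delta_t$ enters \emph{only through} $b(t,\,\cdot\,,\beta^\delta_t,\mu^{Y^\delta}_t)$ where $\mu^{Y^\delta}_t=\cL^{\P^\delta}(Y^\delta_{\cdot\land t}\,|\,\hat\sG)$, and we will see below that this conditional law equals $\mu^\delta_t$; combined with the left-inverse property this makes $b(t,X^{(x)}_{\cdot\land t},(\beta^\delta)^{(x)}_t,\mu^\delta_t)$ collapse to the drift values of $X$ and $X^*$ respectively.

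Concretely, the second step is to run the fixed-point/uniqueness argument: I would propose the candidate $Z:=X\otimes X^*$ and check it solves \eqref{eq:state-ext} on $\gamma^\delta$ with initial condition $\hat X_0$ and control $\beta^\delta$. As in Lemma \ref{lem:additive}, each stochastic/Lebesgue integral splits along $\{\xi=0\}$ and $\{\xi=1\}$ because the Brownian and Poisson integrators on $\hat\gamma$ restrict to the original ones on each fibre and because $\boldsymbol\sigma,\boldsymbol\lambda$ do not depend on the control or the law; so it remains to match the drift term. On $\{\xi=0\}$ the drift of $Z$ is $b(t,X_{\cdot\land t},(\beta^\delta)^{(0)}_t,\mu^{Z}_t)$ where $\mu^Z_t=\cL^{\P^\delta}(Z_{\cdot\land t}\,|\,\hat\sG)$; by the same independence-of-$\xi$ computation as in Lemma \ref{lem:additive} one gets $\mu^Z_t=\delta\cL^{\P}(X_{\cdot\land t}\,|\,\sG)+(1-\delta)\cL^{\P}(X^*_{\cdot\land t}\,|\,\sG)=\delta\mu_t+(1-\delta)\mu^*_t=\mu^\delta_t$, and then by the left-inverse property $b(t,X_{\cdot\land t},(\beta^\delta)^{(0)}_t,\mu^\delta_t)=b(t,X_{\cdot\land t},\varphi(t,X_{\cdot\land t},b(t,X_{\cdot\land t},\alpha_t,\mu^*_t),\mu^\delta_t),\mu^\delta_t)=b(t,X_{\cdot\land t},\alpha_t,\mu^*_t)$, which is exactly the drift of $X$ in \eqref{eq:MFG-state-law-dep}. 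The analogous identity holds on $\{\xi=1\}$ with $\alpha^*$, $X^*$, and the defining McKean--Vlasov equation \eqref{eq:state-ext}. Hence $Z$ solves $\mathrm{SDE}^{\gamma^\delta}(\hat X_0,\beta^\delta)$, and by strong uniqueness (guaranteed under Assumption \ref{asm:law-dep}(i), see the discussion before the theorem) $Y^\delta=Z=X\otimes X^*$, which is the first claim. The second claim, $\mu^\delta_t=\cL^{\P^\delta}(Y^\delta_{\cdot\land t}\,|\,\hat\sG)$, is then exactly the conditional-law computation just performed, reproduced verbatim from the proof of Lemma \ref{lem:additive} (split on $\{\xi=0\},\{\xi=1\}$, use $\hat\sG=\sG\otimes\sG^\diamond$ and independence of $\xi$, and $\P^\delta(\xi=0)=\delta$).

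The main obstacle is the subtlety in the previous paragraph: there is a circularity to disentangle, because $\mu^\delta$ is \emph{defined} in terms of $X$ and $X^*$ (solutions of their own equations) while $Y^\delta$ is defined via a control $\beta^\delta$ that refers to $\mu^\delta$, so one must be careful that the left-inverse identity is being applied with the correct law argument. The clean way to handle this is to treat $\mu^\delta_t$ as a \emph{fixed, already-constructed} $\hat\sG$-measurable process (it is built from the a priori given $X,X^*$, not from $Y^\delta$), so that $\beta^\delta$ is a genuine admissible control on $\hat\gamma$, and then the verification that $X\otimes X^*$ solves the equation is a direct substitution with no fixed point in $\mu$ — the only fixed-point input is the strong uniqueness for \eqref{eq:state-ext}, already in hand. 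I would also flag the minor point that one must check $\mu^\delta$ is $\F$-progressively measurable with values in $\sP_2(\sD_n)$ so that $\beta^\delta\in\hat\sA$; this follows from Lemma \ref{lem:technical} applied to $X$ and $X^*$ together with the growth bound on $\varphi$ in Assumption \ref{asm:law-dep}(iv). Everything else — the splitting of the stochastic integrals, the conditional-expectation manipulation — is routine and parallels Lemma \ref{lem:additive} line for line.
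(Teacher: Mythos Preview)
Your proposal is correct and follows essentially the same route as the paper's proof: propose $Z=X\otimes X^*$ as a candidate, use the independence of the coin-flip to identify $\cL^{\P^\delta}(Z_{\cdot\land t}\,|\,\hat\sG)=\mu^\delta_t$, apply the inverse property of $\varphi$ to collapse the drift on each fibre to $b(t,X_{\cdot\land t},\alpha_t,\mu^*_t)$ and $b(t,X^*_{\cdot\land t},\alpha^*_t,\mu^*_t)$, and conclude by strong uniqueness for \eqref{eq:state-ext}. Your explicit treatment of the apparent circularity (treating $\mu^\delta$ as a fixed input built from $X,X^*$ before invoking uniqueness) is in fact more careful than the paper's somewhat compressed exposition, but the underlying argument is identical.
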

\begin{proof}
We check that $X\otimes X^*$ solves  \eqref{eq:state-ext} with initial condition $\hat X_0$ and control $\beta^\delta$.
First, note $(X\otimes X^*)_0=\hat X_0$. Further, by definition of $\varphi$ as an inverse,
\begin{align*}
\int_0^t b(t,X_{\cdot\land s}\otimes X^*_{\cdot\land s},\beta^\delta_s,\mu^\delta_s)\,\d s  &= \int_0^t [b(s,X_{\cdot\land s},\alpha_s,\mu^*_s) \otimes b(s,X^*_{\cdot\land s},\alpha^*_s,\mu^*_s)]\,\d s\\
&= \left(\int_0^t b(s,X_{\cdot\land s},\alpha_s,\mu^*_s) \,\d s \right) \otimes \left(\int_0^t  b(s,X^*_{\cdot\land s},\alpha^*_s,\mu^*_s)\,\d s\right).
\end{align*}
Since the remaining dynamics do not depend on $\alpha$ or the mean field interaction, this proves $\mu^\delta_t = \cL^{\P^\delta}((X\otimes X^*)_{\cdot\land t}\,|\,\hat\sG)$. This further implies that $X\otimes X^*$ satisfies \eqref{eq:state-ext} with control $\beta^\delta$ on the basis $\gamma^\delta$. By strong uniqueness, we deduce $Y^\delta = X\otimes X^*$ on $\gamma^\delta$.
\end{proof}

In analogy to our proof presented in Section \ref{ssec:pre}, for $\delta \in (0,1)$
and a $\sP_2(\sD_n)$-valued process $\eta$ on $\Om$, set
\begin{align*}
\cJ_\delta(\eta)&:= 
\E^\delta \left[ \int_0^T F(t,Y^\delta_{\cdot\wedge t},\beta^\delta_t, \hat \eta_t)\ \d t
+ G(Y^\delta_{\cdot\wedge T},\hat \eta_T) \right],\\
\cJ(\alpha,\eta)&:=
\E\left[ \int_0^T F(t,X^\alpha_{\cdot\wedge t}, \varphi(t,X^\alpha_{\cdot\land t}, b(t,X^\alpha_{\cdot\land t},\alpha_t,\mu^*_t), \eta_t), \eta_t)\,\d t + G(X^\alpha_{\cdot\wedge T},\eta_T) \right].
\end{align*}
where $X^\alpha$ solves \eqref{eq:state-ext} on $\gamma$.
The preceding lemma implies,
$$
\cJ_\delta (\mu^\delta) = \delta \, \cJ(\alpha,\mu^\delta) + (1-\delta) \, \cJ(\alpha^*,\mu^\delta).
$$
Then,
$$
\frac{1}{\delta} \, ( \cJ_\delta (\mu^\delta) -  J(\alpha^*)) =  \cJ(\alpha,\mu^\delta) -   \cJ(\alpha^*,\mu^\delta) + \frac{1}{\delta} ( \cJ(\alpha^*,\mu^\delta) -  J(\alpha^*)) =: \cI_{1,\delta} + \cI_{2,\delta},
$$
where $J(\alpha^*)= J(X_0,\alpha^*)$ is as in \eqref{eq:MFC-cost-ext}. By the continuity and growth assumptions \ref{asm:law-dep},
$$
\lim_{\delta\downarrow0} \cI_{1,\delta} = \E
\left[\int_0^T [F(t,X_{\cdot\land t}, \alpha_t, \mu^*_t) - F(t,X^*_{\cdot\land t}, \alpha^*_t, \mu^*_t)]\,\d t 
+ G(X_{\cdot\land T}, \mu^*_T) - G(X^*_{\cdot\land T}, \mu^*_T) \right].
$$
For the second term, we note that $\varphi(t,X^*_{\cdot\land t}, b(t,X^*_{\cdot\land t},\alpha^*_t,\mu^*_t), \mu^*_t) = \alpha^*_t$ and use the chain rule to compute,
\begin{align*}
\lim_{\delta\downarrow 0} \ &\frac{1}{\delta} (F(t,X^*_{\cdot\wedge t}, \varphi(t,X^*_{\cdot\land t}, b(t,X^*_{\cdot\land t},\alpha^*_t,\mu^*_t), \mu^\delta_t), \mu^\delta_t) - F(t,X^*_{\cdot\wedge t}, \alpha^*_t, \mu^*_t)) \\
&= \sum_{j=1}^k \frac{\partial}{\partial a_j} F(t,X^*_{\cdot\wedge t},\alpha^*_t,\mu^*_t) \int_{\sD_n} (\delta_\mu \varphi_j)(t,X^*_{\cdot\land t}, b(t,X^*_{\cdot\land t},\alpha^*_t,\mu^*_t), \mu^*_t)(x)\,[\mu_t-\mu^*_t](\d x)\\
&\qquad + \int_{\sD_n} \delta_\mu F(t,X^*_{\cdot\wedge t}, \alpha^*_t, \mu^*_t)(x)\,[\mu_t-\mu^*_t](\d x).
\end{align*}
A similar calculation holds for the terminal cost $G$. We again appeal to Fubini's theorem to obtain,
$$
\lim_{\delta\downarrow0}\frac{1}{\delta} \, ( \cJ_\delta( \mu^\delta) - J(\alpha^*)) =  J_g(X_0,\alpha,\mu^*) - J_g(X_0,\alpha^*,\mu^*),
$$
where $J_g$ is as in \eqref{eq:MFG-cost-ext}.
If the problem is invariant under probability bases, then this limit is non-negative which completes the proof.

\section{Discussion and Special Cases}
\label{sec:discuss}

This section discusses special cases
that are frequently used in the literature and assumes the absence of 
common noise $(W^0,N^0)$. This assumption turns $t\mapsto\cL^\alpha_t$ into a deterministic 
flow, hence Nash equilibria only involve deterministic processes
$t \mapsto \nu_t \in \sP_2(\sD_n \times U)$.

\subsection{Separable Running Cost} 
\label{ssect:seperable}
We first investigate separable cost functions and state processes that follow \ref{eq:state}. Assume that for suitable functions,
$$
F_0: [0,T] \times \sD_n \times \sP_2(\sD_n) \to \R,\ \ 
F_1 :[0,T] \times \sP_2(\sD_n) \to \R, \ \
G_0 :\sP_2(\sD_n) \to \R, 
$$
and for $(t,x,\nu,a) \in [0,T]\times \sD_n
\times\sP_2(\sD_n\times U)\times U$ we have,
\begin{equation}\label{eq:MFC_cost_(a)}
    F(t,x,a,\nu) = F_0(t,x,a) + F_1(t,\mu), 
    \quad G(x,\nu) = G_0(\mu),
\end{equation}
where $\mu=\Pi_\sharp\nu$ and, as before, $\Pi: \sD_n \times U \to \sD_n$ is the projection on the first variable.
This setting corresponds to a
standard mean field game,
and in most applications $F_0= \tfrac12 |a|^2$.
Then, the costs
$f(t,x,\mu,a)$, $g(t,\nu)$ given in \eqref{eq:pot-game-cost} depend
only on $\mu=\Pi_\sharp\nu$ and we have
$$
f(t,x,\nu,a)= F_0(t,x,a) + \pmmu F_1(t,\mu)(x) + F_1(t,\mu),
\quad 
g(x,\mu) = \pmmu G_0(\mu)(x) + G_0(\mu).
$$
For a given flow $t\mapsto \mu_t \in \sP_2(\sD_n)$, 
the terms $F_1(t,\mu_t)$ and $G_0(\mu_T)$ are
independent of the control.  Therefore,
they do not play a role in the control problem of the typical agent,
and without loss of generality, we omit them in the definitions
of the potential MFG. 

The optimization
of  the representative agent in the MFG
is to minimize
\begin{equation}
\label{eq:MFG-cost_(a)}
    J_g(X_0,\alpha,\mu) = \E\left[\int_0^T 
    \left[F_0(t,X^{\alpha}_{\cdot \land t},\alpha_t) + 
    \pmmu F_1(t,\mu_t)(X^{\alpha}_{\cdot\land t}) \right] \,\d t +
    \pmmu G_0(\mu_T)(X^{\alpha}_T)\right]
\end{equation}
over all $\alpha \in \sA$, where
$X^\alpha$ is the controlled state \ref{eq:state} with $\sigma^0=\lambda^0=0$.

The following is a direct consequence of Theorem \ref{thm:pot_game_finite} and Theorem \ref{thm:invariance}.

\begin{Cor}\label{cor:pot_game_finite_(a)}
Suppose Assumptions \ref{asm:state}, \ref{asm:cost} and \ref{asm:invariance} hold,
and assume that $\alpha^*\in\sA$ is a
minimizer of the MFC problem \eqref{eq:MFC_cost} 
without common noise and with running cost given by \eqref{eq:MFC_cost_(a)}. 
Then, with $\mu^*_t=\cL(X^{\alpha^*}_{\cdot\land t})$
the pair $(\alpha^*,\mu^*)$ is a Nash equilibrium for the potential 
MFG with the cost functional \eqref{eq:MFG-cost_(a)}.
\end{Cor}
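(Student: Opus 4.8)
The plan is to exhibit the separable problem as a special case of the setting of Theorem \ref{thm:pot_game_finite} and then track how the general game cost \eqref{eq:MFG-cost} collapses to \eqref{eq:MFG-cost_(a)}. First I would check that the cost pair in \eqref{eq:MFC_cost_(a)} satisfies Assumption \ref{asm:cost}, which reduces to the corresponding regularity and quadratic growth of $F_0,F_1,G_0$. The key observation is that $F(t,x,a,\nu)$ and $G(x,\nu)$ depend on $\nu$ only through its first marginal $\mu=\Pi_\sharp\nu$; since $\eta\mapsto\Pi_\sharp\eta$ is affine, the defining integral identity for the linear derivative yields that $F(t,x,a,\cdot)$ and $G(x,\cdot)$ are linearly differentiable with
$$
\delta_\nu F(t,x,a,\nu)(x',a')=\delta_\mu F_1(t,\mu)(x'),\qquad
\delta_\nu G(x,\nu)(x')=\delta_\mu G_0(\mu)(x'),
$$
so continuity, measurability and quadratic growth of these derivatives are inherited from the hypotheses on $F_1,G_0$.

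Next I would substitute these expressions into the definition \eqref{eq:pot-game-cost} of $(f,g)$. Because the integrand $\delta_\nu F(t,\tilde x,\tilde a,\nu)(x,a)=\delta_\mu F_1(t,\mu)(x)$ does not depend on the integration variable $(\tilde x,\tilde a)$ — nor on $a$ — and $\nu$ is a probability measure, the first term in $f$ equals $\delta_\mu F_1(t,\mu)(x)$, and similarly for $g$. This gives exactly the displays preceding the corollary,
$$
f(t,x,a,\nu)=F_0(t,x,a)+\delta_\mu F_1(t,\mu)(x)+F_1(t,\mu),\qquad
g(x,\nu)=\delta_\mu G_0(\mu)(x)+G_0(\mu);
$$
in particular $f$ and $g$ depend on $\nu$ only through $\mu$.

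Then I would invoke Theorem \ref{thm:pot_game_finite}: under Assumptions \ref{asm:state} and \ref{asm:cost}, together with invariance under probability bases (guaranteed by Theorem \ref{thm:invariance} under Assumption \ref{asm:invariance}), any minimizer $\alpha^*$ of \eqref{eq:MFC_cost} yields a Nash equilibrium $(\alpha^*,\nu^*)$ of the general MFG of controls with $\nu^*_t=\cL((X^{\alpha^*}_{\cdot\land t},\alpha^*_t)\,|\,\sG)$. In the absence of common noise $\sG$ is trivial, so $\nu^*_t=\cL(X^{\alpha^*}_{\cdot\land t},\alpha^*_t)$ is deterministic and $\mu^*_t:=\Pi_\sharp\nu^*_t=\cL(X^{\alpha^*}_{\cdot\land t})$. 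Finally, along the fixed flow $\mu^*$ the terms $F_1(t,\mu^*_t)$ and $G_0(\mu^*_T)$ contribute to $J_g(X_0,\alpha,\nu^*)$ only an additive constant independent of $\alpha$; removing them turns the representative agent's cost into precisely \eqref{eq:MFG-cost_(a)} without altering the minimizer. Combined with the consistency relation $\mu^*_t=\cL(X^{\alpha^*}_{\cdot\land t})$ and the fact that $f,g$ factor through $\mu$, this shows $(\alpha^*,\mu^*)$ is a Nash equilibrium of the potential MFG with cost \eqref{eq:MFG-cost_(a)}.

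I do not expect a substantive obstacle here; the argument is essentially bookkeeping. The only two points needing a little care are the chain rule for linear derivatives under the affine map $\eta\mapsto\Pi_\sharp\eta$ (immediate from the defining integral formula) and the verification that the $\mu$-formulation of the Nash property is equivalent to the $\nu$-formulation of Theorem \ref{thm:pot_game_finite}, which holds precisely because in the separable case $f$ and $g$ see the joint law only through its state marginal.
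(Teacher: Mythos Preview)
Your proposal is correct and follows exactly the route the paper takes: the corollary is stated as a direct consequence of Theorem~\ref{thm:pot_game_finite} and Theorem~\ref{thm:invariance}, and the surrounding discussion in Section~\ref{ssect:seperable} carries out precisely the computation of $\delta_\nu F$, $\delta_\nu G$ and the reduction of $(f,g)$ that you describe. The only thing the paper leaves implicit, which you make explicit, is the passage from the $\nu^*$-formulation to the $\mu^*$-formulation via the observation that $f,g$ depend on $\nu$ only through $\Pi_\sharp\nu$ and that the $F_1,G_0$ terms are constant in $\alpha$.
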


\subsection{Markovian Cost Without Jumps} 
\label{ssec:markov}

In addition to separability of the cost functionals, we now assume 
that $\lambda=0$.  Then, the state process $X^\alpha$
is continuous in time, and the invariance is proved 
in \cite[Proposition 2.4]{McKean_Vlasov_Formulations}.
Following \cite{briani2018stable},
we also assume that the cost functions in \eqref{eq:MFC_cost_(a)} 
are Markovian, i.e.\ at time $t\in[0,T]$ the dependence 
on the state $X$ is only through the current position $X_t$. 
Hence, the payoff functional \eqref{eq:MFC_cost}
has the form,
\begin{equation}
\label{eq:MFC_cost_(b)}
J(X_0,\alpha) = \E\left[ \int_0^T \left[
\tilde F_0(t,X^\alpha_t,\alpha_t)  +\tilde F_1(t,\cL(X^\alpha_t))
\right]\, \d t \right]
+ \tilde G_0(\cL(X^\alpha_T)),
\end{equation}
for some functions $\tilde{F}_0$, $\tilde{F}_1$ and $\tilde{G}_0$. 

The corresponding MFG cost functional is given by,
$$
J_g(X_0,\alpha,\mu) = \E\left[\int_0^T \left[ \tilde F_0(t,X^\alpha_t,\alpha_t) 
+ \pmmu \tilde F_1(t,\mu_t)(X^\alpha_t)\right]\,\d t +
\pmmu \tilde G_0(\mu_T)(X^{\alpha}_T)\right],
$$
This is exactly the MFG studied in many of 
the earlier studies such as \cite{briani2018stable}.
In particular, it is a standard MFG as the mean field
interaction is only through the law of the current state and not
the control. Then, the measure flow takes values in 
$\sP_2(\R^n)$ instead of $\sP_2(\sD_n)$ or $\sP_2( \sD_n\times U)$. 
Further, by the continuity of $t\mapsto X^\alpha_t$, the flow
$t\mapsto\mu_t\in\sP_2(\R^n)$ is continuous, and the Corollary \ref{cor:pot_game_finite_(a)} holds
with a continuous deterministic flow $\mu^*_t=\cL(X^{\alpha^*}_t)$.

\subsection{Law-Dependent Dynamics}\label{ssec:special-case-law}

We make the same assumptions on the cost functionals $F$ and $G$ of the previous section but allow for the following law-dependence in the drift of the state dynamics, assuming $k=n$,
$$
b(t,x,a,\mu) = a + b_1(t,x,\mu),\quad (t,x,a,\mu)\in[0,T]\times\sD_n\times U \times \sP_2(\sD_n).
$$
Then, under Assumption \ref{asm:law-dep}, the MFC problem of minimizing \eqref{eq:MFC_cost_(b)} subject to dynamics \eqref{eq:state-ext} gives rise to a mean field game with cost functional,
\begin{align*}
    J_g(X_0,\alpha,\nu) = &\E\Bigl[\int_0^T [\tilde F_0(t,X^{\alpha,\mu}_t,\alpha_t) + \pmmu \tilde F_1(t,\mu_t)(X^{\alpha,\mu}_t) \\
    &\qquad - \int_{\R^n\times U} \nabla_a  \tilde F_0(t,\wt x, \wt a) \cdot \delta_\mu b_1(t,\wt x,\mu_t)(X^{\alpha,\mu}_t)\, \nu_t(\d \wt x, \d \wt a) ]\,\d t + \delta_\mu \tilde G_0(\mu_T)(X^{\alpha,\mu}_T)\Bigr],
\end{align*}
which the representative agent minimizes given 
a measure flow $\nu=(\nu_t)_{t\in[0,T]}$. 
We recall that the linear derivative of a vector-valued function is understood entry-wise, $\mu=\Pi_\sharp\nu$, and $X^{\alpha,\mu}$ solves \eqref{eq:MFG-state-law-dep}. This follows by Theorem \ref{thm:pot-game-law-dep} upon noting that in this case we have
$$
\varphi(t,x,y,\mu) = y - b_1(t,x,\mu), \quad (t,x,y,\mu)\in[0,T]\times \sD_n \times B \times \sP_2(\sD_n).
$$

\subsection{Deterministic and Stochastic Formulations}
\label{sec:comparison_Cardaliaguet}

Although the formulation of \cite{briani2018stable}
is seemingly different than that of ours, the associated mean field game that  \cite{briani2018stable} derive is included in our framework.
Indeed, control problems on the space of measures 
have been studied in varying settings. 
In  particular, \cite{McKean_Vlasov_DPP, McKean_Vlasov_Formulations} 
provide a comprehensive overview of different stochastic 
formulations of the problem. In the absence of
common noise and with Markovian dynamics,
these problems also admit  \emph{deterministic}
formulations that directly control the 
evolution of laws via the Fokker-Planck equation
\cite{ bensoussan2015master, briani2018stable,BIRS, cecchin2022weak, lauriere2014dynamic,masoero2019long,SY1,SY2}.

To further explain the connection between 
these stochastic and deterministic formulations 
we specialize to the case considered in \cite{briani2018stable}.
Namely, we assume that
$J$ is given as in \eqref{eq:MFC_cost_(b)}, and consider the simple dynamics
without jumps and common noise,
$$
\d X^\alpha_t =\alpha_t \,\d t + \sigma \d W_t.
$$
In this setting these two formulations
are the following.
\begin{enumerate}[(1)]
    \item \emph{Stochastic (strong) formulation.} 
    On a given probabilistic basis, 
    the objective is to minimize $J(X_0,\alpha)$
    of \eqref{eq:MFC_cost_(b)}
    over controls $\alpha\in\sA$, with above dynamics.
    
    \item \emph{Deterministic formulation.} In this
    formulation the state process
    is the law of the controlled
    diffusions $\mu^\alpha_t=\cL(X^\alpha_t)$,
    and the controls are of feedback type.
    Then, the analogous control problem 
    is to minimize
    \begin{equation*}
        \int_0^T \left( \int_{\R^n} \tilde F_0 (t,x,\alpha(t,x))\,
        \mu^\alpha_t(\d x) \ +\ \tilde F_1(t, \mu^\alpha_t) \right)\,\d t 
        + \tilde G(\mu^\alpha_T),
    \end{equation*} 
    over all measurable feedback functions
    $\alpha :[0,T] \times \R^n \to U$,
    where  the state $\mu^\alpha \in \sP(\R^d)$ solves the 
    Fokker-Planck equation:
    \begin{equation*}
        \partial_t \mu^\alpha + \mathrm{div}(\alpha \mu^\alpha)
        -\frac{\sigma^2}{2}\Delta \mu^\alpha = 0.
    \end{equation*}
    The above is understood in a distributional sense 
    with initial condition $\mu^\alpha_0 = \cL(X_0)$.
\end{enumerate}
The equivalence between 
between these two formulations
is proved using  the \emph{superposition principle} that relates a solution 
of a Fokker–Planck equation to a weak solution of a SDE, see for example 
\cite{lacker2022superposition} 
which establish this fact under much more general dynamics. 
Starting from the deterministic formulation,
this allows us to define a \emph{weak control}  for the stochastic problem
in feedback form with the probability space being part of the control. 
One then appeals to a  mimicking theorem to establish the equivalence of weak control processes and 
weak controls in feedback form. Additional arguments such as boundedness of 
feedback controls are utilized to obtain a \emph{strong control} on a 
fixed probability space. In the present setting, a detailed implementation of
these steps is given in
\cite[Proposition 2.2]{cecchin2022weak}.

\section{Examples }
\label{sec:examples}

\subsection{A Potential MFG of Controls with Price Interaction}

We continue with an example of a mean field game introduced in \cite{bonnans2021schauder}, involving a continuum of traders who interact through both a congestion term and a price variable. In equilibrium, the price is endogenously determined by the distribution of optimal controls. Interpreting the agents as producers rather than traders, similar mean field models have been proposed in the context of optimal power generation and storage within smart grids \cite{alasseur2020extended}. Models of this type are furthermore related to mean field Cournot competition, where producers compete over an exhaustible resource, see \cite{chan2017fracking}.\\

Given a deterministic path for the price $P=(P_t)_{t\in[0,T]}\in\R^k$, a typical trader determines the purchasing rate $\alpha_t$ of $n$ goods. The level of the stock $X^\alpha_t\in\R^n$ is assumed to follow $\d X^\alpha_t=\alpha_t\,\d t +\sigma\,\d W_t$ starting from an initial condition $X^\alpha_0=x\in\R^n$. The typical trader minimizes an expected cost
$$
J_g(X_0,\alpha,\mu)=\E\Bigl[\int_0^T[f_0(t,X^\alpha_t,\alpha_t)+f_1(t,X^\alpha_t,\mu_t)+\phi(t,X^\alpha_t)^T P_t\cdot \alpha_t]\,\d t + g(X^\alpha_T)\Bigr],
$$
where $f_0$ models a purchasing and storage cost, $f_1$ is a congestion term and $\phi:\R^n\to\R^{k\times n}$ is a generic weighing function. For example, taking $n=k$ and $\phi\equiv I_{n\times n}$ allows us to track the prices of each good so that the time $t$ purchasing cost equals $P_t\cdot \alpha_t$. We refer to  \cite{bonnans2021schauder} for further interpretation of the function $\phi$. If $\alpha^*$ denotes an optimal control given $P$, then aggregate demand is
$$
D_t=D_t(P):=\int_{\R^n\times \R^n}\phi(t,x)a\,\cL(X^{\alpha^*}_t,\alpha^*_t)(\d x,\d a),\quad t\in[0,T],
$$
where we made the dependence on the given price path $P$ explicit. If $\Psi(t,z)$ is an inverse demand function, then the supply-demand relation that determines the price level translates into the following equilibrium condition:
$$
P_t=\Psi(t,D_t(P)),\quad \forall t\in[0,T].
$$
Plugging this into the cost functional, we obtain the running cost
$$
f(t,x,a,\nu) = f_0(t,x,a)+f_1(t,x,\mu)+\phi(t,x)\cdot \Psi(t, \int \phi(t,\wt x)\,\wt a \, \nu(\d \wt x,\d\wt a))\,a
$$
for $(t,x,a,\nu)\in[0,T]\times \R^n \times \R^n \times \sP_2(\R^n\times \R^n)$. We therefore see that determining an equilibrium price path turns into solving a mean field game of controls in the sense of Definition \ref{def:MFG_equilibrium}. 

Whenever we can find potentials $F_1:[0,T]\times\R^n\times\sP_2(\R^n)\to\R$, $\Phi:[0,T]\times \R^k\to\R$ such that 
$$
f_1(t,x,\mu) = \delta_\mu F_1(t,\mu),\quad \Psi(t,z) = \nabla_z  \Phi(t,z),\quad (t,x,z,\mu)\in[0,T]\times \R^n\times \R^k \times \sP_2(\R^n)
$$
we can employ Theorem \ref{thm:pot_game_finite} to see that this is a potential mean field game with associated control problem
$$
J(X_0,\alpha) = \E\Bigl[\int_0^T [f_0(t,X^\alpha_t,\alpha_t) + F_1(t,\cL(X^\alpha_t)) + \Phi(t,\E[\phi(t,X^\alpha_t)\alpha_t])]\,\d t + G(X^\alpha_T)\Bigr].
$$
Indeed, we simply note that for any $(t,x,a,\nu)\in[0,T]\times\R^n\times\R^n\times\sP_2(\R^n\times \R^n)$,
$$
\delta_\nu \Psi(t, \nu(\phi(t,\wt x)\,\wt a)) (x,a) = \nabla_z \Phi(t,\nu(\phi(t,\wt x)\,\wt a)) \phi(t, x) a = \Psi(t,\nu(\phi(t,\wt x)\,\wt a) )\phi(t,x)a,
$$
where we write $ \nu(\phi(t,\wt x)\,\wt a):= \int \phi(t,\wt x)\,\wt a \,\nu(\d\wt a,\d\wt a)$.
\begin{Rmk}
\rm{
Possible state constraints common to models of this kind can be accommodated in our approach by \emph{a priori} restricting the class of admissible controls. Infinite horizon cost functions can be handled using the results presented in Appendix \ref{sec:dih}.}
\end{Rmk}

\subsection{Connection to Gradient Flows}

In this section we propose a general connection of Wasserstein Gradient flows to MFC and MFG problems, and we illustrate this approach with three examples. Fix some filtered probability space $(\Omega,\sF,\F,\P)$ 
supporting a Brownian motion $W$ and fix a volatility $\sigma>0$. 
For a sufficiently differentiable function $F:\sP_2(\R^n)\to\R$, consider the  following McKean-Vlasov stochastic differential equation:
\begin{equation}\label{eq:MFLD}
    \d X_t = - \nabla_x \delta_\mu F (\cL(X_t))(X_t)\,\d t + \sigma \d W_t.
\end{equation}
In analogy to finite-dimensional Langevin dynamical systems, \cite{hu2021MFLD} refers to \eqref{eq:MFLD} 
as the \emph{mean field Langevin dynamics}. The corresponding 
non-linear Fokker-Planck equation can be interpreted as a
 gradient flow of measures with respect to the $2$-Wasserstein distance. 
 Recently, these  equations
have received considerable attention due to their connection to noisy gradient descent algorithms in 
deep neural networks \cite{chizat2022MFLD, hu2021MFLD, mei2018MFLD, nitanda2022MFLD}. 

 
 For a linearly convex functional $F$ , \cite{hu2021MFLD}
 proves that the laws of the solution to \eqref{eq:MFLD} converge to 
 the minimizer of the (strictly convex) free energy
\begin{equation*}
    F(\mu) + \frac{\sigma^2}{2} H(\mu),
\end{equation*}
where $H(\mu)$ is the relative entropy of $\mu \in \sP_2(\R^n)$
with respect to Lebesgue measure.  

In the context of mean field control or
games, an alternative connection 
to an optimization problem
is proposed 
for several models, including \cite{carmona2023synchronization, YMMS,yin_synchronization_2010} 
for the Kuramoto synchronization
and \cite{nourian2010synthesis,nourian2011mean} for the Cucker-Smale 
flocking.  In general, we propose the following MFC with a separable structure,
\begin{equation}\label{eq:MFLD_value_function} 
    v(\mu):=\,\inf_\alpha\quad \E \left[\int_0^\infty e^{-\beta t} 
    \left( \frac12 |\alpha_t|^2 + F(\cL(X^\alpha_t))\right)\,\d t\right],\quad \mu\in\sP_2(\R^n),
\end{equation}
where $dX^\alpha_t = \alpha_t\,\d t + \sigma \,\d W_t$, 
$\mu =\cL(X_0)$ is given, and $\beta>0$ is a positive discount factor. 
It is standard that $v$ solves the following dynamic programming equation 
\begin{equation*}
    \beta v(\mu) = -\frac{1}{2}\mu\left(|\nabla_x\pmmu v(\mu)(\cdot)|^2\right) 
    + \frac{1}{2}\mu\left(\Delta_x\pmmu v(\mu)(\cdot)\right) + F(\mu).
\end{equation*}
Whenever the value function is smooth, the optimally controlled state process follows
\begin{equation}\label{eq:MFLD_opt_state}
    \d X_t = - \nabla_x \pmmu v (\cL(X_t))(X_t)\,\d t + \sigma \,\d W_t.
\end{equation}
This is again an equation of type \eqref{eq:MFLD}, except the functional 
$F$ is replaced by the value function $v$. Now, a standard control 
argument shows that linear convexity of $F$ implies linear convexity of the value function $v$, 
suggesting a similar structure of \eqref{eq:MFLD} and \eqref{eq:MFLD_opt_state}. 
Further, note that if $F$ is sufficiently smooth, one can always define a suitable 
control problem such that the optimally controlled state dynamics exactly match \eqref{eq:MFLD}. 
Indeed, this can be achieved by replacing $F$ by
\begin{equation*}
    \wt F(\mu) := \beta F(\mu) 
    + \frac{1}{2}\mu\left(|\nabla_x\pmmu F(\mu)(\cdot)|^2\right) 
    - \frac{1}{2}\mu\left(\Delta_x\pmmu F(\mu)(\cdot)\right).
\end{equation*}
Then, a direct calculation shows that  the value function 
with running cost $\wt F$  is exactly $F$, 
and the optimal state process solves 
the mean field Langevin equation \eqref{eq:MFLD}.

Using our main result, minimizers of the control problem 
give rise to Nash equilibria of an associated MFG
with cost functional,
\begin{equation}
\label{eq:MFLD_pot_game}
     J_{g,\beta}(X_0,\alpha,\mu^*):=
      \E \left[\int_0^\infty e^{-\beta t} \left(\frac12 |\alpha_t|^2 + \pmmu F(\mu^*_t)(X^{\alpha}_t) \right)\,\d t \right],
\end{equation}
and state dynamics $X^\alpha_t = X_0+\int_0^t \alpha_s\,\d s + \sigma W_t$. We continue to illustrating the above discussion with two prominent examples of mean field games.

\subsection{Kuramoto Synchronization} 

The classical  Kuramoto dynamical system 
is a fascinating phenomenological model for
collective synchronization \cite{kuramoto_self-entrainment_1975}. It postulates
the following system of coupled 
ordinary differential equations describing the phases $\theta$ of $N$ oscillators, 
\begin{equation*}
    \frac{\d}{\d t} \theta^k_t = \omega^k + \frac{\kappa}{N} \sum_{j=1}^N \sin(\theta^j_t - \theta^k_t), \quad k=1,..,N.
\end{equation*}
Here $\omega^k$ is the natural frequency of the 
$k$-th oscillator, and $\kappa>0$ denotes the coupling strength,
or the interaction parameter.  The survey article \cite{strogatz_kuramoto_2000} 
provides an excellent  overview and account of the history of this rich subject. 

An intruging property of this model is a phase transition across a critical value of $\kappa$,
from incoherence to spontaneous synchronization. 
The related mean field model  studied in \cite{giacomin2012global}
that shares this feature
is a  McKean-Vlasov equation
obtained by setting $\omega^k$ to zero and adding a Brownian noise
with strength $\sigma >0$.  The resulting dynamics read,
\begin{equation*}
    \d X_t = - \kappa \int_{\T} \sin(X_t-y)\,\cL(X_t)(\d y)\,\d t  + \sigma\,\d W_t,
\end{equation*}
with the one-dimensional torus $\T$ as its state space. 
This flow is  in the form of a mean field Langevin SDE \eqref{eq:MFLD}, for the potential
$$
    F(\mu) := \int_{\T} \int_{\T} \sin^2 ((x-y)/2) \, \mu(\d x) \, \mu(\d y)
    \quad\Rightarrow\quad
   \pmmu F (\mu, x) = \int_{\T} 2\sin^2 ((x-y)/2)  \mu(\d y) .
$$
Then, using the trigonometric identity $2\sin^2(x/2)=1-\cos(x)$, we compute that
$$
    \nabla_x \pmmu F (\mu, x)     = \int_{\T}  \nabla_x  (1-\cos(x-y))  \mu(\d y)
    = \int_{\T} \sin(x-y) \mu(\d y).
$$

The associated MFC problem minimizes
\begin{equation*}
    \inf_\alpha\quad \E 
    \int_0^\infty e^{-\beta t} \left(\frac12 \alpha_t^2 + F(\cL(X^\alpha_t))\right)\,\d t 
\end{equation*}
subject to $X^\alpha_t =  \int_0^t \alpha_s\,\d s + \sigma W_t$. 
In view of Theorem \ref{thm:pot_game_finite}, any minimizer $\alpha^*$ of 
 this MFC gives rise to a mean field Nash 
equilibrium  $\mu^*_t=\cL(X^{\alpha^*}_t)$, in which players minimize,
\begin{equation*}
    \inf_\alpha \quad \E \int_0^\infty 
    e^{-\beta t} \left(\frac12 \alpha_t^2 + \kappa \int_{\T} 2\sin^2\left(\frac{X^{\alpha}_t-y}{2}\right) \, 
    \mu^*_t(\d y) \right)\,\d t .
\end{equation*}
The above MFG approach to synchronization was first proposed 
by Yin, Mehta, Meyn, \& Shanbhag in \cite{yin_synchronization_2010,YMMS}, 
and later used by Carmona \& Graves \cite{carmona_jet_2020} 
to study jet-lag recovery by modeling the alignment with the circadian rhythm. 
These studies and the recent works of Carmona, Cormier, \& Soner 
\cite{carmona2023synchronization}, Cesaroni \& Cirant \cite{cesaroni_stationary_2023}, 
and H{\"o}fer \& Soner \cite{hoefer2024synchronization} 
establish phase transitions in the Kuramoto MFGs analogous to the one 
exhibited by the original and mean field Kuramoto dynamical system, 
providing evidence for a connection between these seemingly very different models.

\subsection{Cucker-Smale Flocking} 
In their classical paper  \cite{cucker2007emergent}, 
Cucker \& Smale propose a model of a flock of $N$ birds in which
the state of the $k$-th bird is described by a vector $(x^k,v^k)\in\R^3\times\R^3$ 
where $x^k$ denotes the location and $v^k$ the velocity. 
With given constants $\kappa>0$, $\rho\geq0$,
they postulate,
\be
\label{eq:CS-ODE}
\frac{\d}{\d t} x^k_t = v^k_t, \qquad
\frac{\d}{\d t} v^k_t = \frac{\kappa}{N}\sum_{k=1}^N \frac{(v^j_t-v^k_t)}{\phi(x_t-\tilde x_t)},
\qquad \phi(x):= (1+|x|^2)^\rho.
\ee

\emph{Flocking} is interpreted as $\max_{k,j=1,..,N}|x^k_t-x^j_t|$ staying bounded 
for all times and 
\cite{cucker2007emergent} proves that flocking occurs if $\rho<1/2$.
Motivated by this model,
Nourian, Caines \& Malham{\'e} \cite{nourian2010synthesis, nourian2011mean}
propose an ergodic mean field game in which the representative player's running cost is given by,
\begin{equation*}
    f((x,v),a,\mu) = \frac12 |a|^2 + \frac{\kappa}{2} \Bigl| \int_{\R^{2n}}
    \frac{v-\tilde v}{\phi(x-\tilde x)}\,\mu(\d \tilde x, \d \tilde v) \Bigr|^2,
\end{equation*}
where $a\in \R^n$, $\mu \in \sP_2(\R^{2n})$. 
Also, \cite{nourian2011mean} observes
that $\rho=0$ reduces to a classical linear-quadratic
control problem and performs a perturbation analysis for small $\rho$.

Although this system is not always a gradient
flow, the velocity equation 
has the form
\be
\label{eq:vk}
\frac{\d}{\d t} v^k_t 
= -\ \frac{\partial}{\partial v^k} \pmmu F(\mu^N_t)(x^k_t,v^k_t),
\ee
where $\mu^N_t$ is the empirical measure of $((x^1_t,v^1_t),. . ,(x^N_t,v^N_t))$  and
$$
 F(\mu) := \frac{\kappa}{4} \int_{\R^{2n}}\int_{\R^{2n}} \frac{|v-\tilde v|^2}{\phi(x-\tilde x)}\,
 \mu(\d x, \d v)\,\mu(\d \tilde x, \d \tilde v), \quad \mu\in\sP_2(\R^{2n}).
$$
Indeed, we calculate that
$$
\pmmu F(\mu)(x,v) = \frac{\kappa}{2} \int_{\R^{2n}} 
\frac{|v-\tilde v|^2}{\phi(x-\tilde x)}\,\mu(\d \tilde x, \d \tilde v)
\   \Rightarrow \ 
 \nabla_v \pmmu F(\mu)(x,v) = \kappa \int_{\R^{2n}} 
\frac{v-\tilde v}{\phi(x-\tilde x)}\,\mu(\d \tilde x, \d \tilde v).
$$
Consistent with this observation,
\cite[Chapter 4.7.3]{carmona_probabilistic_2018-1} suggests
a slightly different MFG than that of 
\cite{nourian2010synthesis}.
It is a potential game corresponding to the MFC 
of \eqref{eq:MFLD_value_function} with 
running cost $F$ and the
$2n$-dimensional state dynamics 
\begin{equation*}
    \d X^\alpha_t = V^\alpha_t\,\d t,\quad \d V^\alpha_t = \alpha_t\,\d t + \sigma\,\d W_t.
\end{equation*}  
As in Subsection \ref{ssect:seperable},
the corresponding MFG minimizes  \eqref{eq:MFLD_pot_game} with
the running cost, 
$$
f((x,v),a,\mu)= \frac12 |a|^2
+ \nabla_v \pmmu F(\mu)(x,v)
=  \frac12 |a|^2
+\kappa \int_{\R^{2n}} 
\frac{v-\tilde v}{\phi(x-\tilde x)}\,\mu(\d \tilde x, \d \tilde v).
$$
In particular, if the value function $v(\mu)$
of this problem is smooth, then the optimal feedback control
is given by $- \nabla_v \pmmu v(\mu)(x,v)$.  Thus, the
optimally controlled state dynamics follow
$$
d X^*_t  = V^*_t\,\d t,\qquad
dV^*_t = - \nabla_v \pmmu v(\mu_t^*)(X^*_t,V^*_t) \, \d t + \sigma \, \d W_t.
$$ 
As in the pure gradient
flow cases, above equation resembles closely the original model \eqref{eq:vk}
with the value function replacing the functional $F$. Additionally,
Theorem \ref{thm:pot_game_finite} implies that
the law of $(X^*,V^*)$ is also a Nash equilibrium of the potential MFG 
\eqref{eq:MFLD_pot_game}.

\subsection{Linear Quadratic Problem}
The special case $\rho=0$ of the Cucker-Smale model
is the well-known linear quadratic problem, which we now discuss.
In this case, the mean field limit  potential is given by,
$$
F(\mu) = \frac12 \int_{\R^n}\int_{\R^n} |x-y|^2\,\mu(\d x) \, \mu(\d y)  
= \frac12 \int_{\R^n}\int_{\R^n}(|x|^2+|y|^2 -2x\cdot y)\, \mu(\d x) \mu(\d y) = \mathrm{Var}(\mu),
$$
for $\mu\in\sP_2(\R^n)$. Then, with $\mathfrak{m}(\mu):= \int y\, \mu(\d y)$,
$$
\pmmu F(\mu)(x) =  \int_{\R^n} |x-y|^2 \mu(\d y) 
\quad\Rightarrow \quad
\nabla_x \pmmu F(\mu)(x) = 2  \int_{\R^n} (x-y) \mu(\d y)
= 2(x- \mathfrak{m}(\mu)).
$$
This implies that the corresponding  mean field Langevin equation is given by,
\begin{equation*}
    \d X_t = - 2 (X_t-\E[X_t])\,\d t + \sigma\,\d W_t.
\end{equation*}
The associated MFC with interaction parameter $\kappa>0$ is
\begin{equation*}
   v(\mu):=  \inf_{\alpha\in\mathcal{A}}\quad \E\int_0^{\infty} 
    e^{-\beta t}\left(\frac{1}{2}|\alpha_t|^2 + \kappa \,
    F(\mathcal{L}(X^\alpha_t))\right)\,\d t,
\end{equation*}
where $X^\alpha_t =  X_0 + \int_0^t \alpha_s\,\d s + \sigma W_t$,
and $\mu =\cL(X_0)$. 
This is a simple
linear, quadratic control problem and can be solved 
explicitly. Indeed, the dynamic programming equation is,
\begin{equation*}
    \beta v(\mu) = -\frac{1}{2}\mu\left(|\nabla_x\pmmu v(\mu)(\cdot)|^2\right) 
    + \frac{\sigma^2}{2}\mu\left(\Delta_x\pmmu v(\mu)(\cdot)\right) 
    + \kappa \mathrm{Var}(\mu).
\end{equation*}
The unique solution is $v(\mu) = a\mathrm{Var}(\mu)+b$ with,
\begin{equation*}
    a = \frac{\sqrt{8\kappa+\beta^2}-\beta}{4},
    \quad b= \frac{\sigma^2 n a }{\beta},
\end{equation*}
so that the optimally controlled state process $X^*$ follows,
\begin{equation*}
   \d X^*_t = -2a(X^*_t-\E[X^*_t]) \,\d t + \sigma\, \d W_t.
\end{equation*}
The stationary measures of this equation are Gaussian 
distributions with arbitrary mean and variance $\sigma^2/(4a)$. 
The associated potential mean field game minimizes
$$
J_{g,\beta}(X_0,\alpha,\mu) 
= \E \int_0^{\infty}e^{-\beta t} \left(\frac{1}{2}|\alpha_t|^2 
+ \kappa \int_{\R} |X^{\alpha}_t-y|^2\,\mu_t(\d y)\right)\,\d t
$$
for a given continuous measure flow $t \in [0,\infty) \mapsto \mu_t \in \sP_2(\R^n)$. 
This mean field game can be solved explicitly, and a direct computation shows 
that any Gaussian distribution with variance $\sigma^2/(4a)$ provides a 
stationary mean field game equilibrium.

\section{Invariance Principle}
\label{sec:invariance}
This section proves the invariance principle
stated in Definition \ref{def:invariance} in a general setting that includes law-dependent dynamics, which leads to an SDE of McKean-Vlasov type.

We fix a probabilistic basis $\gamma$ and an initial condition $X_0 \in \sL^2(\sF^\gamma_0)$. In this section, we allow the controlled state process to be of a general form,
\begin{equation}
\label{eq:invariance-state}
\begin{split}
X^{\alpha}_t = X_0 &+ \int_0^t b(s,X^\alpha_{\cdot\land s},\alpha_s,\mu^\alpha_s)\,\d s + \int_0^t \boldsymbol{\sigma}(s,X^\alpha_{\cdot\land s},\alpha_s,\mu^\alpha_s)\,\d \sW^\gamma_s\\
&+  \int_0^t\int_E
\boldsymbol{\lambda}(s,X^\alpha_{\cdot\land s-},\alpha_s,\mu^\alpha_{s-},\zeta)\, \wt \sN^\gamma(\d s,\d \zeta),
\end{split}
\end{equation}
where $\alpha\in\sA(\gamma)$, $\mu^\alpha_s:=\cL^\gamma(X^\alpha_{\cdot\land s}\,|\,\sG^\gamma_s)$, and $\sG^\gamma$ again refers to the common noise filtration generated by $(W^0,N^0)$. The coefficients are given by functions,
$$
(b,\sigma,\sigma_0):[0,T]\times \sD_n \times U\times \sP_2(\sD_n)
\to \R^n \times \sM_{n\times d}\times
 \sM_{n\times\ell},
$$
$$
(\lambda,\lambda^0): [0,T] \times \sD_n \times U \times \sP_2(\sD_n)\times E
\to \R^n\times \R^n,
$$
and we set $\boldsymbol{\sigma}=(\sigma,\sigma^0)$ and $\boldsymbol{\lambda}=(\lambda,\lambda^0)$.

The cost functional $J^\gamma(\alpha, X_0)$ and value function $v(X_0,\gamma)$ of the MFC are defined by \eqref{eq:MFC_cost} where $X^\alpha$ satisfies \eqref{eq:invariance-state} instead of \ref{eq:state}. Further, let $\fm_2$ be the 2-Wasserstein metric on $\sP_2(\sD_n\times U)$ and 
recall the sup-norm $\| \cdot \|_T$ 
on $\sD_n$ and the notation
${\fs}(\nu)$ for the second moment \eqref{eq:2nd_moment}. 

The following assumption is natural extension of Assumption \ref{asm:state} and ensures strong existence and uniqueness of the general state dynamics \eqref{eq:invariance-state}.

\begin{Asm}\label{asm:invariance-state} 
All coefficients are measurable and  
there exist constants $K,L>0$ and $K(\zeta),L(\zeta)$ 
such that for any $(t,x,x',a,\mu,\mu',\zeta)\in [0,T] \times 
\sD_n^2 \times U\times \sP_2(\sD_n)^2\times E$ we have,
\begin{align*}
|(b,\sigma,\sigma^0)(t,x,a,\mu)-(b,\sigma,\sigma^0)(t,x',a,\mu')| &\leq L(\|x-x'\|_t+\mathfrak{m}_2(\mu,\mu')),\\
|(\lambda,\lambda^0)(t,x,a,\zeta)-(\lambda,\lambda^0)(t,x',a,\zeta)| &\leq L(\zeta)(\|x-x'\|_t+\mathfrak{m}_2(\mu,\mu')),\\
|(b,\sigma,\sigma^0)(t,x,a,\mu)|^2 &\leq K(1+\|x\|_t^2+|a|^2+\mathfrak{s}(\mu)),\\
|(\lambda,\lambda^0)(t,x,a,\mu,\zeta)|^2 &\leq K(\zeta)^2(1+\|x\|_t^2+|a|^2+\mathfrak{s}(\mu)),
\end{align*}
\noindent and the constants $L(\zeta),K(\zeta)$ satisfy $\int_E (L(\zeta)^2+K(\zeta)^2)\, (n^0+n)(\d\zeta)<\infty$.
\end{Asm}

The invariance principle is proved under the following additional assumptions on the coefficients and cost functions. Set $\sX:= \sD_n\times U \times\sP_2(\sD_n\times U)$. 

\begin{Asm}
\label{asm:invariance}
{\rm{
\begin{itemize}
\item (Local Uniform Boundedness).
For any $\ell>0$,  there exists a constant $c_\ell$ such that  for any 
$(t,x,\mu,\zeta)\in[0,T]\times \sD_n\times \sP_2(\sD_n)\times E$ and $|a|, |a'| \le \ell$,
\begin{align*}
 |(b,\sigma,\sigma^0)(t,x,a,\mu) - (b,\sigma,\sigma^0)(t,x,a',\mu)| &\leq c_\ell, \\
  |(\lambda,\lambda^0)(t,x,a,\mu,\zeta) - (\lambda,\lambda^0)(t,x,a',\mu,\zeta)| &\leq c_\ell L(\zeta).
\end{align*}
Further, $\int_EL(\zeta)^{2+\delta}(n^0+n)(d\zeta)<\infty$ for some $\delta>0$.
\item (Quadratic Growth).  $F,G$ are measurable and there is a constant $c_*$ such that
$$
    |F(t,x,a,\nu)| + |G(x,\nu)| \leq C(1+\|x\|_T^2 +|a|^2  +\fs(\nu)),
$$
for every $(t,x,a,\nu)\in [0,T]\times\sX$.
\item (Uniform Continuity).
For any $K > 0$, there is a  
continuous map $m_K : [0,\infty) \to [0,4K]$ with $m_K(0)=0$ 
such that for all  $(t,x,\hat x,\nu,\hat \nu,a, \hat a) \in [0,T]\times \sX^2$,
$$
|F_K(t,x,a,\nu) -F_K(t,\hat x, \hat a, \hat \nu)|+ |G_K(x,\nu) -G_K(\hat x, \hat \nu)| 
\leq m_K(\|x-\hat x\|_T + \fm_2(\nu, \hat \nu)+|a-\hat a|),
$$  
where  $F_K:= (F \wedge K)\vee(-K)$, 
$G_K:= (G \wedge K)\vee(-K)$.
\item (Feasibility). 		
     $v(X_0,\gamma) <\infty$ for every $\gamma$ and $X_0 \in \sL^2(\sF_0^\gamma)$.
\end{itemize}}}
\end{Asm}

\begin{Thm}
\label{thm:invariance}
If the coefficients of an MFC problem satisfy 
Assumptions \ref{asm:invariance-state} and \ref{asm:invariance}, then
it is invariant under probability bases.
\end{Thm}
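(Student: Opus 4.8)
\emph{Strategy.} The inequality $v(X_0,\gamma)\ge V(\varrho)$ is immediate from the definition \eqref{eq:V} of $V$ as a triple infimum, since $\gamma\in\Gamma(\varrho)$ and $X_0\in\cI^\gamma(\varrho)$ whenever $\cL^\gamma(X_0)=\varrho$. The real content is the reverse inequality. The plan is to fix an arbitrary competitor triple $(\gamma',X_0',\alpha')$ with $\gamma'\in\Gamma(\varrho)$, $X_0'\in\cI^{\gamma'}(\varrho)$, $\alpha'\in\sA(\gamma')$, and, given $\eps>0$, to construct an admissible $\alpha\in\sA(\gamma)$ with $J^\gamma(\alpha,X_0)\le J^{\gamma'}(\alpha',X_0')+\eps$; passing to the infimum over $(\gamma',X_0',\alpha')$ and letting $\eps\downarrow0$ then yields $v(X_0,\gamma)\le V(\varrho)$. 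The guiding observation is that $J^\gamma(\alpha,X_0)$ depends on the basis only through the joint law, under $\P^\gamma$, of $\big((W^0,N^0),\,X^\alpha,\,\alpha\big)$ on the relevant path spaces: the conditional law $\cL^{\gamma,\alpha}_t=\cL^\gamma((X^\alpha_{\cdot\land t},\alpha_t)\,|\,\sG^\gamma)$ is a measurable functional of this joint law, and by strong well-posedness under Assumption \ref{asm:invariance-state} the flow $\mu^\alpha$ entering \eqref{eq:invariance-state} is pinned down as well. Hence it suffices to realise on $\gamma$ a control whose associated joint law (approximately) matches that of $\alpha'$ on $\gamma'$.

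\emph{Reductions.} From the linear-growth bounds in Assumption \ref{asm:invariance-state}, a Gronwall/Burkholder estimate gives $\E^\gamma[\sup_{t\le T}|X^\alpha_t|^2]\le C(1+\E|X_0|^2+\|\alpha\|_{2,T}^2)$, so by the quadratic growth of $F,G$ one may replace $F,G$ by their truncations $F_K,G_K$ with an error tending to $0$ as $K\to\infty$, uniformly over controls of $L^2$-norm at most $R$ (the only ones ever compared, as $\alpha'$ is fixed and $\alpha$ will be kept with $\|\alpha\|_{2,T}\le\|\alpha'\|_{2,T}+1$); henceforth $F,G$ are bounded with modulus $m_K$. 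Then I would replace $\alpha'$, at the cost of an $\eps$-error, by a \emph{transferable} control — an explicit non-anticipating measurable functional of $X_0'$ and of $(W'^0,W',N'^0,N')$ — via: (i) truncation of $\alpha'$ to a bounded control, using the constant $c_\ell$ from the local uniform boundedness of the coefficients together with the boundedness of $F_K,G_K$; (ii) left-continuous time-discretisation into piecewise-constant-in-time controls, where $\int_EL(\zeta)^{2+\delta}(n^0+n)(\d\zeta)<\infty$ controls the jump part of the stability estimate; and (iii) a monotone-class/measurable-selection approximation of the $\sF^{\gamma'}_{t_i}$-measurable coefficients by bounded functions of finitely many increments of $(W'^0,W')$, values $N'([0,\cdot]\times B_j)$, and $X_0'$, with any genuine extra randomisation in $\alpha'$ emulated — up to a vanishing time shift — from the fine structure of the idiosyncratic Brownian motion on a short initial interval.

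\emph{Transfer and conclusion.} Once $\alpha'$ is transferable, define $\alpha$ on $\gamma$ by applying the same functional to $X_0$, to the corresponding increments of $(W^0,W)$ and values of $N$, and to an auxiliary randomisation built the same way from $W$. Predictability and the $L^2$-bound of $\alpha$ pass through because the joint law of the arguments is identical on the two bases. By strong uniqueness for \eqref{eq:invariance-state} and the law-determined nature of conditional McKean--Vlasov solutions (the Picard iterates carry the same joint law on $\gamma$ and $\gamma'$ at every stage), one obtains $\cL^\gamma\big((W^0,N^0),X^\alpha,\alpha\big)=\cL^{\gamma'}\big((W'^0,N'^0),X^{\alpha'},\alpha'\big)$, hence $J^\gamma_K(\alpha,X_0)=J^{\gamma'}_K(\alpha',X_0')$ for the truncated problem; undoing the truncation and the approximations above gives $J^\gamma(\alpha,X_0)\le J^{\gamma'}(\alpha',X_0')+\eps$, which completes the proof.

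\emph{Main obstacle.} I expect the crux to be the reduction to a transferable control in the presence of the conditional law $\cL^{\gamma,\alpha}_t$, which enters the running cost \emph{nonlinearly} and the coefficients of \eqref{eq:invariance-state} \emph{through} $\mu^\alpha$: the truncation and time-discretisation estimates must be carried out at the level of the conditional law, using stability of conditional McKean--Vlasov SDEs in the $2$-Wasserstein metric, not merely of the unconditional law. The measurable selection needed to make the approximating controls genuinely explicit, and the argument that ``extra randomisation at time $0$'' may be pushed to time $0+$ without spoiling predictability or the cost, are the remaining delicate points; everything else is standard SDE stability together with the observation that the cost functional sees only the appropriate joint law.
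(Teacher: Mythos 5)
Your proposal follows essentially the same route as the paper. Both prove the nontrivial direction $v(X_0,\gamma)\le V(\varrho)$ by: (a) reducing to bounded, uniformly continuous cost functions via truncation and a uniform-integrability estimate that uses the local uniform boundedness of coefficients and the $(2+\delta)$-moment of the jump kernel; (b) introducing an initial time interval $[0,\eps]$ on which the state is frozen and the control is constant (the paper's ``delayed start''); (c) approximating by piecewise-constant controls; and (d) converting those into ``strong'' controls, i.e.\ measurable functionals of $(X_0,\sW,\sN)$, which can then be realised on any basis with the same joint law and hence the same cost. The paper routes the comparison through an explicit canonical basis $\gamma^c(\varrho)$ (Lemma \ref{lem:canonical} plus the reverse inequality), while you compare $\gamma$ directly to an arbitrary competitor $\gamma'$ — a purely organisational difference. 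You also correctly identify the central technical obstacle: controlling the conditional-law terms under these approximations, which the paper handles by combining the modulus of continuity $m_K$ from Assumption \ref{asm:invariance} with $\sL^2$-stability of the delayed SDE (Lemma \ref{lem:app_eps}).

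The one place your description is conceptually imprecise is step (iii). Calling it a ``monotone-class/measurable-selection approximation'' of the $\sF^{\gamma'}_{t_i}$-measurable coefficients suggests you are approximating $\alpha_i$ by cylinder functionals of $(X_0',\Xi'_{t_i})$. That cannot work as stated: $\sF^{\gamma'}_{t_i}$ may be strictly larger than $\sigma(X_0',\Xi'_{t_i})$, so $\alpha_i$ is in general not even approximable by such functionals — there is genuinely extra randomness to account for. The right statement, which is what the paper implements via the stochastic transfer theorem (Lemma \ref{lem:transfer}) in Proposition \ref{pro:transfer}, is an \emph{exact} matching of joint laws: one extracts i.i.d.\ uniforms $\xi_1,\dots,\xi_m$ from $W_\eps$ (made independent of the relevant increments precisely by the delayed start), and constructs recursively $\alpha^{st}_i=\psi_i(\xi_i,X_0,\alpha^{st}_1,\dots,\alpha^{st}_{i-1},\Xi_i)$ so that $\cL(X_0,\alpha^{st}_1,\dots,\alpha^{st}_i,\Xi_i)=\cL(X_0,\alpha_1,\dots,\alpha_i,\Xi_i)$ for every $i$. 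This is a law-preserving transfer, not an approximation, and it is what makes the final step — equality of the truncated costs on the two bases via Lemma \ref{lem:dependence} — go through. You do flag the need to ``emulate extra randomisation ... from the fine structure of the idiosyncratic Brownian motion on a short initial interval,'' which is exactly the idea, but the mechanism should be the transfer theorem rather than a monotone-class argument. With that correction, your outline matches the paper's proof.
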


\begin{Rmk}
 \label{rem:assumptions}   
{\rm{In the diffusion setting,
the invariance is proved in \cite[Theorem 3.6]{Cosso2023}
for `rich enough' probabilistic structures \cite[Section 2.1.3]{Cosso2023}
and in \cite[Proposition 2.4]{McKean_Vlasov_Formulations}.  The 
counter example provided in  \cite[Section 3]{Cosso2023}
shows the necessity of continuity assumptions.
A careful analysis of the proof of our results shows that
local uniform boundedness assumption can be replaced by
$\sL^{p}$ integrability of $X_0$ for some $p>2$, a condition
used in several places, for example \cite{lacker2015}, to overcome similar difficulties.}}
\end{Rmk}

This section is devoted to the proof of Theorem \ref{thm:invariance}.
Let $\sC_m$ be the set of $\R^m$ valued
continuous functions on $[0,T]$, and
recall the measurable spaces $(E,\sE)$ and $(M,\sM)$ related
to the point processes 
given in Appendix \ref{app:point}.

For $\varrho \in \sP_2(\R^n)$ consider
the \emph{canonical}
probability basis 
$$
\gamma^c(\varrho)=(\Omc,\sF^c,\P^c,\F^c,\bW^c,\bN^c),
$$
where
\begin{itemize}
\item $ \Omc = \R^n\times \sC_\ell \times \sC_d \times M^2$;
\item $\P^c$ is probability measure on $\Omc$
given as the product of $\varrho$, the law of a standard 
$(\ell+d)$-dimensional Brownian motion, and  the law
of the point process on $M^2$;
\item The $\sigma$-algebra $\sF^c$  is the $\P^c$ completion of
$\sB(\R^n)\otimes\sB(\sC_{\ell+d})\otimes \sM^{\otimes 2}$;
\item $\bW^c(\om^c)=  (w^0,w)$, $\bN^c(\om^c)=(\pi^0,\pi)$ 
for $\omega^c=(x_0,w^0,w,\pi^0,\pi)\in \Omc$;
\item The  filtration $\F^c = (\sF^c_t)_{t \ge0}$ is
the right-continuous $\P^c$-completion of $\F^\circ$, where
$\sF^\circ_t $ is the smallest $\sigma$-algebra such that
$$
\omega^c=(x_0,w^0,w,\pi^0,\pi)\in \Omc 
\mapsto (x_0,w^0_{\cdot\land t},w_{\cdot\land t},\pi^0(B),\pi(B)) \in \R^{n+\ell+d+2k}
$$
is measurable for every $B=[0,s]\times S$ where $s\leq t$, $S\in\sE$. 
\end{itemize}
We further set $X_0^c(\om^c):=x_0$  for $\omega^c=(x_0,w^0,w,\pi^0,\pi)\in \Omc$ so that 
$\cL^{\P^c}(X_0^c)=\varrho$.

For a basis $\gamma$ and $\eps>0$,
the following definitions are used in the  analysis,
\be
\label{eq:Xi}
\Xi^{\gamma}_t:= (\sW^\gamma_{s \wedge t} , \sN^\gamma_{s\wedge t})_{s \in [0,T]},
\quad
\Xi^{\gamma,\eps}_t:= (W^{0,\gamma}_{s\wedge t}, W^\gamma_{{s \vee \eps}\wedge t} -W^\gamma_\eps, 
\sN^\gamma_{s\wedge t})_{s \in [0,T]},\quad t \in [0,T],
\ee
where $\sN^\gamma_{\cdot\wedge t}$ refers to the restriction of $\sN^\gamma$ to $[0,t]\times E$.

The following is an immediate consequence of the definition. 
\begin{Lem}
\label{lem:canonical}
For any $\varrho \in \sP_2(\R^n)$, $ \gamma \in \Gamma(\rho)$, and 
$X_0 \in \cI^\gamma(\varrho)$, $v(X_0,\gamma) \le v(X_0^c,\gamma^c(\varrho))$.
\end{Lem}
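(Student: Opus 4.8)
The plan is to exploit the fact that $\gamma^c(\varrho)$ is the \emph{smallest} admissible basis carrying an initial condition of law $\varrho$: any other such basis $\gamma$ supports a measure-preserving copy of $\gamma^c(\varrho)$, so one can pull every canonical control back to $\gamma$ without increasing the cost. This gives only the inequality $v(X_0,\gamma)\le v(X_0^c,\gamma^c(\varrho))$; the reverse inequality, which upgrades Lemma~\ref{lem:canonical} to genuine invariance, is the substantive part of Theorem~\ref{thm:invariance} and is where the local boundedness and uniform continuity hypotheses of Assumption~\ref{asm:invariance} enter.

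First I would fix $\gamma\in\Gamma(\varrho)$ and $X_0\in\cI^\gamma(\varrho)$ and introduce the map $\Phi:\Omega^\gamma\to\Omega^c$,
$$
\Phi(\omega):=\bigl(X_0(\omega),\,W^{0,\gamma}(\omega),\,W^{\gamma}(\omega),\,N^{0,\gamma}(\omega),\,N^{\gamma}(\omega)\bigr).
$$
Using the independence assumptions in Definition~\ref{def:prob_basis} together with $\cL^\gamma(X_0)=\varrho$ and the standard identification of Wiener and Poisson laws, one checks that $\Phi_\sharp\P^\gamma=\P^c$. Moreover $\Phi$ is filtration-compatible: $\Phi^{-1}(\sF^c_t)\subseteq\sF^\gamma_t$ for every $t$, which is immediate from the definition of $\F^c$, since the frozen paths $w^0_{\cdot\wedge t},w_{\cdot\wedge t}$ and the restricted point measures $\pi^0([0,s]\times S),\pi([0,s]\times S)$ with $s\le t$ all pull back under $\Phi$ to $\sF^\gamma_t$-measurable objects (recall $\sF^\gamma_0$ contains all null sets and $\F^\gamma$ is right-continuous). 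Consequently $\Phi$ pulls the canonical predictable $\sigma$-algebra into the $\gamma$-predictable one and, modulo $\P^\gamma$-null sets, pulls $\sG^c$ (resp.\ each $\sG^c_s$) back to $\sG^\gamma$ (resp.\ $\sG^\gamma_s$).

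Next, given any $\alpha^c\in\sA(\gamma^c(\varrho))$, set $\alpha_t(\omega):=\alpha^c_t(\Phi(\omega))$. Filtration compatibility makes $\alpha$ $\F^\gamma$-predictable, and measure preservation gives $\|\alpha\|_{2,T}=\|\alpha^c\|_{2,T}<\infty$, so $\alpha\in\sA(\gamma)$. Since the (McKean--Vlasov) equation \eqref{eq:invariance-state} has a strong solution produced by a conditional Picard iteration, $X^{\gamma^c,\alpha^c}$ is a fixed measurable functional of $\bigl(X_0^c,\bW^c,\bN^c,\alpha^c\bigr)$ and the common-noise filtration; evaluating that functional on $\gamma$ and invoking strong uniqueness (Theorem~\ref{thm:state} and its law-dependent extension) yields $X^{\gamma,\alpha}=X^{\gamma^c,\alpha^c}\circ\Phi$ up to indistinguishability. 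The key auxiliary fact is that a measure-preserving map intertwining the relevant common-noise $\sigma$-algebras transports regular conditional distributions: applied at each $s$ it identifies the mean-field coefficient $\mu^\alpha_s=\cL^\gamma(X^{\gamma,\alpha}_{\cdot\wedge s}\mid\sG^\gamma_s)$ with $\mu^{\alpha^c}_s\circ\Phi$ (consistency of the fixed point under $\Phi$), and applied at time $T$ it gives $\cL^{\gamma,\alpha}_t=\cL^{\gamma^c,\alpha^c}_t\circ\Phi$. Hence the whole integrand of the cost \eqref{eq:MFC_cost} is a fixed functional composed with $\Phi$, so $J^\gamma(\alpha,X_0)=J^{\gamma^c}(\alpha^c,X_0^c)$; taking the infimum over $\alpha^c\in\sA(\gamma^c(\varrho))$ and using $\{\alpha^c\circ\Phi\}\subseteq\sA(\gamma)$ yields $v(X_0,\gamma)\le v(X_0^c,\gamma^c(\varrho))$.

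The routine parts are the measure preservation of $\Phi$ and the square-integrability of the transported control. The main obstacle is the bookkeeping around the conditional laws: one must verify carefully that $\Phi^{-1}(\sG^c)$ and $\sG^\gamma$ (and their filtered versions) agree up to $\P^\gamma$-null sets so that the conditional-distribution transport identity is legitimate, that the $\F^\gamma$-progressively measurable modification of $\cL^{\gamma,\alpha}$ furnished by Lemma~\ref{lem:technical} is precisely the one obtained by composing the canonical version with $\Phi$, and --- in the law-dependent case --- that the strong solution on $\gamma$ genuinely is the $\Phi$-pullback of the canonical one rather than merely having the same law.
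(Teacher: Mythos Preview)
Your proposal is correct and takes essentially the same approach as the paper: define the map $\Phi(\omega)=(X_0(\omega),\sW^\gamma(\omega),\sN^\gamma(\omega))$ from $\Omega^\gamma$ to $\Omega^c$, pull back each canonical control $\alpha^c$ to $\alpha=\alpha^c\circ\Phi\in\sA(\gamma)$, observe that the joint law of $(X_0,\sW,\sN,\alpha)$ is preserved so the costs agree, and take the infimum over $\alpha^c$. The paper's proof is only a few lines and asserts the equality $J^\gamma(X_0,\alpha)=J^{\gamma^c(\varrho)}(X_0^c,\alpha^c)$ as following ``by their definitions''; you have simply unpacked the bookkeeping (filtration compatibility, conditional-law transport, strong-solution pullback in the McKean--Vlasov case) that the paper leaves implicit.
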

\begin{proof}
For $\alpha^c \in \sA(\gamma^c(\varrho))$, we define $\alpha^\gamma_t(\om):= 
\alpha^c_t( X_0(\om),\sW^\gamma(\om),\sN^\gamma(\om))$
for $t \in [0,T]$, $\om \in \Om^\gamma$.
It is clear that $\alpha \in \sA(\gamma)$ and 
$\cL^{\P^\gamma}(X_0,\sW^\gamma,\sN^\gamma,\alpha^\gamma)= 
\cL^{\P^c}(X_0^c,\sW^c,\sN^c,\alpha^c)$.
Hence, by their definitions,
$J^\gamma(X_0,\alpha^\gamma)= J^{\gamma^c(\varrho)}(X_0^c,\alpha^c)$.
Since for each control in the canonical setting
we have constructed a control in $\gamma$ with same cost,
this implies the claimed inequality.
\end{proof}

In order to prove Theorem  \ref{thm:invariance}, we show
in the next subsections that for any $\varrho \in \sP_2(\R^n)$, $ \gamma \in \Gamma(\rho)$, and 
$X_0 \in \cI^\gamma(\varrho)$ 
the reverse inequality holds,
\begin{equation}
\label{eq:sufficient}
v(X_0,\gamma) 
\ge v(X_0^c,\gamma^c(\varrho)).
\end{equation}
Indeed, this together with Lemma \ref{lem:canonical}
would imply that $v(X_0,\gamma)=v(X_0^c,\gamma^c(\varrho))$.
Since the latter depends only on $\varrho$, we conclude
for all possible bases and initial condition, the value function
depends only on $\varrho$.

The above inequality is established in several steps.
We first reduce it to the case of bounded cost functionals.
The second step is to construct an approximating sequence of piecewise
controls. We then transfer them to strong controls.

\subsection{Reduction to Bounded Cost Functions}
\label{ssec:bounded}

We start with the first reduction and suppress the dependence on $\gamma$.
For positive constants $c,\ell >0$,  set 
$F_{c}:= F\vee (-c)$, $G_c:= G\vee (-c)$
and 
$$
\sA_\ell :=\{ \alpha \in \sA\, :\, |\alpha_t| \le \ell \ \ \text{for all} \ t\in[0,T]\,\}.
$$
Let $J_c$  be the cost functional
given by \eqref{eq:MFC_cost} with $(F,G)$ replaced 
with $(F_c,G_c)$ and define value functions by,
$$
v^\ell(X_0):=\inf_{\alpha \in \sA_\ell} J(X_0,\alpha),\qquad
v^\ell_c(X_0) := \inf_{\alpha \in \sA_\ell} J_c(X_0,\alpha).
$$
We claim that 
$$
\lim_{\ell \to \infty} v^{\ell}(X_0) =v(X_0),\qquad
\lim_{c \to \infty} v^\ell_c(X_0) =v^\ell(X_0).
$$
To prove the first convergence, fix $a_0 \in U$
and for any $\alpha \in \sA$, set
$\alpha^\ell_t := \alpha_t$ if $|\alpha_t|\leq \ell$ and 
$\alpha^\ell_t:=a_0$ otherwise. Then, for large $\ell$, we have $\alpha^\ell \in \sA_\ell$ and
under the assumed regularity conditions, it is standard to show that
$\lim_{\ell \to \infty} J(X_0,\alpha^\ell) = J(X_0,\alpha)$. 
As $v^\ell$ is non-increasing in $\ell$,
this implies that
$\lim_{\ell \to \infty} v^{\ell}(X_0) =v(X_0)$.
Also, for each $\alpha \in \sA_\ell$, one can directly show that 
$\lim_{c\to\infty} J_c(X_0,\alpha)=J(X_0,\alpha)$. 
Since $v^\ell_c$ is non-increasing in $c$, these imply that
$\lim_{c \to \infty} v^{\ell}_c(X_0) =v^{\ell}(X_0)$. 

For a positive constant $m$ set
$F^m_c:= F_c\wedge m$, $G_c^m:= G_c\wedge m$, and
introduce the corresponding value functions by,
$$
v^\ell_{c,m}(X_0) := \inf_{\alpha \in \sA_\ell} J^m_c(X_0,\alpha),
$$
where $J^m_c$ is the cost functional defined 
using $(F^m_c,G^m_c)$.

\begin{Lem}
Suppose that Assumptions \ref{asm:state} and \ref{asm:invariance} holds. Then,
for every $X_0 \in \sL^2(\sF_0)$, and any $c,\ell$,
$$
\lim_{m \to \infty} v^\ell_{c,m}(X_0) =v^\ell_c(X_0).
$$
\end{Lem}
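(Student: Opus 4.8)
The strategy is to upgrade the pointwise convergence $J^m_c(X_0,\alpha)\uparrow J_c(X_0,\alpha)$ — which is immediate from monotone convergence, since $F^m_c=F_c\wedge m\uparrow F_c$ and $G^m_c\uparrow G_c$ while all these functions are bounded below by $-c$ — to convergence that is \emph{uniform} over $\alpha\in\sA_\ell$. Granting this, the conclusion is routine: from $F^m_c\le F_c$, $G^m_c\le G_c$ we get $J^m_c\le J_c$ on $\sA_\ell$, hence $v^\ell_{c,m}(X_0)\le v^\ell_c(X_0)$; since $m\mapsto F^m_c,\,G^m_c$ are non-decreasing, $m\mapsto v^\ell_{c,m}(X_0)$ is non-decreasing, so $L:=\lim_m v^\ell_{c,m}(X_0)$ exists with $L\le v^\ell_c(X_0)$; and uniform convergence gives $v^\ell_c(X_0)=\inf_{\alpha\in\sA_\ell}J_c(X_0,\alpha)\le\inf_{\alpha\in\sA_\ell}J^m_c(X_0,\alpha)+\varepsilon_m=v^\ell_{c,m}(X_0)+\varepsilon_m$ for some $\varepsilon_m\to0$, whence $v^\ell_c(X_0)\le L$.

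For the uniform estimate I would fix $\epsilon>0$, write $F_c=(F_c\wedge m)+(F_c-m)^+$ and likewise for $G_c$, and use the elementary inequality $(F_c-m)^+=(F_c^{+}-m)^{+}\le (F^{+})^{1+\epsilon}/m^{\epsilon}\le |F|^{1+\epsilon}/m^{\epsilon}$ for $m\ge1$ (valid since $F_c^{+}=F^{+}$), so that for every $\alpha\in\sA_\ell$,
\[
0\le J_c(X_0,\alpha)-J^m_c(X_0,\alpha)\le \frac{1}{m^{\epsilon}}\,\E\!\left[\int_0^T |F|^{1+\epsilon}\bigl(t,X^\alpha_{\cdot\land t},\alpha_t,\cL^\alpha_t\bigr)\,\d t+|G|^{1+\epsilon}\bigl(X^\alpha_{\cdot\land T},\cL^\alpha_T\bigr)\right].
\]
Invoking the quadratic growth of $F,G$ from Assumption \ref{asm:invariance}, the bound $|\alpha_t|\le\ell$, the trivial inequality $\|X^\alpha_{\cdot\land t}\|_T\le\|X^\alpha\|_T$, and conditional Jensen in the form $\fs(\cL^\alpha_t)^{1+\epsilon}=\bigl(\E[\|X^\alpha_{\cdot\land t}\|_T^2+|\alpha_t|^2\mid\sG]\bigr)^{1+\epsilon}\le 2^{\epsilon}\,\E\bigl[\|X^\alpha\|_T^{2(1+\epsilon)}+\ell^{2(1+\epsilon)}\mid\sG\bigr]$, the right-hand side is bounded by $C_{\epsilon,\ell,T}\,m^{-\epsilon}\bigl(1+\E[\|X^\alpha\|_T^{2(1+\epsilon)}]\bigr)$.

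Thus everything reduces to the \emph{uniform} higher-moment bound $M_\ell:=\sup_{\alpha\in\sA_\ell}\E\bigl[\|X^\alpha\|_T^{2+\delta}\bigr]<\infty$ for some $\delta>0$; taking $\epsilon=\delta/2$ then gives $\sup_{\alpha\in\sA_\ell}\bigl(J_c(X_0,\alpha)-J^m_c(X_0,\alpha)\bigr)\le C''(1+M_\ell)\,m^{-\delta/2}=:\varepsilon_m\to0$, which is the uniform convergence used above. Establishing $M_\ell<\infty$ is the main obstacle, and I expect it to be the only non-routine step: it follows from the standard Burkholder--Davis--Gundy plus Gr\"onwall argument applied to \eqref{eq:invariance-state}, where the linear growth of the coefficients (Assumption \ref{asm:state}) together with the local uniform boundedness in $a$ from Assumption \ref{asm:invariance} make the constants independent of $\alpha$ on $\sA_\ell$, the reinforced jump integrability $\int_E L(\zeta)^{2+\delta}(n^0+n)(\d\zeta)<\infty$ controls the compensated-Poisson part, the conditional-law dependence enters only through $\fs(\mu^\alpha_s)\le\E[\|X^\alpha_{\cdot\land s}\|_T^2\mid\sG_s]+\ell^2$ and closes the Gr\"onwall loop, and the higher integrability of $X_0$ is used as in Remark \ref{rem:assumptions}.
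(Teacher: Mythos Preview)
Your overall structure is sound, but the crucial step fails under the stated hypotheses. You claim that
\[
M_\ell:=\sup_{\alpha\in\sA_\ell}\E\bigl[\|X^\alpha\|_T^{2+\delta}\bigr]<\infty,
\]
and justify this by ``BDG plus Gr\"onwall'' together with an appeal to Remark~\ref{rem:assumptions}. But the lemma only assumes $X_0\in\sL^2(\sF_0)$, and since $X^\alpha_0=X_0$ we have $\E[\|X^\alpha\|_T^{2+\delta}]\ge\E[|X_0|^{2+\delta}]$, which may well be infinite. The local uniform boundedness condition in Assumption~\ref{asm:invariance} does \emph{not} supply higher moments of $X_0$; Remark~\ref{rem:assumptions} says the opposite of what you read into it---namely that $\sL^p$ integrability of $X_0$ for some $p>2$ is an \emph{alternative} hypothesis, not one that is in force here. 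So your bound $J_c-J^m_c\le C(1+M_\ell)m^{-\delta/2}$ cannot be closed.

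The paper's proof avoids this trap by working along a near-optimal sequence $\alpha^m\in\sA_\ell$ and showing only that $\eps(m):=J_c(X_0,\alpha^m)-J^m_c(X_0,\alpha^m)\to0$. By quadratic growth this reduces to \emph{uniform integrability} of $(\|X^{\alpha^m}\|_T^2)_m$, not to a higher-moment bound on each $X^{\alpha^m}$. That uniform integrability is obtained via the local uniform boundedness assumption applied to the \emph{difference} $X^{\alpha^m}-X^{\beta}$ for a fixed $\beta\in\sA_\ell$: since the difference starts at $0$ and the coefficient differences are bounded by $c_\ell$ (respectively $c_\ell L(\zeta)$), a BDG--Gr\"onwall estimate yields $\sup_m\E[\|X^{\alpha^m}-X^\beta\|_T^{2+\delta}]<\infty$ with \emph{no} call on moments of $X_0$. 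Then H\"older with exponents $(2+\delta)/2$ and $(2+\delta)/\delta$, together with $\|X^{\alpha^m}\|_T^2\le 2\|X^{\alpha^m}-X^\beta\|_T^2+2\|X^\beta\|_T^2$, gives the required uniform integrability. Your argument can be repaired along exactly these lines: replace the unattainable bound on $\E[\|X^\alpha\|_T^{2+\delta}]$ by the difference estimate, and trade your uniform-in-$\alpha$ convergence for convergence along the minimizing sequence.
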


\begin{proof}
Without loss of generality assume $G\equiv0$. We clearly have $v^\ell_{c,m}(X_0)\leq v^\ell_c(X_0)$ for any $m$.
For the reverse inequality let $\alpha^m\in\sA_\ell$ be a $1/m$-optimal control for $v^\ell_{c,m}(X_0)$. Set
$$
\eps(m) := J_c(X_0,\alpha^m)-J^m_c(X_0,\alpha^m).
$$
Then, $\eps(m)\ge 0$ and 
$$
v^\ell_c(X_0) \leq J_c(X_0,\alpha^m) = 
J^m_c(X_0,\alpha^m) + \eps(m) \leq v^\ell_{c,m}(X_0) + 1/m + \eps(m).
$$
Hence, it suffices to show $\lim_{m\to\infty} \eps(m) = 0$. To establish this, we first note that 
the sequence $(X^m)_{m\geq1}$ is bounded in $\sL^2$ due to boundedness of $\alpha^m\in\sA_\ell$. 
In view of the quadratic growth Assumption \ref{asm:invariance} of $F,G$,
\begin{align*}
   0\leq \eps(m) &\leq \E\int_0^T |F_c(t,X^m_{\cdot\land t}, \alpha^m_t,\cL^m_t)
   - F_c^m(t,X^m_{\cdot\land t}, \alpha^m_t,\cL^m_t)| \,\d t\\
   &= \E\int_0^T |F_c(t,X^m_{\cdot\land t}, \alpha^m_t,\cL^m_t)| \chi_{\{|F_c(t,X^m_{\cdot\land t}, \alpha^m_t,\cL^m_t)| > m \}} \,\d t\\
   &\leq c_* \E\int_0^T [1+\|X^m\|_t^2 +|\alpha^m|^2 +\fs(\cL^m_t)] \chi_{\{c_*[1+\|X^m\|_t^2 +|\alpha^m|^2 +\fs(\cL^m_t)]>m\}} \,\d t,
\end{align*}
where we set $X^m := X^{\alpha^m}$ and $\cL^m_t:=\cL^{\alpha^m}_t=\cL((X^m_{\cdot\land t},\alpha^m_t)\,|\,\sG)$. By the boundedness of $\alpha^m$, the $\sL^2$ boundedness of $(X^m)_{m\geq1}$, and basic properties of conditional expectations to deal with the $\cL^m$-terms, it suffices to show uniform integrability of $(\|X^m\|_T^2)_{m\geq1}$,
$$
\lim_{K\to\infty} \ \sup_{m\geq1} \ 
\E[\|X^m\|_T^2 \chi_{\{\|X^m\|_T^2>K\}}] = 0.
$$
Using the assumptions on the SDE coefficients in Assumption \ref{asm:invariance} and \cite[Theorem 2.11]{kunita_jumps}, for any fixed $\beta \in \sA_\ell$ and $\delta>0$ as in Assumption \ref{asm:invariance}, there exists a constant $c$ such that
$$
\E[\|X^m-X^{\beta}\|_T^{2+\delta}] \leq c(1+\E \int_0^T \|X^m-X^{\beta}\|_t^{2+\delta}\,\d t),
$$
and we conclude by Gr\"onwall's inequality that $ \sup_{m\geq1} \E[\|X^m-X^{\beta}\|_T^{2+\delta}] \leq ce^{cT}$.
Set $A(m,K):=\{\|X^m\|_T^2>K\}$. Clearly, by $\sL^2$-boundedness of $(X^m)_{m\geq1}$, 
$$
\sup_{m\geq1} \P(A(m,K)) \leq \frac1K \sup_{m\geq1} \E[\|X^m\|_T^2] \longrightarrow 0, \quad \text{as} \ K\to\infty.
$$
This implies that, for any fixed $\beta\in\sA_\ell$, using H\"older's inequality with $p:=(2+\delta)/2$ and $q:=(2+\delta)/\delta$,
\begin{align*}
    \E[\|X^m\|_T^2 \chi_{A(m,K)}] &\leq 2 \, \E([\|X^m-X^{\beta}\|_T^2 + \|X^{\beta}\|_T^2) \chi_{A(m,K)}] \\
    &\leq 2 \,\E[\|X^m-X^{\beta}\|_T^{2+\delta}]^{1/p} \P(A(m,K))^{1/q} + 2 \, \E[\|X^{\beta}\|_T^2 \chi_{A(m,K)}]\\
    &\leq 2 \, (ce^{cT})^{1/p} \P(A(m,K))^{1/q} + 2 \,\E[\|X^{\beta}\|_T^2 \chi_{A(m,K)}].
\end{align*}
This converges to zero as $K\to\infty$, uniformly in $m$, completing the proof.
\end{proof}

Now suppose that Theorem \ref{thm:invariance} holds 
for all $F$ and $G $ that satisfy not only Assumption \ref{asm:invariance}
but are also bounded. This would imply that for every $\ell,c,m$,
the invariance principle holds for the MFC with functions 
$(F^m_c,G^m_c)$ and with $\sA_\ell$. 
Then, for any $\gamma,\gamma' \in \Gamma(\varrho)$, 
$X_0 \in \cI^\gamma(\varrho)$ 
and $X_0 \in \cI^\gamma(\varrho)$,  $X'_0 \in \cI^{\gamma'}(\varrho)$, 
$$
v^{\ell}_{c,m}(X_0,\gamma)=v^{\ell}_{c,m}(X_0',\gamma').
$$
In view of the above lemma and the discussion preceding it, this implies that
$v(X_0,\gamma)=v(X_0',\gamma')$, proving 
the invariance principle for the original cost functions $(F,G)$.  
Hence, without loss of generality we may assume that $F$, $G$
are bounded and by Assumption \ref{asm:invariance},
they are uniformly continuous with a bounded 
continuous modulus of continuity $\boldsymbol{m}$.

\subsection{Delayed Start}
\label{ssec:delay}

This section introduces a technique presented in \cite{McKean_Vlasov_DPP}. We now fix a stochastic basis $\gamma$, an initial condition $X_0\in\sL^2(\sF^\gamma_0)$, and again suppress the dependence on $\gamma$.
For $\alpha \in \sA$ and $\eps>0$, 
we construct an approximating state process $X^{\alpha,\eps}$
and its cost. We first set
$$
X^{\alpha,\eps}_t = X_0, \qquad \forall \, t \in [0,\eps].
$$
For $t \in [\eps,T]$, we let $X^{\alpha,\eps}$ be the unique strong solution of the
following equation,
\begin{align*}
\nonumber
 X^{\alpha,\eps}_t = X_0 + \int_\eps^ t &
 b(s,X^{\alpha,\eps}_{\cdot \wedge s},\mu^{\alpha,\eps}_s,\alpha_s)\,\d s + \int_\eps^t 
 \boldsymbol{\sigma}(s,X^{\alpha,\eps}_{\cdot \wedge s},\mu^{\alpha,\eps}_s,\alpha_s)\,\d \sW_s\\ 
\tag{SDE$^{\eps}(X_0,\alpha)$}
&+ \int_\eps^t\int_E 
\boldsymbol{\lambda}(s,X^{\alpha,\eps}_{\cdot \wedge s-},\mu^{\alpha,\eps}_{s-},\alpha_s,\zeta)\, 
\wt\sN (\d s,\d \zeta),
\end{align*}
where $\mu^{\alpha,\eps}_s=\cL(X^{\alpha,\eps}_{\cdot \wedge s}\,|\,\sG_s)$. Using $ \cL^{\alpha,\eps}_t:= 
\cL((X^{\alpha,\eps}_{\cdot \wedge t},\alpha_t)\, | \, \sG)$,
we define the approximating cost functional by,
\begin{equation}
\label{eq:app_cost}
J^{\eps}(X_0,\alpha):
=\E\left[\int_0^T F(t,X^{\alpha,\eps}_{\cdot \wedge t}, \alpha_t, \cL^{\alpha,\eps}_t)\ \d t
+G(X^{\alpha,\eps}_{\cdot \wedge T},  \cL^{\alpha,\eps}_T)\right].
\end{equation}

Recall the definition of $\|\alpha\|_{2,T}$ of \eqref{eq:norm}.

\begin{Lem}
\label{lem:app_eps}
Consider sequences $\alpha^k \in \sA$,
$\eps_k>0$.  Then,
$$
\lim_{ k \to \infty}\ 
(\eps_k + \|\alpha^k\|_{2,\eps_k})=0
\quad \Rightarrow \quad \lim_{k \to \infty} \ 
|J(X_0,\alpha^k)-J^{\eps_k}(X_0,\alpha^k)| = 0.
$$
\end{Lem}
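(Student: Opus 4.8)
The plan is to reduce the assertion to the single pathwise $\sL^2$-estimate
\begin{equation*}
\lim_{k\to\infty}\ \E\bigl[\|X^{\alpha^k}-X^{\alpha^k,\eps_k}\|_T^2\bigr]=0,
\end{equation*}
and to prove this estimate by a two-stage Grönwall argument, first on $[0,\eps_k]$ and then on $[\eps_k,T]$. For the reduction, recall from Subsection \ref{ssec:bounded} that we may assume $F,G$ bounded and uniformly continuous with a bounded continuous modulus $\boldsymbol m$. Since $X^{\alpha^k}$ and $X^{\alpha^k,\eps_k}$ are driven by the same control, the control arguments of $F,G$ coincide, so
\begin{equation*}
|J(X_0,\alpha^k)-J^{\eps_k}(X_0,\alpha^k)|
\le \E\!\int_0^T \boldsymbol m\Bigl(\|X^{\alpha^k}_{\cdot\wedge t}-X^{\alpha^k,\eps_k}_{\cdot\wedge t}\|_t+\fm_2(\cL^{\alpha^k}_t,\cL^{\alpha^k,\eps_k}_t)\Bigr)\d t+\E\bigl[\boldsymbol m(R^k_T)\bigr],
\end{equation*}
where $R^k_T$ denotes the analogous argument at $t=T$. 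Conditioning on $\sG$, the canonical coupling $(X^{\alpha^k}_{\cdot\wedge t},X^{\alpha^k,\eps_k}_{\cdot\wedge t})$ yields $\E\,\fm_2(\cL^{\alpha^k}_t,\cL^{\alpha^k,\eps_k}_t)^2\le \E\|X^{\alpha^k}_{\cdot\wedge t}-X^{\alpha^k,\eps_k}_{\cdot\wedge t}\|_t^2$; hence the displayed $\sL^2$-estimate forces the integrands above to $0$ in probability, and boundedness of $\boldsymbol m$ together with dominated convergence gives $|J(X_0,\alpha^k)-J^{\eps_k}(X_0,\alpha^k)|\to0$.

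To prove the $\sL^2$-estimate, put $\Delta^k:=X^{\alpha^k}-X^{\alpha^k,\eps_k}$. On $[0,\eps_k]$ one has $\Delta^k_t=X^{\alpha^k}_t-X_0$, so Burkholder–Davis–Gundy (for the Brownian and, using $\int_E(L(\zeta)^2+K(\zeta)^2)(n^0+n)(\d\zeta)<\infty$, the compensated Poisson integrals, cf.\ \cite{kunita_jumps}) together with the quadratic growth bounds of Assumption \ref{asm:invariance-state} give, for $s\le\eps_k$,
\begin{equation*}
\E\sup_{r\le s}|\Delta^k_r|^2\le C\,\E\!\int_0^s\bigl(1+\|X^{\alpha^k}\|_r^2+|\alpha^k_r|^2+\fs(\mu^{\alpha^k}_r)\bigr)\d r.
\end{equation*}
Because $\|X^{\alpha^k}\|_r\le|X_0|+\sup_{u\le r}|\Delta^k_u|$ and $\E\,\fs(\mu^{\alpha^k}_r)=\E\|X^{\alpha^k}_{\cdot\wedge r}\|_r^2$ for $r\le\eps_k$, Grönwall's inequality closes the loop — legitimately, since $\E\|X^{\alpha^k}\|_T^2<\infty$ by Theorem \ref{thm:state} — and yields
\begin{equation*}
\E\|\Delta^k\|_{\eps_k}^2\le C\bigl(\eps_k(1+\E|X_0|^2)+\|\alpha^k\|_{2,\eps_k}^2\bigr)e^{C\eps_k}\longrightarrow 0,\qquad k\to\infty.
\end{equation*}
On $[\eps_k,T]$ the two processes satisfy the same equation with the same noise and control, so subtracting, applying BDG and the Lipschitz estimates of Assumption \ref{asm:invariance-state}, and bounding $\E\,\fm_2(\mu^{\alpha^k}_s,\mu^{\alpha^k,\eps_k}_s)^2\le\E\sup_{r\le s}|\Delta^k_r|^2$ gives $\E\sup_{r\le t}|\Delta^k_r|^2\le C\,\E\|\Delta^k\|_{\eps_k}^2+C\int_{\eps_k}^t\E\sup_{r\le s}|\Delta^k_r|^2\,\d s$; a final Grönwall application propagates the decay, $\E\|\Delta^k\|_T^2\le Ce^{CT}\,\E\|\Delta^k\|_{\eps_k}^2\to0$.

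The main obstacle is the bookkeeping for the conditional-law dependence of the coefficients: one must verify that the $\fm_2$-distance between the regular conditional distributions $\mu^{\alpha}_s$, $\mu^{\alpha,\eps}_s$ (and likewise $\cL^\alpha_t$, $\cL^{\alpha,\eps}_t$) is controlled by the conditional pathwise $\sL^2$-distance through the canonical coupling, and then to integrate out $\sG$ cleanly. The second delicate point is that the hypothesis controls $\alpha^k$ only on the vanishing interval $[0,\eps_k]$ and provides no uniform-in-$k$ moment bound on $X^{\alpha^k}$ over $[0,T]$; this is precisely why the Grönwall estimate on $[0,\eps_k]$ must be made self-contained by absorbing $\|X^{\alpha^k}\|_r$ into $|X_0|+\|\Delta^k\|_r$ rather than invoking an a priori state bound, and why the reduction step leans on boundedness of $\boldsymbol m$ (not on uniform integrability of the states).
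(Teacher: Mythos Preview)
Your proof is correct and follows essentially the same approach as the paper: a two-stage Gr\"onwall estimate (first on $[0,\eps_k]$, then on $[\eps_k,T]$) to establish $\E\|X^{\alpha^k}-X^{\alpha^k,\eps_k}\|_T^2\to 0$, followed by the reduction to bounded $F,G$ with modulus $\boldsymbol m$ and dominated convergence. You are in fact more explicit than the paper about handling the conditional-law terms via the canonical coupling and about the self-contained closure of the $[0,\eps_k]$ estimate (the paper simply asserts $\E\|X^\alpha-X^{\alpha,\eps}\|_\eps^2\le c_1(\eps+\|\alpha\|_{2,\eps}^2)$ without spelling out the absorption of $\|X^\alpha\|_r$ into $|X_0|+\|\Delta^k\|_r$).
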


\begin{proof}
We first fix $\alpha \in \sA$, $\eps>0$,
and we suppress 
the dependence on $\E|X_0|^2$,
and on the constants in Assumptions \ref{asm:state} and \ref{asm:invariance}.
To simplify the notation, we set
$$
X^k:=X^{\alpha^k},\quad 
Y^k:= X^{\alpha^k,\eps_k},\quad 
\cL^k_t:=\cL^{\alpha^k}_t,\quad
\widehat  \cL^{k}_t:= \cL^{\alpha^k,\epsilon_k}_t.
$$

By Assumption \ref{asm:state}, there is a constant $c_1$
such that,
$$
\E[\|X^{\alpha} -X^{\alpha,\eps}\|^2_\eps]=
\E[\sup_{t\in[0,\eps]}|X^{\alpha}_t -X_0|^2]
\le c_1 (\eps+\|\alpha\|^2_{2,\eps}).
$$
We then use the Burkholder-Davis-Gundy inequality,
\cite[Corollary 2.12]{kunita_jumps}, 
the Lipschitz property of the coefficients 
$(b,\boldsymbol{\sigma},\boldsymbol{\lambda})$, 
and the  Gr\"onwall's inequality to arrive at 
$$
\E[\sup_{t\in[\eps,T]} |X^{\alpha}_t-X^{\alpha,\eps}_t|^2]\
\le\  c_2 \E[|X^{\alpha}_\eps -X^{\alpha,\eps}_\eps|^2] 
=c_2 \E[|X^{\alpha}_\eps -X_0|^2],
$$
for some constant $c_2$.  Hence, there is $c_3$ such that
$$
\E [\|X^{\alpha} -X^{\alpha,\eps}\|^2_T] \le 
\E[\|X^{\alpha} -X^{\alpha,\eps}\|^2_\eps] +
\E[\sup_{t\in[\eps,T]} |X^{\alpha}_t-X^{\alpha,\eps}_t|^2] \le\  c_3 (\eps+\|\alpha\|^2_{2,\eps}).
$$
Since
$\E [\sup_{t\in[0,T]}\fm_2(\cL^{\alpha}_t \, ,\,  \cL^{\alpha,\epsilon}_t )^2] 
\le \E [\|X^{\alpha} -X^{\alpha,\eps}\|^2_T]$ it follows that
$\lim_{k \to \infty}\E[ \zeta_k] =0$, where
$$
\zeta_k:= \|X^k -Y^k\|^2_T
+\sup_{t\in[0,T]} \fm_2(\cL^k_t \, ,\,  \hat \cL^k_t )^2.
$$
Hence, $\zeta_k$ converges to zero in probability.  
Moreover, by the previous subsection,
$$
|F(t, X^{k}_{\cdot\land t}, \alpha^k_t, \cL^{k}_t)
-F(t, Y^{k}_{\cdot\land t}, \alpha^k_t, \widehat  \cL^{k}_t)|+
|G(X^k_{\cdot\land T},\cL^{k}_T) 
- G(Y^k_{\cdot\land T}, \widehat \cL^k_T)|
\le \boldsymbol{m}(\zeta_k),
$$
for every $t \in[0,T]$, where $\boldsymbol{m}$ is the modulus of continuity of $F$ and $G$.
Consequently, the following sequence of random variables
$$
j_k:=\int_0^T [F(t, X^{k}_{\cdot\land t}, \alpha^k_t, \cL^{k}_t)
-F(t, Y^{k}_{\cdot\land t}, \alpha^k_t, \widehat  \cL^{k}_t)]\, \d t 
+ G(X^k_{\cdot\land T},\cL^{k}_T) 
- G(Y^k_{\cdot\land T}, \widehat \cL^k_T),
$$
also converge to zero in probability.
Additionally, 
by the reduction obtained in the previous subsection,
$F$ and $G$ are bounded.  Therefore,
$|j_k|\le
 2 \|F\|_\infty T + 2\|G\|_\infty$.
Then, by  classical arguments we arrive at
$$
\lim_{k \to \infty} |J(X_0,\alpha^k) -
J^{,\epsilon_k}(X_0,\alpha^k)|
=\lim_{k \to \infty} 
\E[| j_k|] =0.
$$
\end{proof}

\subsection{Simple Controls}
\label{ssec:simple_controls}

\begin{Def}
\label{def:simple_conrol}
{\rm{A control process $\alpha \in \sA(\gamma)$ is called}}
simple, {\rm{if 
\begin{equation}
\label{eq:simplealpha}
\alpha_t = a_0 \chi_{[0,t_1]}(t) 
+\sum_{i=1}^{m} \alpha_i \chi_{(t_i,t_{i+1}]}(t), \qquad t \in [0,T],
\end{equation}
for some $t_0=0<t_1=:t_1(\alpha)<.. <t_{m+1} =T$,
$\sF_{t_{i}}^\gamma$-measurable $\alpha_i :\Om^\gamma 
\to U$ and  $a_0\in U$.

Let $\sA_s(\gamma)$ be the set of all simple controls,
and for any $a_0\in U$, $\sA_s(\gamma,a_0)$ is the set of all simple controls that start with $a_0$.}}
\end{Def}

\begin{Lem}
\label{lem:dependence}
Let $\Xi_t^{\gamma,\eps}$ be as in \eqref{eq:Xi}.
For any bases $\gamma, \tilde \gamma$, $\eps>0$, 
$X_0\in \sL^2(\sF^\gamma_0)$, $\alpha \in \sA_s(\gamma)$,
$\tilde X_0 \in \sL^2(\sF^{\tilde \gamma}_0)$, and 
 $\tilde \alpha \in \sA_s(\tilde \gamma)$ with $t_1(\alpha)=t_1(\tilde\alpha)=\eps$,
$$
\cL^{\gamma}(X_0,\alpha,\Xi^{\gamma,\eps}_T)=
 \cL^{\tilde \gamma}(\tilde X_0,\tilde \alpha,\Xi^{\tilde \gamma,\eps}_T) \quad
\Rightarrow \quad
J^{\gamma,\eps}(X_0,\alpha) =
J^{\tilde \gamma,\eps}(\tilde X_0,\tilde \alpha).
$$
\end{Lem}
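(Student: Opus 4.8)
The plan is to prove that the delayed cost $J^{\gamma,\eps}(X_0,\alpha)$ depends on $(X_0,\alpha,\gamma)$ only through the joint law $\bar\P:=\cL^\gamma(X_0,\alpha,\Xi^{\gamma,\eps}_T)$, by transporting the whole construction to a canonical basis built out of $\bar\P$ and invoking strong uniqueness.

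First I would set up the canonical object. Let $\cZ$ be the Polish product space on which $\theta^\gamma:=(X_0,\alpha,\Xi^{\gamma,\eps}_T)$ takes values, recording the simple control $\alpha$ as a c\`{a}dl\`{a}g path; let $\bar\P:=(\theta^\gamma)_\sharp\P^\gamma$, and equip $(\cZ,\bar\P)$ with the completed, right--continuous filtration generated by the coordinate maps $(\bar X_0,\bar\alpha,\bar W^0,\bar W,\bar N^0,\bar N)$ read off from $\Xi^{\gamma,\eps}_T$. The key point is that on $\gamma$ the increments $W^{0,\gamma}_t-W^{0,\gamma}_s$ and $W^\gamma_t-W^\gamma_s$ are independent of $\sigma(X_0,\alpha_{\cdot\wedge s},\Xi^{\gamma,\eps}_s)\subseteq\sF^\gamma_s$, since each piece $\alpha_i$ of the simple control is $\sF^\gamma_{t_i}$--measurable while $W^{0,\gamma},W^\gamma$ are $\F^\gamma$--Brownian motions; hence under $\bar\P$ the coordinates $\bar W^0,\bar W$ remain Brownian motions on their respective domains and $\bar N^0,\bar N$ remain Poisson random measures, with the a.s.\ jump--time disjointness and independence inherited from $\gamma$, so that $\bar\gamma$ is a probability basis supporting the data of $\mathrm{SDE}^\eps$ and $\bar\alpha\in\sA_s(\bar\gamma)$. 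On $\bar\gamma$ I would then solve $\mathrm{SDE}^\eps(\bar X_0,\bar\alpha)$ --- whose inputs, namely $\bar X_0$, $\bar\alpha$, the post-$\eps$ increments of $\bar W$ together with $\bar W^0$, and $\bar N^0,\bar N$, are all coordinates of $\cZ$ --- which by Assumption \ref{asm:invariance-state} has a unique strong solution $\bar X$, hence a conditional--law flow $\bar\mu_s=\cL^{\bar\P}(\bar X_{\cdot\wedge s}\,|\,\bar\sG_s)$ and $\bar\cL_t=\cL^{\bar\P}((\bar X_{\cdot\wedge t},\bar\alpha_t)\,|\,\bar\sG)$, and a number $\bar J$ defined by \eqref{eq:app_cost} with $(\bar X,\bar\alpha,\bar\cL)$ in place of $(X^{\alpha,\eps},\alpha,\cL^{\alpha,\eps})$.

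The heart of the argument is the identity $J^{\gamma,\eps}(X_0,\alpha)=\bar J$; applying it again with $\tilde\gamma$ in place of $\gamma$ --- legitimate because $\bar\gamma$ is built from the \emph{common} law $\bar\P$ --- then yields the lemma. Two facts are needed. First, $\theta^\gamma$ is measure preserving from $(\Om^\gamma,\P^\gamma)$ onto $(\cZ,\bar\P)$ and, up to null sets, $(\theta^\gamma)^{-1}(\bar\sG_s)=\sG^\gamma_s$ and $(\theta^\gamma)^{-1}(\bar\sG)=\sG^\gamma$, because $\Xi^{\gamma,\eps}_T$ carries the full common noise $(W^{0,\gamma},N^{0,\gamma})$ and $\sG^\gamma$ is its completion. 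Second, by strong existence and pathwise uniqueness for $\mathrm{SDE}^\eps$ under Assumption \ref{asm:invariance-state}, its solution admits a measurable functional representation depending on the inputs only through their law: there is a measurable $\Psi_{\bar\P}$ with $X^{\alpha,\eps}_{\cdot\wedge t}=\Psi_{\bar\P}(t,\theta^\gamma)$ a.s.\ under $\P^\gamma$ and $\bar X_{\cdot\wedge t}=\Psi_{\bar\P}(t,\mathrm{id}_\cZ)$ a.s.\ under $\bar\P$. Combining these, and using that regular conditional distributions transport through the measure preserving map $\theta^\gamma$, one gets $\cL^{\alpha,\eps}_t=\bar\cL_t\circ\theta^\gamma$ a.s., so the law of $t\mapsto(X^{\alpha,\eps}_{\cdot\wedge t},\alpha_t,\cL^{\alpha,\eps}_t)$ under $\P^\gamma$ equals the law of $t\mapsto(\bar X_{\cdot\wedge t},\bar\alpha_t,\bar\cL_t)$ under $\bar\P$; integrating the measurable maps $F$ and $G$ then gives $J^{\gamma,\eps}(X_0,\alpha)=\bar J$.

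I expect the main obstacle to be the functional--representation step: showing that the strong solution of the (possibly law--dependent) delayed SDE, together with its conditional--law flow, is genuinely a measurable function of $(X_0,\alpha,\Xi^{\gamma,\eps}_T)$ whose form depends on $(X_0,\alpha,\gamma)$ only through $\bar\P$, and that conditioning on $\sG^\gamma$ transfers correctly through $\theta^\gamma$ even though $\sG^\gamma$ is only the completion of the $\sigma$--algebra generated by the common components of $\Xi^{\gamma,\eps}_T$. In the law-independent setting relevant to Theorem \ref{thm:pot_game_finite} this collapses to the classical measurable dependence of It\^{o} strong solutions on their data and is routine, and the jump integrals add only bookkeeping.
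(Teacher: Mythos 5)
Your proposal is correct and conceptually identical to the paper's (much terser) proof: both rest on the fact that the strong solution of the delayed SDE is a measurable functional of $(X_0,\alpha_1,\ldots,\alpha_m,\Xi^{\gamma,\eps}_T)$, so the joint law of the state, control and conditional-law flow — and hence $J^{\gamma,\eps}$ — is determined by $\cL^{\gamma}(X_0,\alpha,\Xi^{\gamma,\eps}_T)$. You make the argument explicit by routing through a canonical intermediate basis $\bar\gamma$ and a measure-preserving map, and you correctly flag the two delicate points the paper leaves implicit (the Yamada–Watanabe functional representation in the McKean–Vlasov setting and the transfer of the conditional-law flow through the common-noise $\sigma$-algebra), but there is no substantive difference in method.
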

\begin{proof} Consider the measurable space $\sY:= \R^n\times U^m \times \sC_{\ell} \times \sC_d\times M^2$.
Then,  $\cL^{\gamma}(X_0,\alpha,\Xi^{\gamma,\eps}_T)$
is the measure induced by the map,
$\om \in \Om^\gamma \mapsto 
(X_0(\om),\alpha_1(\om), .. , \alpha_m(\om),\Xi^{\gamma,\eps}_T(\om)) \in \sY$.
Since the state process $X^{\alpha,\eps}$ is a function of the
differentials of $(\sW_t,\sN_t)$ for $t \in [\eps,T]$, 
the independent increment property of these processes implies that the
law of $X^{\alpha,\eps}$ depends only on  
$\cL^{\gamma}(X_0,\alpha,\Xi^{\gamma,\eps}_T)$.  Then,
the statement follows directly from the definition of $J^{\gamma,\eps}$.
\end{proof}

\begin{Lem}\label{lem:approx}
    Suppose Assumption \ref{asm:invariance} holds.
    Then, for any basis $\gamma$, $a_0\in U$, initial condition $X_0\in \sL^2(\sF^\gamma_0)$ and control
    $\alpha \in\sA(\gamma)$, there exists 
    $\alpha^{k} \in \sA_s(\gamma,a_0)$ such that with
    $\eps_k:= t_1(\alpha^{k}) $, 
\be
\label{eq:approx}
\lim_{k\to\infty}\eps_k=0,
\qquad  \text{and}\qquad
\lim_{k\to\infty}  J^{\gamma,\eps_k} (X_0,\alpha^{k})  =J^\gamma(X_0,\alpha).
\ee
\end{Lem}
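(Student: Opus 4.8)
The goal is to approximate an arbitrary control $\alpha \in \sA(\gamma)$ by simple controls $\alpha^k$ that start with the fixed value $a_0$ on a vanishing initial interval $[0,\eps_k]$, in such a way that the delayed-start costs $J^{\gamma,\eps_k}(X_0,\alpha^k)$ converge to $J^\gamma(X_0,\alpha)$. I would carry this out in three nested approximation steps, controlling the cost discrepancy at each stage via the $\sL^2$-stability estimates for the state SDE (Assumption~\ref{asm:invariance-state}, via \cite{kunita_jumps}) together with the uniform continuity and boundedness of $F,G$ secured in Section~\ref{ssec:bounded}.

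\textbf{Step 1: Remove the initial value and pass to a delayed start.} Fix $\eps>0$ and replace $\alpha$ on $[0,\eps]$ by the constant $a_0$, keeping $\alpha$ on $(\eps,T]$; call this $\alpha^{(\eps)}$. Since $\alpha \in \sA$ is square-integrable, $\|\alpha\|_{2,\eps} = (\E\int_0^\eps|\alpha_t|^2\,\d t)^{1/2} \to 0$ as $\eps\downarrow0$, and $\|\alpha^{(\eps)}\|_{2,\eps} = |a_0|\sqrt{\eps}\cdot(\text{const}) \to 0$ as well. Lemma~\ref{lem:app_eps} then gives $|J(X_0,\alpha^{(\eps)}) - J^\eps(X_0,\alpha^{(\eps)})| \to 0$; and a routine $\sL^2$-stability estimate for \ref{eq:state} under the perturbation of the control on $[0,\eps]$ (Lipschitz coefficients plus BDG plus Gr\"onwall, exactly as in the proof of Lemma~\ref{lem:app_eps}) gives $\E\|X^\alpha - X^{\alpha^{(\eps)}}\|_T^2 \to 0$, hence $\E\sup_t \fm_2(\cL^\alpha_t, \cL^{\alpha^{(\eps)}}_t)^2 \to 0$, and by uniform continuity and boundedness of the (reduced) $F,G$ also $|J(X_0,\alpha) - J(X_0,\alpha^{(\eps)})| \to 0$. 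Combining, $J^\eps(X_0,\alpha^{(\eps)}) \to J(X_0,\alpha)$ as $\eps\downarrow0$.

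\textbf{Step 2: Approximate $\alpha^{(\eps)}$ on $[\eps,T]$ by piecewise-constant-in-time, $\sF$-measurable simple controls.} On $[\eps,T]$ the control $\alpha^{(\eps)}$ is an arbitrary $\F$-predictable process in $\sL^2$. By a standard density argument (first approximate in $\sL^2(\d\P\otimes\d t)$ by bounded predictable processes, then by left-continuous step processes of the form $\sum_i \alpha_i \chi_{(t_i,t_{i+1}]}$ with $\alpha_i$ being $\sF_{t_i}$-measurable and $U$-valued --- using that $U$ is a Borel subset of Euclidean space and, if needed, a measurable projection onto $U$), one obtains simple controls $\alpha^k \in \sA_s(\gamma,a_0)$ with $t_1(\alpha^k)=\eps$ and $\|\alpha^k - \alpha^{(\eps)}\|_{2,T}\to0$. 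The same $\sL^2$-stability estimate for the delayed-start equation $\mathrm{SDE}^\eps(X_0,\cdot)$ then yields $\E\|X^{\alpha^k,\eps} - X^{\alpha^{(\eps)},\eps}\|_T^2\to0$ and hence, by uniform continuity and boundedness of $F,G$ and dominated convergence, $J^\eps(X_0,\alpha^k) \to J^\eps(X_0,\alpha^{(\eps)})$.

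\textbf{Step 3: Diagonalize.} Steps 1--2 produce, for each $\eps = \eps_j\downarrow0$, simple controls with $J^{\eps_j}(X_0,\alpha^{j,k}) \to J^{\eps_j}(X_0,\alpha^{(\eps_j)})$ as $k\to\infty$, while $J^{\eps_j}(X_0,\alpha^{(\eps_j)}) \to J(X_0,\alpha)$ as $j\to\infty$. A standard diagonal extraction gives a single sequence $\alpha^k \in \sA_s(\gamma,a_0)$ with $\eps_k := t_1(\alpha^k)\to0$ and $J^{\gamma,\eps_k}(X_0,\alpha^k)\to J^\gamma(X_0,\alpha)$, which is \eqref{eq:approx}. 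The main obstacle is bookkeeping the two different approximations simultaneously: one must ensure that the piecewise-in-time approximation in Step 2 does not interfere with the shrinking of the initial interval in Step 1 --- in particular that the first breakpoint $t_1(\alpha^k)$ can be kept equal to $\eps_k$ while the mesh on $[\eps_k,T]$ is refined --- and that all the quantitative $\sL^2$ bounds are uniform enough (the constants from Gr\"onwall and BDG depend only on the Lipschitz/growth constants and $T$, not on the particular control) for the diagonal argument to close. The boundedness of the reduced cost functions, already in force by Section~\ref{ssec:bounded}, is what makes the convergence-in-probability of the integrands upgrade to convergence of expectations without any further integrability input.
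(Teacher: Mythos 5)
Your proposal is correct and follows essentially the same route as the paper's (very terse) proof, which cites \cite[Lemma 2]{McKean_Vlasov_Formulations}: approximate $\alpha$ in $\sL^2(\d\P\otimes\d t)$ by simple controls (the paper invokes \cite[Lemma 4.4]{Liptser_Shiryaev}), use the $\sL^2$-stability estimates already developed in Lemma~\ref{lem:app_eps} to conclude $\E\|X^{\alpha^k,\eps_k}-X^\alpha\|_T^2\to0$, and then upgrade to convergence of costs via the uniform continuity and boundedness of $F,G$ secured by the reduction in Section~\ref{ssec:bounded}. Your three-step decomposition (delay first, then simple approximation, then diagonalize) is a slightly more explicit bookkeeping of the same argument; the paper lumps the delay and the control approximation into a single $\sL^2$ estimate, which is equivalent. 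One minor inaccuracy: $\|\alpha^{(\eps)}\|_{2,\eps}$ is exactly $|a_0|\sqrt\eps$, not a multiple of it, but this is immaterial. Your closing caveat about making the first breakpoint of the Step-2 simple control equal to $\eps$ is precisely the point the paper leaves implicit when it asserts the resulting controls lie in $\sA_s(\gamma,a_0)$; you handle it correctly by defining the approximation only on $[\eps,T]$ and freezing at $a_0$ on $[0,\eps]$.
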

\begin{proof}
    In the absence of jumps,
this result is proved in \cite[Lemma 2]{McKean_Vlasov_Formulations}, and
we follow its proof closely. We first fix $\alpha \in \sA(\gamma)$, and 
construct a sequence of simple controls $\alpha^k$
approximating $\alpha$ in the $\sL^2(\d\P\otimes \d t)$-norm, \cite[Lemma 4.4]{Liptser_Shiryaev}. Then, the same estimates as in Lemma \ref{lem:app_eps} show that $\lim_{k\to\infty}\E[\|X^{\alpha^k,\eps^k}-X^{\alpha}\|_T^2]=0$. Following the proof of the aforementioned lemma, we use the continuity and boundedness of the functions $F,G$ to conclude that the cost functionals converge. 
\end{proof}

\subsection{Strong Controls}
\label{ssec:strong}

\begin{Def}
\label{def:strong_conrol}
{\rm{A control process $\alpha \in \sA(\gamma)$ is called}}
strong, {\rm{if 
$$
\alpha_t (\om)=  \alpha_t(X_0(\om),
\Xi_t^\gamma(\om)), \qquad t \in [0,T],\ \om \in \Om^\gamma,
$$
where $\Xi_t^\gamma$ is defined in \eqref{eq:Xi}.}}
\end{Def}

We let $\sA_{st}(\gamma)$ be the set of all strong controls.
It is clear that there is a one-to-one connection between
$\sA_{st}(\gamma)$ and $\sA_{st}(\gamma^c(\varrho))$.
Indeed, 
\be
\label{eq:s-st}
\forall \alpha\in\sA_{st}(\gamma), \ \ 
\exists \alpha^c \in \sA_{st}(\gamma^c(\varrho)) \ \
\text{such that}\ \ 
J^{\gamma}(X_0,\alpha)=
J^{\gamma^c(\varrho)}(X_0^c,\alpha^c).
\ee

We next construct a strong control
for each simple control on $\gamma$,
by utilizing a classical stochastic 
transfer lemma, see  for instance \cite[Theorem 6.10]{Kallenberg}. 

\begin{Lem}[Stochastic transfer theorem]
\label{lem:transfer}
Suppose that $(S,\cS)$ is a measurable space and $(T,\cT)$ is a Borel space.
Let $Y$ and $Z$ be random variables on a probability space 
$(\Omega,\sF,\P)$ taking values in $S$ and $T$, respectively. 
Then, for any uniform random variable
$V \in [0,1]$ independent of $Y$,
there exists a measurable function $\psi: [0,1] \times S \to T$ 
such that $\cL^\P(Y,\psi(V,Y)) = \cL^\P(Y,Z)$.
\end{Lem}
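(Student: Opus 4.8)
The plan is to build $\psi$ from a regular conditional distribution of $Z$ given $Y$ together with a measurable ``inverse distribution function'', and then to verify the joint law using only the independence of $V$ and $Y$. First I would use that $(T,\cT)$ is a Borel space to fix a regular conditional distribution of $Z$ given $Y$: a probability kernel $\kappa : S \times \cT \to [0,1]$ that is $\cS$-measurable in the first argument and a probability measure on $T$ in the second, with $\P(Z \in B \mid Y) = \kappa(Y,B)$ a.s.\ for every $B \in \cT$; equivalently,
$$
\P(Y \in A,\, Z \in B) = \int_A \kappa(y,B)\, \cL^\P(Y)(\d y), \qquad A \in \cS,\ B \in \cT .
$$

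The core step is to produce a jointly measurable map $f : S \times [0,1] \to T$ such that $f(y,\cdot)_\sharp \mathrm{Leb}_{[0,1]} = \kappa(y,\cdot)$ for every $y \in S$. Since $(T,\cT)$ is a Borel space, fix a Borel isomorphism $\iota : T \to B_0$ onto a Borel subset $B_0 \subseteq [0,1]$ and transport $\kappa$ to a probability kernel $\tilde\kappa(y,\cdot) := \iota_\sharp \kappa(y,\cdot)$ on $[0,1]$. Writing $H(y,t) := \tilde\kappa(y,[0,t])$, which is $\cS$-measurable in $y$ and nondecreasing and right-continuous in $t$, set
$$
g(y,u) := \inf\{\, t \in [0,1] : H(y,t) \ge u \,\}, \qquad (y,u) \in S \times [0,1].
$$
Then $g$ is $\cS \otimes \sB([0,1])$-measurable, and the usual inverse-CDF computation gives $g(y,\cdot)_\sharp \mathrm{Leb}_{[0,1]} = \tilde\kappa(y,\cdot)$ for each fixed $y$; since this measure is carried by $B_0$, we have $g(y,u) \in B_0$ for $\mathrm{Leb}$-a.e.\ $u$, so $f := \iota^{-1} \circ g$, set equal to an arbitrary fixed element of $T$ on the remaining null sections, is measurable and satisfies $f(y,\cdot)_\sharp \mathrm{Leb}_{[0,1]} = \kappa(y,\cdot)$.

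Finally I would set $\psi(v,y) := f(y,v)$, which is $\sB([0,1]) \otimes \cS$-measurable. Since $V$ is uniform on $[0,1]$ and independent of $Y$, the pair $(Y,V)$ has law $\cL^\P(Y) \otimes \mathrm{Leb}_{[0,1]}$, so Fubini's theorem yields, for all $A \in \cS$ and $B \in \cT$,
$$
\P\big(Y \in A,\, \psi(V,Y) \in B\big) = \int_A \mathrm{Leb}_{[0,1]}\big(\{v : f(y,v) \in B\}\big)\, \cL^\P(Y)(\d y) = \int_A \kappa(y,B)\, \cL^\P(Y)(\d y) = \P(Y \in A,\, Z \in B).
$$
As the measurable rectangles $A \times B$ generate $\cS \otimes \cT$ and both sides are finite measures, this gives $\cL^\P(Y, \psi(V,Y)) = \cL^\P(Y, Z)$, as claimed.

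The step I expect to be the main obstacle is the measurable randomization of the kernel, that is, extracting a single function $f(y,u)$ that is jointly measurable and returns the correct conditional law $\kappa(y,\cdot)$ simultaneously for every $y$. This is precisely where the hypothesis that $(T,\cT)$ is a Borel space is used, via the Borel isomorphism theorem and the inverse-CDF construction; the existence of the regular conditional distribution and the concluding Fubini verification are routine.
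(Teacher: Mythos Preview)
Your proof is correct and follows the standard route to this classical result: disintegrate to obtain a regular conditional distribution $\kappa(y,\cdot)$, pass through a Borel isomorphism to reduce to $T\subseteq[0,1]$, build the quantile function $g$ and pull back. The measurability of $g$ via $\{g\le t\}=\{H(\cdot,t)\ge u\}$ and the Fubini verification of the joint law are both fine, and your handling of the null set where $g(y,u)\notin B_0$ is acceptable since that set is jointly measurable.

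By way of comparison, the paper does not actually prove this lemma: it is stated as a classical stochastic transfer result with a reference to Kallenberg's \emph{Foundations of Modern Probability}, Theorem~6.10, and then used as a black box in the subsequent Proposition. So you have supplied a self-contained argument where the paper simply cites the literature; your construction is essentially the one Kallenberg gives, so there is no genuine methodological divergence to discuss.
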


The following is the main technical step of the proof, and it is inspired by \cite[Lemma 3]{McKean_Vlasov_Formulations}.
\begin{Prop}
\label{pro:transfer}
For  $X_0\in \sL^2(\sF_0^\gamma)$, $a_0 \in U$, and 
a simple control $\alpha \in \sA_s(\gamma,a_0)$,
there is a strong control
$\alpha^{st} \in \sA_{st}(\gamma) \cap \sA_s(\gamma,a_0)$ such that
with $\eps=t_1(\alpha)$ the followings hold,
$$
t_1(\alpha^{st})=\eps
\quad {\text{and}}\quad J^{\gamma,\eps}(X_0,\alpha) 
=  J^{\gamma,\eps}(X_0,\alpha^{st}).
$$
\end{Prop}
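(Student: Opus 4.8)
The plan is to combine Lemma \ref{lem:dependence} with a stochastic-transfer recursion over the breakpoints of $\alpha$, the key resource being the \emph{idle} idiosyncratic noise on $[0,\eps]$: the delayed dynamics $\mathrm{SDE}^{\eps}(X_0,\alpha)$ never touch $W^\gamma$ on $[0,\eps]$, so $W^\gamma|_{[0,\eps]}$ is a source of randomness that is a measurable function of $\Xi^\gamma_t$ for every $t\ge\eps$ yet is independent of $(X_0,\Xi^{\gamma,\eps}_T)$. Writing $\alpha$ as in \eqref{eq:simplealpha} with $0=t_0<t_1=\eps<\dots<t_{m+1}=T$ and $\sF^\gamma_{t_i}$-measurable values $\alpha_i$, it suffices by Lemma \ref{lem:dependence} to construct $\sF^\gamma_{t_i}$-measurable $\alpha^{st}_i=\alpha^{st}_i(X_0,\Xi^\gamma_{t_i})$, each with the same law as $\alpha_i$, such that
\[
\cL^\gamma\big(X_0,\Xi^{\gamma,\eps}_T,\alpha^{st}_1,\dots,\alpha^{st}_m\big)=\cL^\gamma\big(X_0,\Xi^{\gamma,\eps}_T,\alpha_1,\dots,\alpha_m\big);
\]
then $\alpha^{st}:=a_0\chi_{[0,\eps]}+\sum_{i=1}^m\alpha^{st}_i\chi_{(t_i,t_{i+1}]}$ lies in $\sA_{st}(\gamma)\cap\sA_s(\gamma,a_0)$ with $t_1(\alpha^{st})=\eps$, the identity passes to the piecewise-constant processes $\alpha,\alpha^{st}$ jointly with $(X_0,\Xi^{\gamma,\eps}_T)$, and Lemma \ref{lem:dependence} gives $J^{\gamma,\eps}(X_0,\alpha^{st})=J^{\gamma,\eps}(X_0,\alpha)$.

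Since the Wiener law of $W^\gamma|_{[0,\eps]}$ is atomless, it yields an i.i.d.\ family $V_1,\dots,V_m$ of uniform-$[0,1]$ variables, independent of $(X_0,\Xi^{\gamma,\eps}_T)$, with each $V_i$ measurable with respect to $\sigma(W^\gamma_s:s\le\eps)\subset\sF^\gamma_\eps$. Let $q_i\big((x_0,\xi,a_{1:i-1}),\cdot\big)$ be a regular conditional distribution of $\alpha_i$ given $(X_0,\Xi^{\gamma,\eps}_{t_i},\alpha_1,\dots,\alpha_{i-1})$ on $\gamma$. Applying Lemma \ref{lem:transfer} on $\Omega^\gamma\times[0,1]$ — the extra coordinate furnishing a uniform independent of everything, with $Y=(X_0,\Xi^{\gamma,\eps}_{t_i},\alpha_{1:i-1})$ and $Z=\alpha_i$ — produces measurable maps $\psi_i$ with $\psi_i(\cdot,y)_\sharp\mathrm{Unif}[0,1]=q_i(y,\cdot)$ for a.e.\ $y$. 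Define recursively $\alpha^{st}_i:=\psi_i\big(V_i,X_0,\Xi^{\gamma,\eps}_{t_i},\alpha^{st}_1,\dots,\alpha^{st}_{i-1}\big)$. Unwinding, $\alpha^{st}_i$ is a measurable function of $(X_0,\Xi^{\gamma,\eps}_{t_i},V_1,\dots,V_i)$, hence of $(X_0,\Xi^\gamma_{t_i})$ since both $\Xi^{\gamma,\eps}_{t_i}$ and $W^\gamma|_{[0,\eps]}$ are functions of $\Xi^\gamma_{t_i}$ (using $\eps\le t_i$); thus $\alpha^{st}_i$ is strong and $\sF^\gamma_{t_i}$-measurable.

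It remains to verify by induction on $i$ that $\cL^\gamma(X_0,\Xi^{\gamma,\eps}_{t_i},\alpha^{st}_{1:i})=\cL^\gamma(X_0,\Xi^{\gamma,\eps}_{t_i},\alpha_{1:i})$. In the inductive step, the increments of $(W^{0,\gamma},W^\gamma,\sN^\gamma)$ over $(t_{i-1},t_i]$ are independent of $\sF^\gamma_{t_{i-1}}$, which contains both $(X_0,\Xi^{\gamma,\eps}_{t_{i-1}},\alpha_{1:i-1})$ and its strong analogue (the latter a function of $(X_0,\Xi^{\gamma,\eps}_{t_{i-1}})$ and the $\sF^\gamma_\eps$-measurable $V_{1:i-1}$, hence $\sF^\gamma_{t_{i-1}}$-measurable); adjoining these increments upgrades the level-$(i-1)$ identity to the analogous one with $\Xi^{\gamma,\eps}_{t_i}$ in place of $\Xi^{\gamma,\eps}_{t_{i-1}}$. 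Since $(V_1,\dots,V_m)$ is i.i.d.\ and independent of $(X_0,\Xi^{\gamma,\eps}_T)$, $V_i$ is independent of $(X_0,\Xi^{\gamma,\eps}_{t_i},V_{1:i-1})$ and so of $(X_0,\Xi^{\gamma,\eps}_{t_i},\alpha^{st}_{1:i-1})$; hence the conditional law of $\alpha^{st}_i$ given that tuple is $q_i$, and composing the upgraded identity with the kernel $q_i$ yields the level-$i$ identity. At $i=m$, one final adjoining of the increments over $(t_m,T]$ gives the required identity with $\Xi^{\gamma,\eps}_T$; in particular $\alpha^{st}_i$ and $\alpha_i$ have the same law, so $\|\alpha^{st}\|_{2,T}=\|\alpha\|_{2,T}<\infty$ and the construction is complete.

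I expect the crux to be conceptual rather than computational: one must recognize that the delayed start is exactly what makes $W^\gamma|_{[0,\eps]}$ at once \emph{available} inside the strong information $\sigma(X_0,\Xi^\gamma_\cdot)$ and \emph{independent} of the noise $\Xi^{\gamma,\eps}$ driving $X^{\alpha,\eps}$ — which is precisely the room needed to randomize an arbitrary $\sF^\gamma$-measurable control into a strong one without altering the delayed-start cost. The remaining effort is bookkeeping: verifying at each breakpoint that $\alpha^{st}_i$ reads only $(X_0,\Xi^\gamma_{t_i})$ and that $V_i$ stays independent of the growing conditioning tuple.
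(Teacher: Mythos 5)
Your proof is correct and follows the same route as the paper: manufacture auxiliary uniform variables from the Brownian noise on $[0,\eps]$ that the delayed dynamics never touch, apply the stochastic transfer lemma breakpoint by breakpoint to build a strong control matching the joint law of $(X_0,\alpha,\Xi^{\gamma,\eps}_T)$, and conclude via Lemma \ref{lem:dependence}. The one point where your version is more careful than the paper's is the application of Lemma \ref{lem:transfer}: the paper's Step~1, for $i\ge 2$, invokes the lemma with $V=\xi_i$ and $Y=(X_0,\alpha_1,\dots,\alpha_{i-1},\Xi_i)$, but since $\alpha_j$ is only $\sF^\gamma_{t_j}$-measurable it may depend on $W^\gamma_\eps$ and hence on $\xi_i$, so the hypothesis $V \indep Y$ is not literally met. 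What the recursion in Step~2 actually uses is only the disintegration property $\psi_i(\cdot,y)_\sharp\mathrm{Unif}[0,1]=q_i(y,\cdot)$ together with the independence of $\xi_i$ from $(X_0,\alpha^{st}_{1:i-1},\Xi_i)$ — which does hold, because the strong controls are functions of $(X_0,\Xi_{i-1},\xi_{1:i-1})$ alone. Your construction obtains precisely the kernel property by applying the transfer lemma with a genuinely independent uniform on an enlargement and only then substituting $V_i$ where the required independence is available; this is the right way to make the argument airtight, and the remainder (extracting $m$ i.i.d.\ uniforms from the idle noise, the increment-independence upgrade from $\Xi_{i-1}$ to $\Xi_i$, and the final appeal to Lemma \ref{lem:dependence}) matches the paper step for step.
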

\begin{proof}
We fix $X_0 \in \sL^2(\sF^\gamma_0)$, $a_0\in U$, and
$\alpha \in\sA_s(\gamma,a_0)$. 
With $\alpha$  as in \eqref{eq:simplealpha}, set $\eps:=t_1(\alpha)$ and
$$
\Xi_i:= \Xi^{\gamma,\eps}_{t_i} \in \sC_\ell \times \sC_d \times M^2 =:\sX,
\qquad i=1,..,m,
$$
where $\Xi^{\gamma,\eps}_t$ is as in  \eqref{eq:Xi}.
Then, $(X_0,\Xi_1, .., \Xi_m)$ is independent of $W^\gamma_\eps$.

It is classical that there is a measurable map $\varphi :\R^d \to [0,1]^m$ such that 
components of $\varphi(W^\gamma_\eps)=:\xi =: (\xi_1,. . ,\xi_m)$ are independent uniform variables on $[0,1]$.
Additionally, $(X_0,\Xi_1, .., \Xi_m)$ and $\xi$ are independent.
\vspace{5pt}

\noindent
{\emph{Step 1.}} By the stochastic transfer lemma with $Y=X_0$, $Z=\alpha_1$, and $V=\xi_1$, there exists a measurable $\psi_1:[0,1]\times\R^n\to U$ such that $\cL(X_0,\psi_1(\xi_1,X_0))=\cL(X_0,\alpha_1)$.
Then, for $i=2,..,m$, we apply the transfer lemma with 
$Y=(X_0,\alpha_1, .., \alpha_{i-1}, \Xi_i)$,
$Z= \alpha_i$, and $V=\xi_i$ to construct a measurable map  
$\psi_i: [0,1] \times \R^n\times U^{i-1}\times \sX \to U$
such that 
$$
\cL(X_0,\alpha_1, .., \alpha_{i-1}, \Xi_i, \psi_i(\xi_i,X_0,\alpha_1, .., \alpha_{i-1}, \Xi_i))=
\cL(X_0,\alpha_1, .., \alpha_{i-1}, \Xi_i, \alpha_i).
$$

\noindent
{\emph{Step 2.}} We set $\alpha^{st}_1:= \psi_1(\xi_1,X_0)$.
Then, recursively we define
$$
\alpha^{st}_i:= \psi_i(\xi_i,X_0,\alpha^{st}_1, .. , \alpha^{st}_{i-1}, \Xi_i), \quad i=2, .. , m.
$$
Using this definition and the independence of increments, one may recursively  show that
$$
\cL(X_0,\alpha_1, .., \alpha_{i}, \Xi_i)=
\cL(X_0,\alpha^{st}_1, .., \alpha^{st}_{i}, \Xi_i),\qquad i=1, . ., m.
$$

\noindent
{\emph{Step 3.}} Since $\Theta^{st}:=(X_0,\alpha^{st}_1, .., \alpha^{st}_{m}, \Xi_m)$ and
$\Theta:=(X_0,\alpha_1, .., \alpha_{m}, \Xi_m)$
are $\sF^\gamma_{t_m}$ measurable, they are both independent of the increment
$\Xi^{\gamma,\eps}_T-\Xi_{m}=\Xi^{\gamma,\eps}_T-\Xi^{\gamma,\eps}_{t_m}$.
Also, by the previous step
$\cL(\Theta^{st})=\cL(\Theta)$, hence $\cL(\Theta^{st},\Xi^{\gamma,\eps}-\Xi_m)=\cL(\Theta,\Xi^{\gamma,\eps}-\Xi_m)$. Therefore,
$$
\cL(X_0,\alpha^{st}_1, .., \alpha^{st}_{m},\Xi_T^{\gamma,\eps})
=\cL(X_0,\alpha_1, .., \alpha_{m}, \Xi_T^{\gamma,\eps}).
$$

\noindent
{\emph{Step 4.}} We define a simple, strong control process by,
$$
\alpha^{st}_t :=  a_0 \chi_{[0,t_1]}(t) 
+\sum_{i=1}^{m} \alpha^{st}_i \chi_{(t_i,t_{i+1}]}(t), \qquad t \in [0,T].
$$
In view of Step 2, $\alpha^{st} \in \sA_{st}(\gamma)
\cap \sA_s(\gamma,a_0)$.  
By the previous step, 
$$
\cL(X_0,\alpha^{st}, \Xi_T^{\gamma,\eps})
=\cL(X_0,\alpha, \Xi_T^{\gamma,\eps}).
$$
Then, by Lemma \ref{lem:dependence},
$J^{\gamma,\eps}(X_0,\alpha)= J^{\gamma,\eps}(X_0,\alpha^{st})$
and $\|\alpha\|_{2,T}=\|\alpha^{st}\|_{2,T}$. 
\end{proof}

\subsection{Proof of Theorem \ref{thm:invariance}}
\label{ssec:fnish}

For $\delta>0$, let $\alpha^\delta \in \sA(\gamma)$ be a $\delta$-optimizer of the 
MFC problem on $\gamma$, i.e.,
$$
v(X_0,\gamma) \ge J^\gamma(X_0,\alpha^\delta)-\delta.  
$$
We fix $a_0 \in U$. By Lemma \ref{lem:approx},
there are $\alpha^{k} \in \sA_s(\gamma,a_0)$  such that with $\eps_k=t_1(\alpha^k) \downarrow 0$,
$$
\lim_{k \to \infty}\  J^{\gamma,\epsilon_k}(X_0,\alpha^{k}) 
=J^{\gamma}(X_0,\alpha^{\delta}). 
$$
By Proposition \ref{pro:transfer}, there are 
$\alpha^{k,st} \in \sA_{st}(\gamma) \cap \sA_s(\gamma,a_0)$  satisfying,
$$
J^{\gamma,\epsilon_k}(X_0,\alpha^{k})=
J^{\gamma,\epsilon_k}(X_0,\alpha^{k,st})
\quad\text{and}\quad
t_1(\alpha^{k,st})=\eps_k.
$$
Since $\alpha^{k,st} \in \sA_s(\gamma,a_0)$,
$\alpha^{k,st}_t=a_0$ for every $t \in [0,\eps_k]$
and $\|\alpha^{k,st}\|^2_{2,\eps_k}= |a_0|^2 \eps_k$.
Hence, by
Lemma \ref{lem:app_eps},
$$
\lim_{k \to \infty}\ |J^{\gamma}(X_0,\alpha^{k,st})-
J^{\gamma,\epsilon_k}(X_0,\alpha^{k,st})| =0. 
$$
By \eqref{eq:s-st}, there are $\alpha^{k,st,c} \in \sA(\gamma^c(\varrho))$
so that
$$
J^{\gamma}(X_0,\alpha^{k,st})
= J^{\gamma^c(\varrho)}(X_0^c,\alpha^{k,st,c}).
$$
Using these we choose $\alpha^{\delta,1} \in \sA_s(\gamma)$,
$\alpha^{\delta,2} \in \sA_{st}(\gamma)$,
$\alpha^{\delta,c} \in \sA(\gamma^c(\varrho))$,  and $\eps_*>0$ satisfying,
\begin{align*}
v(X_0,\gamma) &\ge J^\gamma(X_0,\alpha^\delta)-\delta
\ge J^{\gamma,\eps_*} (X_0,\alpha^{\delta,1}) -2\delta
=J^{\gamma,\epsilon_*}(X_0,\alpha^{\delta,2}) -2 \delta\\
&\ge J^{\gamma}(X_0,\alpha^{\delta,2})-3\delta
= J^{\gamma^c(\varrho)}(X_0^c,\alpha^{\delta,c}) - 3\delta
\ge v(X_0^c,\gamma^c(\varrho)) -3\delta.
\end{align*}
As $\delta>0$ is arbitrary, we conclude that
$v(X_0,\gamma)  \ge v(X_0^c,\gamma^c(\varrho))$.
This is exactly \eqref{eq:sufficient} which, as already argued earlier,
implies the invariance principle.

\hfill $\Box$

\appendix

\noindent

\section{Poisson Point Processes} 
\label{app:point}

In this subsection, for the convenience
of the reader we recall standard notations
and terminology from  the classical textbook
of Ikeda \& Watanabe  \cite{Ikeda_Watanabe}
on stochastic processes. See also \cite{barczy2015yamada} for details 
on stochastic differential equations with jumps and the construction of canonical spaces for random measures.

Let $(E,\sE) = (\R^k\setminus\{0\},\sB(\R^k\setminus\{0\}))$, and  
a \emph{counting measure} on $[0,\infty)\times E$
has the form
$$
\pi=\sum_{i\in\cI}\delta_{\{(t_i,p_i)\}},
$$
for some  countable collection (not necessarily distinct nor ordered)
 $(t_i,p_i)_{i\in\cI}\subset [0,\infty)\times E$.
Let $M=M([0,\infty)\times E)$ be the set of all $\{0,1,2,..\}\cup\{\infty\}$-valued counting measures
$\pi$ on $[0,\infty)\times E$ that satisfy $\pi(\{0\}\times E)=0$, $\pi(\{t\}\times E)\leq1$ for all $t>0$ and $\pi([0,T]\times S)<\infty$ for any compact set $S\subset E$. 
Further let $\sM=\sM([0,\infty)\times E)$ denote the smallest $\sigma$-algebra such that 
for each $t\geq0$, $S\in\sE$, the map 
$\pi \in M \ \mapsto \ \pi((0,t]\times S)$ is measurable.

A \emph{($\sigma$-finite) integer-valued random measure} is any 
$(M,\sM)$-valued random variable. A \emph{Poisson random measure} 
on $[0,\infty)\times E$ is a random measure that satisfies
\begin{enumerate}[(i)]
    \item For $B\in\sB[0,\infty)\otimes\sE$, $\pi(B)$ is Poisson 
    with the \emph{intensity measure}  $\lambda(B)=\E[\pi(B)]$;
    \item For disjoint sets $B_1,..,B_n\in\sB[0,\infty)\otimes\sE$, 
    the random variables $\pi(B_1),..,\pi(B_n)$ are mutually independent.
\end{enumerate}

A function $p:D(p)\subset(0,\infty)\to E$ is called a \emph{point function on $E$}, 
if its \emph{domain} $D(p)$ is countable and 
$\{s\in D(p)\cap(0,t] : p(s)\in S\}$ finite for every $t>0$ and 
a compact $S\subset E$. For every point function on $E$
we associate a counting measure $N_p$ on $[0,\infty)\times E$ by, 
\begin{equation*}
    N_p(\d t,\d \zeta) = \sum_{s\in D(p)} \delta_{\{(s,p(s))\}}(\d t,\d \zeta).
\end{equation*}
Note that $N_p(\{0\}\times E)=0$. 
We equip the space of all point functions with the $\sigma$-algebra 
which is generated by the family of maps $p\mapsto N_p((0,t]\times S)\in[0,\infty]$, 
for $t>0$, $S\in\sE$. 

A \emph{point process} is a randomized point function $p(\omega,t)$, 
measurable in the sense introduced before, and it
is called a \emph{Poisson point process} if the associated random counting measure $N_p$ is 
a Poisson random measure. Finally, it is called \emph{stationary}, 
if the intensity measure of $N_p$ is given by the product of Lebesgue measure with some 
$\sigma$-finite measure $n(\d\zeta)$ on $(E,\sE)$:
\begin{equation*}
    \hat N_p(\d t, \d\zeta) = \d t \otimes n(\d \zeta).
\end{equation*}
The deterministic measure $\hat N_p$ is  the \emph{compensator} or \emph{dual predictable projection} 
of the Poisson random measure $N_p$. The random measure 
$\wt N_p := N_p - \hat N_p$ is called the \emph{compensated Poisson random measure}. 
With these definitions, $\sigma$-finite integer-valued random measures are in one-to-one correspondence with point processes.

On a filtered probability space $(\Omega,\sF,\F,\P)$,
a Poisson point process $p=p(\omega,t)$ on $\Omega$ is called an
\emph{$\F$-Poisson point process} if 
for every $t, h>0$, and $S\in\sE$,
$N_p((0,t]\times S)$ is $\sF_t$-measurable,
and the increments $(N_p((0,t+h]\times S)-N_p((0,t]\times S))$ 
are independent of $\sF_t$.
For such processes, the stochastic integral 
\begin{equation*}
    \int_0^t\int_E f(\omega,s,\zeta)\, \wt N_{p(\omega)}(\d s,\d \zeta)
\end{equation*}
is defined for any real function $f$ on $\Omega\times[0,\infty)\times E$ 
that is $\sP\otimes\sE$-measurable, where $\sP$ is the predictable $\sigma$-field on $\Omega\times[0,\infty)$, and which satisfies
\begin{equation*}
    \P\left(\int_0^t\int_E |f(\omega,s,\zeta)|^2\, n(\d\zeta)\,\d s<\infty \right)=1,\quad t>0.
\end{equation*}
Moreover, the identity
\begin{equation*}
    \int_0^t\int_E f(s,\zeta)\, \wt N_p(\d s,\d \zeta) = \int_0^t\int_E f(s,\zeta)\, N_p(\d s,\d \zeta) - 
 \int_0^t\int_E f(s,\zeta)\, n(\d\zeta)\,\d s,
\end{equation*}
with the right side being standard Lebesgue integrals, only holds when the terms on the right side separately converge. 
In general, the left side is defined as an (localized) $\sL^2$-limit. 
For details we refer to Chapter II.3 \cite{Ikeda_Watanabe} and \cite{kunita_jumps} 
which present a more general integration theory with respect to random compensators.

\section{Discounted Infinite Horizon} 
\label{sec:dih}

We consider the discounted infinite horizon problems 
with a  fixed discount factor $\beta>0$.

\begin{Def}[$\beta$-admissible controls] \label{def:adm_controls_beta}
    {\rm{For a probability basis $\gamma$, 
    the set of}} $\beta$-admissible controls $\sA_\beta(\gamma)$ 
    {\rm{is the set of all $\F^\gamma$-progressively measurable processes 
    $\alpha:\Omega^\gamma\times [0,\infty)\to U$ satisfying
    \begin{equation}\label{eq:beta-controls}
        \E^{\gamma} \int_0^\infty e^{-\beta t} [\|X^\alpha\|^2_t + |\alpha_t|^2]\,\d t< \infty,
    \end{equation}
    for any initial condition $X_0\in \sL^2(\sF^\gamma_0)$. 
    Here $X^\alpha$ is the unique strong solution to \ref{eq:state}}}.
\end{Def}

The discounted infinite horizon MFC problem is to minimize
\begin{equation}\label{eq:MFC_beta}
    J_{\beta}^\gamma(X_0,\alpha):=\E^{\gamma}
    \int_0^\infty e^{-\beta t} 
    F(t,X^{\alpha}_{\cdot\land t},\alpha_t,\cL^\alpha_t)\,\d t,
\end{equation}
over all $\alpha \in \sA_\beta(\gamma)$, where as before 
$\cL_t^\alpha:=\cL^{\gamma}((X^\alpha_{\cdot\land t},\alpha_t)\,|\, \sG^\gamma)$. 

For a given $\varrho \in \sP_2(\R^n)$, 
$\cI_\beta^\gamma(\varrho)$ is the
set of all $\sF_0^\gamma$-measurable  $X_0$ such that
$\cL^{\gamma}(X_0)=\varrho$, and $\Gamma_\beta(\varrho)$ is the 
set of all $\gamma$ such that both $\sA_\beta(\gamma)$ and $\cI_\beta(\varrho)$ are non-empty.

\begin{Rmk}
{\rm{In many applications, the controlled diffusion follows 
$\d X^\alpha_t = \alpha_t\,\d t + \sigma\,\d W_t$. 
In this case, $\alpha \equiv 0$ always belong to 
$\sA_\beta(\gamma)$ and the additional 
requirement about the existence of a control
is always satisfied.}}
\end{Rmk}

The following result is obtained
by following the proof of Theorem \ref{thm:invariance}
\emph{mutatis mutandis}.

\begin{Thm}\label{thm:invariance_beta}
Suppose that  Assumptions \ref{asm:state}, \ref{asm:invariance} hold.  
Then, for any $\varrho\in\sP_2(\R^n)$,
$$
v_\beta(X_0,\gamma):=\inf_{\alpha\in\sA_\beta(\gamma)} \,\, 
J_\beta^{\gamma}(X_0,\alpha)=
\inf_{\gamma \in \Gamma_\beta(\varrho)} 
\inf_{X_0 \in \cI_\beta^\gamma(\varrho)} \inf_{\alpha \in \sA_\beta(\gamma)}
J_{\beta}^\gamma(X_0,\alpha)=:V_\beta(\varrho),
$$
for all $\gamma\in\Gamma_\beta(\varrho)$ and $X_0\in\cI_\beta^\gamma(\varrho)$.
\end{Thm}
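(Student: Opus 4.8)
The plan is to repeat, \emph{mutatis mutandis}, the proof of Theorem~\ref{thm:invariance}, the single new feature being the infinite time horizon, which will be absorbed by the discount factor $e^{-\beta t}$ together with the integrability \eqref{eq:beta-controls} built into $\beta$-admissibility. Fix $\varrho\in\sP_2(\R^n)$ and introduce the canonical basis $\gamma^c(\varrho)$ exactly as in Section~\ref{sec:invariance}, except that $\sC_\ell,\sC_d$ are now the spaces of continuous functions on $[0,\infty)$ and $M$ the counting measures on $[0,\infty)\times E$; write $v_\beta(X_0^c,\gamma^c(\varrho))$ for the corresponding value. As in the finite-horizon case it suffices to prove, for every $\gamma\in\Gamma_\beta(\varrho)$ and $X_0\in\cI_\beta^\gamma(\varrho)$, the two inequalities $v_\beta(X_0,\gamma)\le v_\beta(X_0^c,\gamma^c(\varrho))$ and $v_\beta(X_0,\gamma)\ge v_\beta(X_0^c,\gamma^c(\varrho))$; since the common value depends only on $\varrho$, it must then coincide with $V_\beta(\varrho)$. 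The first inequality is the transfer argument of Lemma~\ref{lem:canonical}: any $\alpha^c\in\sA_\beta(\gamma^c(\varrho))$ is a measurable functional of $(X_0^c,\sW^c,\sN^c)$, and composing it with $(X_0,\sW^\gamma,\sN^\gamma)$ yields $\alpha^\gamma$ with $\cL^\gamma(X_0,\sW^\gamma,\sN^\gamma,\alpha^\gamma)=\cL^{\P^c}(X_0^c,\sW^c,\sN^c,\alpha^c)$, hence the same discounted cost; $\beta$-admissibility is preserved because \eqref{eq:beta-controls} is a condition on this joint law and $\cL^\gamma(X^{\alpha^\gamma})=\cL^{\P^c}(X^{\alpha^c})$ by strong uniqueness.

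For the reverse inequality I would put a time-truncation layer on top of the finite-horizon scheme. For $T<\infty$ set $J_\beta^{\gamma,T}(X_0,\alpha):=\E^\gamma\int_0^T e^{-\beta t}F(t,X^\alpha_{\cdot\wedge t},\alpha_t,\cL^\alpha_t)\,\d t$ and $v_\beta^T(X_0,\gamma):=\inf_{\alpha\in\sA_\beta(\gamma)}J_\beta^{\gamma,T}(X_0,\alpha)$. The quadratic growth in Assumption~\ref{asm:invariance} bounds the integrand of $J_\beta^\gamma$ by $Ce^{-\beta t}(1+\|X^\alpha\|_t^2+|\alpha_t|^2+\fs(\cL^\alpha_t))$, which is integrable on $[0,\infty)$ for any $\beta$-admissible $\alpha$ — using \eqref{eq:beta-controls}, the moment estimate of Theorem~\ref{thm:state}, and Jensen's inequality for the conditional law to handle $\fs(\cL^\alpha_t)$ — so dominated convergence gives $J_\beta^{\gamma,T}(X_0,\alpha)\to J_\beta^\gamma(X_0,\alpha)$ as $T\to\infty$. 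Combining this with a routine $\delta$-optimizer comparison applied on both sides upgrades it to $v_\beta^T(X_0,\gamma)\to v_\beta(X_0,\gamma)$.

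Now fix $T$. Then $v_\beta^T$ is precisely a finite-horizon MFC value function with running cost $\tilde F(t,x,a,\nu):=e^{-\beta t}F(t,x,a,\nu)$, zero terminal cost, and dynamics \ref{eq:state}; and $\tilde F$ inherits measurability and quadratic growth from $F$, while on the compact interval $[0,T]$ it inherits the local uniform boundedness and uniform continuity of Assumption~\ref{asm:invariance} (the bounded factor $e^{-\beta t}\le1$ is harmless on a compact interval, only rescaling the constants and moduli). Hence the entire chain behind Theorem~\ref{thm:invariance} — reduction to bounded cost (Section~\ref{ssec:bounded}), the delayed-start device of Section~\ref{ssec:delay}, approximation by simple controls (Lemma~\ref{lem:approx}), and the passage to strong controls via the stochastic transfer Lemma~\ref{lem:transfer} and Proposition~\ref{pro:transfer} (Section~\ref{ssec:strong}) — applies with only notational changes and yields $v_\beta^T(X_0,\gamma)=v_\beta^T(X_0^c,\gamma^c(\varrho))$. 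Letting $T\to\infty$ on both sides gives $v_\beta(X_0,\gamma)=v_\beta(X_0^c,\gamma^c(\varrho))$, and the theorem follows. One minor bookkeeping point: a control defined only on $[0,T]$ is extended to an element of $\sA_\beta(\gamma)$ by pasting it with any fixed $\beta$-admissible control, which exists because $\gamma\in\Gamma_\beta(\varrho)$, and this does not alter $J_\beta^{\gamma,T}$.

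The step I expect to be the main obstacle is making the tail uniformly negligible along the \emph{sequences} of near-optimal controls produced inside the argument, not merely along one fixed control: the simple-control and delayed-start constructions live on compact intervals, so one must control $\int_T^\infty e^{-\beta t}|F|\,\d t$ uniformly over the near-optimizers appearing in Section~\ref{ssec:fnish}. This is exactly where the linear growth of $(b,\boldsymbol{\sigma},\boldsymbol{\lambda})$ in Assumption~\ref{asm:state} and the bound \eqref{eq:beta-controls} combine into a family-wise integrable dominating function; with that in hand, the limits $m\to\infty$ (magnitude truncation of $F$), $c\to\infty$, $\ell\to\infty$ (control bound), $T\to\infty$, and $\delta\downarrow0$ can be taken in the same order as in the finite-horizon proof.
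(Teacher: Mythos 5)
Your proposal takes the natural route of time truncation, which is a reasonable way to reduce to the finite-horizon case, and your treatment of the direction $v_\beta(X_0,\gamma)\le v_\beta(X_0^c,\gamma^c(\varrho))$ via the transfer argument of Lemma~\ref{lem:canonical} is fine. But there is a genuine gap in the $T\to\infty$ step, and your own diagnosis of the obstacle does not correctly locate (or resolve) it. The problem is \emph{not} the tail over near-optimizers of a fixed $v_\beta^T$ (those are only penalized on $[0,T]$, so no tail appears there); the problem is the claimed convergence $v_\beta^T\to v_\beta$. The $\limsup$ direction follows from a $\delta$-optimizer of $v_\beta$ plus dominated convergence. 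But the $\liminf$ direction $\liminf_T v_\beta^T\ge v_\beta$ requires controlling $\int_T^\infty e^{-\beta t}|F|\,\d t$ uniformly over $T$-dependent near-optimizers $\alpha^T$ of $v_\beta^T$, and the bound \eqref{eq:beta-controls} is a \emph{per-control} integrability statement, not a uniform one, so it does \emph{not} supply a family-wise dominating function. As written, your plan places the truncation $T\to\infty$ outermost (``Now fix $T$ \dots the entire chain \dots applies \dots Letting $T\to\infty$''), which leaves $F$ unbounded at the stage where uniformity is needed, and the last sentence's listed order of limits quietly contradicts this placement.

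The fix is to invert the order: carry out the reduction to bounded costs (Section~\ref{ssec:bounded}) on the \emph{infinite horizon first}, producing $F_c^m$ with $|F_c^m|\le\max(c,m)$; only then introduce the time truncation. With $F_c^m$ bounded one has the control-uniform estimate $\bigl|v_{\beta,c,m}^{\ell,T}-v_{\beta,c,m}^{\ell}\bigr|\le\max(c,m)\,e^{-\beta T}/\beta$, so the finite-horizon invariance $v_{\beta,c,m}^{\ell,T}(X_0,\gamma)=v_{\beta,c,m}^{\ell,T}(X_0^c,\gamma^c(\varrho))$ passes to $T\to\infty$, and then $m,c,\ell\to\infty$ as in Section~\ref{ssec:bounded}. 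Equivalently — and this is what the paper's ``\emph{mutatis mutandis}'' most plausibly refers to — one can avoid the extra truncation layer entirely by running Sections~\ref{ssec:delay}--\ref{ssec:strong} directly on $[0,\infty)$: simple controls retain finitely many switching times with a constant tail, the transfer construction of Proposition~\ref{pro:transfer} only ever acts on finitely many $\alpha_i$'s so the constant tail poses no issue, the approximation in Lemma~\ref{lem:approx} is done in the discounted $\sL^2(e^{-\beta t}\,\d\P\otimes\d t)$ norm, and the discount factor supplies the integrability that $[0,T]$-compactness provided before. Either way, the step you must repair is the placement of the cost truncation relative to the horizon truncation; as proposed, the $\liminf$ half of $v_\beta^T\to v_\beta$ is not established.
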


We continue by defining the MFG in this context.
Let $f$ be as in \eqref{eq:pot-game-cost}, and set
$$
J_{g,\beta}^\gamma(X_0,\alpha,\nu^*):=
\E^{\gamma} \int_0^\infty e^{-\beta t} 
f(t,X^{\alpha}_{\cdot\land t},\alpha_t,\nu^*_t)\,\d t,
$$

\begin{Def}\label{def:MFG_beta_equilibrium}
    {\rm{For a basis  $\gamma$ and an initial condition $X_0\in \sL^2(\sF^\gamma_0)$,
    a pair $(\alpha^*,\nu^*)$ is an}} 
    discounted MFG of controls Nash equilibrium on $\gamma$ {\rm{if
    the followings hold:
    \begin{enumerate}[(i)]
        \item $\alpha^* \in \mathcal{A}_\beta(\gamma)$ satisfies
        $ J_{g,\beta}^\gamma(X_0,\alpha^*,\nu^*) = \inf_{\alpha \in \sA_\beta(\gamma)} 
        J_{g,\beta}^\gamma(X_0,\alpha,\nu^*)$,
        \item  $\nu^*_t = \cL^{\gamma}((X^{\alpha^*}_{\cdot\land t}, \alpha^*_t) \,|\, \sG^\gamma\, )$ 
        $\Prob^\gamma$-a.s.\ for all $t\geq0$.
    \end{enumerate}}}
\end{Def}

A straightforward modification of the proof of Theorem 
\ref{thm:pot_game_finite},  using the integrability condition 
\eqref{eq:beta-controls} and Theorem \ref{thm:invariance_beta}, shows the following.

\begin{Thm}\label{thm:pot_game_beta}
    Suppose that Assumptions \ref{asm:state}, \ref{asm:cost} hold, and the MFC problem is invariant under probability bases as in Theorem \ref{thm:invariance_beta}. Let $\gamma$ be a probabilistic basis with non-empty
    $\sA_\beta(\gamma)$ and $X_0\in \sL^2(\sF^\gamma_0)$.
    If $\alpha^*\in\sA_{\beta}(\gamma)$ is a
    minimizer of the MFC problem \eqref{eq:MFC_beta}, then 
    $(\alpha^*, \nu^*)$ is a discounted MFG of controls 
    Nash equilibrium for the potential game on the basis $\gamma$,
    where  $\nu^*_t := \cL^\gamma((X^{\alpha^*}_{\cdot\land t},
    \alpha^*_t)\,|\, \sG^\gamma\, )$.
\end{Thm}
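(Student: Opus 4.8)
The plan is to transcribe the proof of Theorem~\ref{thm:pot_game_finite} from Section~\ref{sec:proof} \emph{mutatis mutandis}, replacing the finite-horizon functionals by their discounted analogues and invoking Theorem~\ref{thm:invariance_beta} in place of the finite-horizon invariance. First I would extend $\gamma$ to $\hat\gamma$ by adjoining an independent fair coin on $\{0,1\}$ and, for $\delta\in(0,1)$, form the tilted bases $\gamma^\delta$ exactly as in \eqref{eq:pdelta}. Fixing a minimizer $\alpha^*\in\sA_\beta(\gamma)$ of \eqref{eq:MFC_beta} and an arbitrary competitor $\alpha\in\sA_\beta(\gamma)$, I set $\theta:=\alpha\otimes\alpha^*$ and $Y_0:=\hat X_0=X_0\otimes X_0$, so that $\theta^{(0)}=\alpha$, $\theta^{(1)}=\alpha^*$. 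Since $Y^\theta=X^\alpha\otimes X^{\alpha^*}$ on $\gamma^\delta$ (which follows from strong uniqueness of \eqref{eq:state}, the coefficients being insensitive to the coin), the integrability constraint \eqref{eq:beta-controls} is inherited, whence $\theta\in\sA_\beta(\gamma^\delta)$, and the exact analogue of Lemma~\ref{lem:additive} gives $\nu^{\theta,\delta}_t=\delta\,\nu^\alpha_t+(1-\delta)\,\nu^*_t$, where $\nu^\alpha_t:=\cL((X^\alpha_{\cdot\land t},\alpha_t)\,|\,\sG)$ and $\nu^*_t:=\cL((X^{\alpha^*}_{\cdot\land t},\alpha^*_t)\,|\,\sG)$.

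Writing $J_{\beta,\delta}$ for the discounted MFC cost on $\gamma^\delta$ and $\cJ_\beta(X_0,\alpha,\eta):=\E\int_0^\infty e^{-\beta t}F(t,X^\alpha_{\cdot\land t},\alpha_t,\eta_t)\,\d t$, the tilting identity \eqref{eq:pdelta} yields $J_{\beta,\delta}(\hat X_0,\theta)=\delta\,\cJ_\beta(X_0,\alpha,\nu^{\theta,\delta})+(1-\delta)\,\cJ_\beta(X_0,\alpha^*,\nu^{\theta,\delta})$, and since $\cL^{\P^\delta}(\hat X_0)=\cL^\P(X_0)$, Theorem~\ref{thm:invariance_beta} gives $J_\beta(X_0,\alpha^*)\le J_{\beta,\delta}(\hat X_0,\theta)$. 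I would then compute the right derivative at $\delta=0$ exactly as in Lemma~\ref{lem:pot_game_finite}: decomposing $\tfrac1\delta\bigl(J_{\beta,\delta}(\hat X_0,\theta)-J_\beta(X_0,\alpha^*)\bigr)=\cI_{1,\delta}+\cI_{2,\delta}$, the term $\cI_{1,\delta}=\cJ_\beta(X_0,\alpha,\nu^{\theta,\delta})-\cJ_\beta(X_0,\alpha^*,\nu^{\theta,\delta})$ converges to $\cJ_\beta(X_0,\alpha,\nu^*)-\cJ_\beta(X_0,\alpha^*,\nu^*)$ by continuity, while $\cI_{2,\delta}$ converges, via the linear differentiability of $F$ in its measure argument and then the same conditional-expectation manipulations used in Section~\ref{sec:proof} (conditioning on $\sG$, interchanging the two $\nu^*_t$-integrations, and reinserting $(X^{\alpha^*}_{\cdot\land t},\alpha^*_t)$ resp. $(X^\alpha_{\cdot\land t},\alpha_t)$), to the $e^{-\beta t}$-weighted integral of the correction terms in the definition of $f$ in \eqref{eq:pot-game-cost}. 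This gives $0\le\lim_{\delta\downarrow0}\tfrac1\delta\bigl(J_{\beta,\delta}(\hat X_0,\theta)-J_\beta(X_0,\alpha^*)\bigr)=J_{g,\beta}(X_0,\alpha,\nu^*)-J_{g,\beta}(X_0,\alpha^*,\nu^*)$; as $\alpha$ is arbitrary, $\alpha^*$ minimizes $J_{g,\beta}(X_0,\cdot,\nu^*)$, and since $\nu^*_t=\cL^\gamma((X^{\alpha^*}_{\cdot\land t},\alpha^*_t)\,|\,\sG^\gamma)$ by construction, $(\alpha^*,\nu^*)$ is a discounted MFG of controls Nash equilibrium.

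The main obstacle I anticipate is the passage to the limit under the now-infinite time integral: in the finite-horizon proof the dominated-convergence and Fubini steps are immediate from the quadratic growth in Assumption~\ref{asm:cost} and $\E[\sup_{s\le t}|X^\alpha_s|^2]<\infty$, whereas here I must instead produce, uniformly in $\delta\in(0,1)$, an $e^{-\beta t}$-integrable envelope for the integrands appearing in $\cI_{1,\delta}$ and $\cI_{2,\delta}$. This follows from the admissibility bound \eqref{eq:beta-controls} applied to $\alpha$ and to $\alpha^*$, together with $\fs(\nu^{\theta,\delta}_t)=\delta\,\fs(\nu^\alpha_t)+(1-\delta)\,\fs(\nu^*_t)$ and the tower identity $\E[\fs(\nu^\alpha_t)]=\E[\|X^\alpha\|_t^2+|\alpha_t|^2]$, which together show that $e^{-\beta t}\,\E\bigl[1+\|X^\alpha\|_t^2+|\alpha_t|^2+\fs(\nu^{\theta,\delta}_t)\bigr]$ is integrable on $[0,\infty)$ uniformly in $\delta$; the quadratic growth of $F$ and $\pmu F$ then dominates the relevant difference quotients, legitimizing both the interchange of $\lim_{\delta\downarrow0}$ with $\int_0^\infty\d t$ and every use of Fubini's theorem. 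Once this uniform control is in place, the remainder is a verbatim transcription of Section~\ref{sec:proof} with the obvious notational changes and with the terminal-cost terms $G,g$ absent.
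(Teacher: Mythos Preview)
Your proposal is correct and follows exactly the route the paper itself indicates: the paper's proof of Theorem~\ref{thm:pot_game_beta} is a single sentence stating that a straightforward modification of the proof of Theorem~\ref{thm:pot_game_finite}, using the integrability condition \eqref{eq:beta-controls} and Theorem~\ref{thm:invariance_beta}, yields the result. Your anticipation and resolution of the infinite-horizon dominated-convergence issue via \eqref{eq:beta-controls} is precisely the point the paper is flagging with that reference.
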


\bibliographystyle{abbrvnat}
{\small
\bibliography{potential_games.bib}}
\end{document}